\newcommand{\p}{\partial}
\newcommand{\R}{{\mathbb R}}
\newcommand{\F}{{\mathbb F}}
\newcommand{\Z}{{\mathbb Z}}
\newcommand{\N}{{\mathbb N}}
\newcommand{\cL}{{\mathcal L}}
\newcommand{\supp}{\mathop{\rm supp}\nolimits}
\newcommand{\de}{\delta}
\newcommand{\dtx}{d_{\sigma_t(x)}}
\newcommand{\dtz}{
 dz\!\mid_{\Sigma_t}}
\newcommand{\NN}{{\mathbb N \cup \{0\}}}
\newcommand{\II}{\mathcal I}
\newcommand{\lappr}{\lesssim}
\newcommand{\dd}{d_1}
\def\12{\tfrac12}
\def\lan{\langle}
\def\ran{\rangle}
\def\eps{\varepsilon}
\theoremstyle{plain}
\newtheorem{theorem}{Theorem}[section]
\newtheorem{lemma}[theorem]{Lemma}
\newtheorem{amplification}[theorem]{Amplification}
\newtheorem{proposition}[theorem]{Proposition}
\newtheorem{corollary}[theorem]{Corollary}
\theoremstyle{definition}
\theoremstyle{remark}
\newtheorem{example}[theorem]{Example}
\newcommand{\be}{\begin{equation}}
\newcommand{\ee}{\end{equation}}
\newcommand{\lbl}{\label}
\newcommand{\non}{\nonumber}
\newcommand{\qu}{\quad}
\newcommand{\fr}{\frac}
\newcommand{\x}{\mathbf  x}
\newcommand{\y}{\mathbf  y}
\newcommand{\z}{\mathbf z}
\newcommand{\vu}{\mathbf u}
\newcommand{\vv}{\mathbf v}
\newcommand{\vb}{\mathbf b}
\newcommand{\vc}{\mathbf  c}
\newcommand{\vw}{\mathbf  w}
\newcommand{\va}{\mathbf  a}
\newcommand{\vl}{\mathbf  l}
\newcommand{\ds}{\displaystyle}
\title{Some remarks  on Heath-Brown's theorem on quadratic forms}
\author{Andrey  Dymov}
\address{Andrey Dymov \\ Steklov Mathematical Institute of RAS, Moscow 119991, Russia 
	\& National Research University Higher School of
	Economics, Moscow 119048, Russia} \email{dymov@mi-ras.ru}
\author{Sergei Kuksin}
\address{Sergei Kuksin \\ Universit\'e Paris-Diderot (Paris 7), UFR de Math\'ematiques - Batiment Sophie Germain, 5 rue Thomas Mann, 75205 Paris,
	France  \& School of Mathematics, Shandong University, Jinan, PRC }
\email{ Sergei.Kuksin@imj-prg.fr}
\author{ Alberto Maiocchi }
\address{Alberto Maiocchi \\ Universit\`a degli Studi di Padova,
  Dipartimento di Matematica, Padova, Italy} \email{alberto.maiocchi@unipd.it}
\author{Sergei  Vl\u adu\c t}
\address{Sergei  Vl\u adu\c t \\ Aix Marseille Universit\'e, CNRS, Centrale Marseille, I2M UMR 7373, 13453, Marseille,
France and IITP RAS, 19 B. Karetnyi, Moscow, Russia} \email{serge.vladuts@univ-amu.fr}
\begin{document}

\maketitle
\begin{abstract}
In his paper from 1996 on quadratic forms Heath-Brown developed a version of circle method to count points in the intersection of an unbounded quadric with a 
lattice of short period, if each point is given a weight. The weight
function is assumed to be $C_0^\infty$--smooth and to vanish near the
singularity of the  
quadric. In out work we allow the weight function to be finitely
smooth and not vanish near the singularity, and we  give also an
explicit dependence on the weight function.

\end{abstract}
\section{Introduction}

\subsection{Setting and result} \label{s_1.1}

Let us consider a non degenerate and  non sign-definite   quadratic form on $\R^d$,
\be\label{FA}
F(\z) = \tfrac12 A \z\cdot \z, 
\ee
where $A$ is a symmetric matrix. Then for $t\in \R$  the quadric
\be\label{st}
\Sigma_t =\{ \z: F^t(\z)=0\}, \quad  F^t = F-t, 
\ee
is an unbounded hyper-surface in $\R^d$. It is smooth if $t\ne0$, while $\Sigma_0$ is a cone and  has a locus at zero. 

Let $\Z^d_L$ be the lattice of a small period $L^{-1}$,
$$
\Z^d_L =L^{-1} \Z^d, \qquad L>1, 
$$
and let $w$ be a {\it  regular } real function on $\R^d$ which means that $w$ and its Fourier transform $\hat w(\xi)$ are continuous functions which 
 decay at infinity {  sufficiently fast:
\be\label{regular}
|w(\z)| \le C  |\z|^{-d-\gamma}, \qquad |\hat w( \mathbf \xi)| \le C  |\xi|^{-d-\gamma}, 
\ee
for some $\gamma>0$ and some $C>0$. }Our goal is to study the behaviour of  series 
$$
N_L(w; F, m)=  \sum_{\z \in \Sigma_m \cap \Z^d_L} 
      w(\z)\,, 
$$
where $m\in \R$ is such that   $mL^2$ is an integer.\footnote{E.g., $m=0$ -- this case is the most important for us.}
  Obviously,  
\be\label{obvious}
N_L(w{ ; F, m}) = N_1(w_L{ ;F, L^2m}) =: N(w_L{ ;F, L^2m}), \; \text{where}\;\; 
w_L(\z) := w(\z/L).  
\ee
To study $N_L(w{ ;F, L^2m})$ we closely follow the circle method in the form, given to it by  Heath-Brown in \cite{HB}.  We 
 start with a key theorem which expresses  the analogue of Dirac's
delta function on integers, i.e.  the function 
 $\delta:\Z\mapsto\R$ such that
\begin{equation*}
  \delta(n):= \left\{ \begin{array}{cc}
    1 & \mbox{for } n=0\\
    0 & \mbox{for } n\neq 0 \end{array} \right. \,,
\end{equation*}
through a sort of Fourier representation. This result goes back at least  to Duke, Friedlander and Iwaniec \cite{DFI} (cf. also \cite{I}) , and we  state it in the form, given in
 \cite[Theorem~1]{HB}; basically, it replaces (a major arc decomposition of) the trivial identity
$$\delta(n)= \int_0^1 e^{2\pi i\alpha n}d\alpha $$
employed in the usual circle method. In
the theorem for  $q\in \Z^*$ we denote by  $e_q$ the exponential function  $e_q(x):= e^{\tfrac{2\pi i x}{q}}$, and 
denote by ${\sum}^*_{a (\text{mod}\, q)} $ the summation over residues $a$ with
$(a,q)=1$, i.e., over all integers  $a\in [1, q-1]$, relatively prime with $q$. 
  
\begin{theorem}

\label{th:1}
  For any $Q>1$, there exists $c_Q>0$ and a smooth function 
  $ h(x,y): \R_{>0}\times
  \R\mapsto \R_{\ge 0}$,  such that
  \begin{equation}\label{gl}
    \delta(n) = c_QQ^{-2}\sum_{q=1}^{\infty}{\sum_{a(\!\!\!\!\!\!\mod
        q)}}^* e_q(an) h\left(\frac qQ,\frac n{Q^2}\right)\,.
  \end{equation}
  The constant $c_Q$ satisfies
  $\,
c_Q=1+O_N(Q^{-N})\,
$
for any $N>0$, while $h$ is such that 
$h(x,y)\le c/x$ and  $h(x,y)=0$ for $x>\max(1,2|y|)$ {  (so for each $n$ the sum in \eqref{gl} contains finitely many non zero terms). }
\end{theorem}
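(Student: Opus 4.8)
The plan is to construct $h$ explicitly from a smooth partition of unity and then verify the identity by a Poisson summation / orthogonality argument. First I would fix a cutoff function: choose $w_0\in C_0^\infty(\R)$, even, non-negative, supported in $[-1,1]$, with $\int w_0 = 1$ (or normalised suitably), and set, following Heath-Brown,
\[
h(x,y) = \sum_{j=0}^\infty \frac1{xQ}\Bigl( w_0\bigl(xQ^{?}\bigr) - \cdots \Bigr),
\]
more precisely I would take a function of the form $h(x,y)=\sum_{j\in\Z} \tfrac1x\bigl(\psi(2^{-j}x) - \psi(2^{-j}x)\,\chi(\text{something with }y)\bigr)$ arranged so that the two competing scales are $q\sim Q$ and $q\sim |n|^{1/2}$. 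The honest way to organise this: define $h(x,y)$ so that $Q^{-2}\sum_{q}q\,h(q/Q,n/Q^2)$ telescopes, and so that $h(x,y)=0$ once $x>\max(1,2|y|)$, which forces the sum over $q$ in \eqref{gl} to be finite for each fixed $n$. The bound $h(x,y)\le c/x$ is then immediate from the construction since each term carries a $1/x$ and only boundedly many overlap.

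Next I would prove the delta-identity itself. The core input is the elementary fact that for $q\ge1$,
\[
\sum_{q\mid n}\ \frac1q\ {\sum_{a(\mathrm{mod}\,q)}}^{*} e_q(an) = \text{(Ramanujan-type sum)},\qquad \sum_{d\mid n} c_d(n)\ \text{manipulations},
\]
i.e. $\sum_{a(q)}^* e_q(an)$ is the Ramanujan sum $c_q(n)$, and $\sum_{q=1}^\infty \mu(q) c_q(n)/q$-type series collapse. Concretely one writes $\delta(n)$ as a telescoping sum over dyadic scales of a weight that detects $q\mid n$, uses $\sum_{a(q)} e_q(an) = q\,\mathbf 1_{q\mid n}$ together with Möbius inversion to pass from the full sum over $a$ to the restricted sum ${\sum}^*$, and checks that the tail of the resulting series contributes only $O_N(Q^{-N})$; absorbing a main-term normalisation into $c_Q$ then yields $c_Q = 1 + O_N(Q^{-N})$. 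The smoothness of $h$ is inherited from the smoothness of $w_0$ and $\chi$, and positivity is arranged by choosing these bump functions non-negative and ordering the telescoping so no cancellation of sign occurs at the level of $h$ (only at the level of the $a$-sum).

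The main obstacle, I expect, is bookkeeping rather than any single hard estimate: one must choose the two-parameter cutoff $h(x,y)$ so that simultaneously (i) the series in \eqref{gl} reproduces $\delta(n)$ exactly up to the controllable error $c_Q - 1$, (ii) $h$ is genuinely smooth across the gluing of the two regimes $x\lesssim 1$ and $x\lesssim |y|^{1/2}$ (the condition is stated as $x>\max(1,2|y|)$, so the relevant comparison is between $q/Q$ and $|n|/Q^2$, i.e. between $q$ and $\max(Q,|n|/Q)$), and (iii) $h\ge 0$ and $h(x,y)\le c/x$ hold on the nose. Getting all three from one explicit formula is the delicate point; the cleanest route is to define an auxiliary $g\in C_0^\infty$ supported in $[1/2,1]$ with $\sum_{j\in\Z} g(2^{-j}t)=1$ for $t>0$, set $h(x,y)$ to be a sum of such $g$'s at scales running up to $\max(1,2|y|)$, and verify the identity by applying Poisson summation to $\sum_{q,a} e_q(an)h(q/Q,n/Q^2)$ in the variable hidden inside the construction of $g$. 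Since the statement we are asked to prove is quoted verbatim from \cite[Theorem~1]{HB}, in the paper itself it suffices to cite Heath-Brown; but the self-contained argument sketched above is the one I would write out if a proof were required.
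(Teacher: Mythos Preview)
You are right that the paper does not prove this theorem: it quotes it as \cite[Theorem~1]{HB} and then, in Section~\ref{sec:3}, merely records Heath-Brown's explicit construction of $h$ and lists its properties. So as far as matching the paper goes, your final sentence (``it suffices to cite Heath-Brown'') is exactly what the paper does.

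That said, the construction you sketch is not the one the paper (and Heath-Brown) actually uses, and your outline would not easily yield the stated properties. The paper takes a bump $\omega\in C_0^\infty$ supported in $(1/2,1)$ with $\int\omega=1$ and sets
\[
h(x,y)=\sum_{j\ge1}\frac{1}{xj}\Bigl(\omega(xj)-\omega\bigl(\tfrac{|y|}{xj}\bigr)\Bigr).
\]
The mechanism behind the identity is not a dyadic partition $\sum_j g(2^{-j}t)=1$ or Poisson summation, but the divisor involution $d\leftrightarrow |n|/d$: for $n\ne0$ one has $\sum_{d\mid n}\omega(d/Q)=\sum_{d\mid n}\omega(|n|/(dQ))$, which makes the two pieces of $h$ cancel after one passes from the coprime $a$-sum (a Ramanujan sum) to a divisor condition. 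Your dyadic/telescoping idea is a different device; it is not clear it produces an $h$ that is simultaneously smooth, satisfies $h(x,y)=0$ for $x>\max(1,2|y|)$, and gives the exact identity up to a constant $c_Q=1+O_N(Q^{-N})$. In particular, the ``two regimes $x\lesssim 1$ and $x\lesssim|y|^{1/2}$'' you mention do not match the actual support condition, and the non-negativity of $h$ (which you try to arrange by avoiding sign cancellation) is in fact a nontrivial feature of the specific $\omega$-construction above, not something that falls out of a generic telescoping. If you were to write a self-contained proof, you would want to follow the divisor-symmetry route rather than the dyadic one.
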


Since $N(\tilde w;F, t)$ may be written as 
$
\sum_{\z\in\Z^d} \tilde w(\z) \delta (F^t(\z)),
$
then Theorem~\ref{th:1} allows to represent series $N(\tilde w;F, t)$ as an iterated sum. Transforming that sum  further using the Poisson summation 
formula as in \cite[Theorem~2]{HB} we arrive at the following result:\,\footnote{In \cite{HB} the result below is stated for $w\in C_0^\infty$. But the argument there,
based on the Poisson summation,  applies as well to regular functions $w$.}

\begin{theorem}[Theorem 2 of \cite{HB}]\label{th:2}
  For any regular function $\tilde w$, any $t$ and any 
  $Q>1$ 
  we have the expression
  \begin{equation}\label{eq:sum}
    N({\tilde w}{ ; F, t})=c_QQ^{-2}\sum_{\vc\in
      \Z^{d}}\sum_{q=1}^\infty q^{- d} 
    S_q(\vc) I^0_q(\vc)\,,
  \end{equation}
  with
  \begin{equation}\label{eq:S}
S_q(\vc) :={\sum_{a(\!\!\!\!\!\!\mod
        q)}}^* \sum_{\vb(\!\!\!\!\!\!\mod q)} e_q(a{ F^t}(\vb) + \vc\cdot \vb)    
  \end{equation}
and
\begin{equation*}
I^0_q(\vc) := \int_{\R^{d}} {\tilde w}(\z)h\left(\frac qQ,
\frac{{ F^t}(\z)}{Q^2}\right) e_q(-\z\cdot \vc)
\,d\z\,.
\end{equation*}
\end{theorem}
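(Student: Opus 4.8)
The plan is to carry out Heath-Brown's derivation of \cite[Theorem~2]{HB} essentially verbatim, checking that the sole role played there by the hypothesis $\tilde w\in C_0^\infty$ — to make every sum and integral below trivially finite — can instead be played by the decay condition \eqref{regular}. Recall that $F^t$ takes integer values on $\Z^d$, so one starts from
\[
N(\tilde w;F,t)=\sum_{\z\in\Z^d}\tilde w(\z)\,\delta\bigl(F^t(\z)\bigr),
\]
a series which is absolutely convergent by $|\tilde w(\z)|\le C|\z|^{-d-\gamma}$. Substituting the representation \eqref{gl} of Theorem~\ref{th:1} with $n=F^t(\z)$ rewrites the right-hand side as a sum over $\z\in\Z^d$, $q\ge1$ and $a$ coprime to $q$; here, for each fixed $\z$, only the $q$ with $q\le\max\bigl(Q,2|F^t(\z)|/Q\bigr)$ survive, because $h(x,y)=0$ for $x>\max(1,2|y|)$.

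To bring this into the form \eqref{eq:sum} one rearranges. Fixing $q$, split $\z=\vb+q\vu$ with $\vb$ over residues mod $q$ and $\vu\in\Z^d$; since $F$ is integer valued on $\Z^d$ one has $F^t(\vb+q\vu)\equiv F^t(\vb)\pmod q$, so $e_q\bigl(aF^t(\z)\bigr)=e_q\bigl(aF^t(\vb)\bigr)$ may be pulled out of the $\vu$-summation. With $g_q(\z):=\tilde w(\z)\,h\bigl(q/Q,F^t(\z)/Q^2\bigr)$ (an $L^1$ function, by the decay of $\tilde w$ and the boundedness of $h$), the Poisson summation formula on the lattice $\vb+q\Z^d$ gives
\[
\sum_{\vu\in\Z^d}g_q(\vb+q\vu)=q^{-d}\sum_{\vc\in\Z^d}\widehat{g_q}(\vc/q)\,e_q(\vc\cdot\vb),\qquad \widehat{g_q}(\vc/q)=I^0_q(\vc),
\]
the last equality being just the definition of $I^0_q$. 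Summing next over $a$ and then over $\vb$ assembles the exponential sum $S_q(\vc)$ of \eqref{eq:S}; gathering the factor $c_QQ^{-2}$ from Theorem~\ref{th:1} together with the factor $q^{-d}$ from Poisson, and re-ordering the outer summations as $\sum_{\vc\in\Z^d}\sum_{q\ge1}$, produces exactly \eqref{eq:sum}.

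The real work — and essentially the only point that is not bookkeeping — is to legitimise the two interchanges of summation (pulling $\sum_\z$ through $\sum_q$, and re-ordering the outcome into $\sum_{\vc}\sum_q$) together with the pointwise Poisson identity, all of which are automatic in \cite{HB} since there every sum is finite. The Poisson step for a fixed $q$ is valid because $g_q$ is continuous and both $g_q$ and $\widehat{g_q}$ have power-type decay: the decay of $g_q$ is immediate, while the decay of $\widehat{g_q}$ is where the second bound in \eqref{regular}, on $\hat w$, enters, taking over the role of the compact support of $w$ in \cite{HB} (one writes $\widehat{g_q}=\widehat{\tilde w}*\widehat{h(q/Q,F^t(\cdot)/Q^2)}$ with $\widehat{\tilde w}$ decaying like $|\xi|^{-d-\gamma}$). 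For the interchanges one produces a convergent majorant: when $q>Q$ the factor $h(q/Q,\cdot)$ is supported on $|F^t(\z)|\ge qQ/2$, hence on $|\z|\gtrsim\sqrt{qQ}$, so that $\sum_{|\z|\gtrsim\sqrt{qQ}}|\tilde w(\z)|\lesssim(qQ)^{-\gamma/2}$; combined with the classical Gauss-sum bound $|S_q(\vc)|\ll q^{d/2+1}$ and with the decay of $I^0_q(\vc)$ in $\vc$ (again supplied by the bound on $\hat w$), this yields an absolutely convergent double series, using only $d\ge2$ and $\gamma>0$. In short, the argument is structurally identical to Heath-Brown's, the new content being confined to these tail estimates, in which — and only in which — both inequalities of \eqref{regular} are invoked, the $C_0^\infty$ hypothesis of \cite{HB} being used nowhere else.
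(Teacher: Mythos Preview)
Your proposal is correct and follows exactly the approach the paper indicates: the paper does not give its own proof of this statement but simply cites \cite[Theorem~2]{HB} with the footnote that ``the argument there, based on the Poisson summation, applies as well to regular functions $w$,'' and your write-up supplies precisely those details. The one step you might tighten is the convolution justification $\widehat{g_q}=\widehat{\tilde w}*\widehat{h(q/Q,F^t(\cdot)/Q^2)}$, since the second factor is only a tempered distribution; it is cleaner to obtain the decay of $\widehat{g_q}$ directly from integration by parts, but this is a cosmetic point and does not affect the outline.
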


We will apply  Theorem~\ref{th:2} to examine 
$N(w_L{ ;F, L^2 m}) = N_L(w{ ;F, m})$ with large $L$,  choosing $Q=L>1$ and estimating  
explicitly the leading terms in $L$ of $S_q(\vc)$ and $I^0_q(\vc)$ as well as the remainders. The answer will be given in terms of  the 
 integral
\be\label{lead_term}
\sigma_{\infty}(w; F, { m}) = \int_{ \Sigma_m}
w(\z)\,\mu^{ \Sigma_m}(d\z)\ 
\ee
(which is  singular if $m=0$). Here $\mu^{\Sigma_t}(d\z)=|A\z|^{-1}dz|_{\Sigma_t}$, with
$dz|_{\Sigma_t}$ 
representing the volume element over $\Sigma_t$, induced from the standard euclidean structure on $\R^{d}$,
and $A$ the symmetric matrix in \eqref{FA}. 
Another quantity, entering the asymptotic for $N_L(w{ ;F, m})$,
is  the infinite  product
  \be\label{77}
  \sigma(F,m) = \prod_p\sigma_p(F,m)\ ,
  \ee
  where $p$ ranges over all primes, and $\sigma_p$ is defined by
\begin{equation}\label{eq:sigma_p}
\sigma_p(F,m):=\sum_{l=0}^\infty p^{-dl}S_{p^l}(0),
\end{equation}
where $S_1\equiv 1$ and $S_q$ is given  by \eqref{eq:S} with $t=L^2m$.

Motivated by the continuation \cite{DKMV} of the research \cite{DK1} we are the most interested in the special case of the quadratic forms $F$
 when $d$ is an even number, so 
$
\R^d =\{ \z=(\x,\y) ,\; \x,\y \in \R^{d/2}\},
$
and 
\be\label{F0} 
F_0(\z) = \x\cdot \y, \quad A_0(\x,\y) = (\y,\x)\,. 
\ee 
Our main result, stated below, specifies  Theorem~5 from  \cite{HB}  for $F=F_0$  in three respects: firstly, now the function $w$ has  finite 
smoothness and sufficiently fast 
 decays at infinity, while in \cite{HB} $w\in C_0^\infty$, secondly, we specify how the remainder depends on $w$, and thirdly and the 
most importantly, we remove the imposed in \cite{HB} restriction that the support of $w$ does not contain the origin (this improvement is crucial for us 
since in \cite{DKMV} the theorem is used in the situation when $w(0) \ne0$).

We note that a similar  specification of the Heath-Brown method was obtained in 
 \cite[Section~5]{BGHS18}, also for  the purposes of wave turbulence.

In the theorem and everywhere  below for  a function $f\in C^k(\R^N)$ we denote 
$
\| f\|_{n_1, n_2}  = \sup_{\z\in \R^N} \max_{|\alpha|\le n_1} |\p^\alpha
f(\z)| \lan \z\ran^{n_2}\,,
$
where   $n_1 \in \NN$, $n_1\le k,$ and  $ n_2\in \R_{\ge 0}$.   Here
\begin{equation*}
  \langle \x\rangle := \max\{1,|\x|\} \quad \text{for}\;\: \x\in \R^l,
\end{equation*}
for any $l\ge1$. { 
 Note that if $\| w \|_{d+1, d+1} <\infty$, then the function $w$ is regular. Indeed, the fist relation in \eqref{regular} is obvious. To
prove the second note that for any integer vector 
$
\alpha \in (\N\cup\{0\})^d,
$
$
\xi^\alpha \hat w(\xi) = \big( \frac{i}{2\pi}\big)^{|\alpha|} \widehat{
\p_\x^\alpha w}(\xi).
$
But if 
$
|\alpha| \equiv \sum\alpha_j \le d+1,
$
then $|\p_\x^\alpha w| \le C \lan \x\ran^{-d-1}$, so $\p_\x^\alpha w$ is an $L_1$-function. Thus its Fourier transform 
 $ \widehat{\p_\x^\alpha w}$ is a bounded continuous function for each 
$|\alpha| \le d+1$ and the second relation in \eqref{regular} also holds. }

\begin{theorem}\label{th:3}
  For any $0<\eps\le1$ and any even dimension 
   $d> 4$, there exist constants $N_1(d,\eps)$ and  $N_2(d,\eps) \le N_3(d,\eps)$, 
  such that  if $w \in C^{N_1}$ and a real number $m$ satisfies $mL^2 \in \Z$,  then 
  \begin{equation}\label{m_result}
  \big|   N_L(w{ ; F_0, m})-
  \sigma_\infty(F_0,w{ , m})\sigma(F_0{,    m}) L^{d-2} \big|  \le C 
    L^{d/2+\eps}\left(\|w\|_{N_1,N_2}+\|w\|_{0,N_3}\right),
  \end{equation}
  where the constant $C$ depends on $d, \eps${ ,  $m$}. 
  In particular if $\eps=1/2$, then one can take  $N_1= 2d^2-2d$, $N_2=
  7(d+1)$ and $N_3=N_1+3d+4$.
\end{theorem}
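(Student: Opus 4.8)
The plan is to start from the exact identity of Theorem~\ref{th:2} with $\tilde w=w_L$, $t=L^2m$ and $Q=L$, so that
\be\label{plan:start}
N_L(w;F_0,m)=c_L L^{-2}\sum_{\vc\in\Z^d}\sum_{q=1}^\infty q^{-d}S_q(\vc)I^0_q(\vc),\non
\ee
and to split the sum into the ``main'' term $\vc=0$ and the ``non-main'' terms $\vc\neq0$. For the main term one recognizes $\sum_q q^{-d}S_q(0)$ as (the partial sum of) the singular series $\sigma(F_0,m)=\prod_p\sigma_p(F_0,m)$ from \eqref{77}--\eqref{eq:sigma_p}; the tail $q>L$ is controlled because $h(x,y)=0$ for $x>\max(1,2|y|)$ forces $q\lesssim L\langle F_0(\z)/L^2\rangle$ on the support of $I^0_q$, and using the decay of $w$ one bounds the contribution of those $q$. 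The key \emph{local} input here is a good estimate for the Gauss-type sum $S_q(0)$ for $F=F_0$: because $F_0(\vb)=\x\cdot\y$ is a split hyperbolic form, $S_q(0)$ factors nicely over prime powers and one gets essentially $|S_q(0)|\lesssim q^{d/2+1}$ times divisor-type factors, which (since $d>4$) makes $\sum_q q^{-d}S_q(0)$ and its tails absolutely convergent with explicit rate. Simultaneously $L^{-2}I^0_q(0)$ must be shown to converge, as $L\to\infty$, to $\sigma_\infty(w;F_0,m)L^{d-2}$: write $I^0_q(0)=\int w_L(\z)h(q/L,F_0(\z)/L^2)\,d\z$, rescale $\z\mapsto L\z$ to get $L^d\int w(\z)h(q/L,F_0(\z))\,d\z$, and use the coarea formula together with $\int_0^\infty h(r,u)\,du\to$ the appropriate constant to extract the surface measure $\mu^{\Sigma_m}$; the singularity at $m=0$ (where $\sigma_\infty$ may diverge logarithmically in the naive sense but is finite because $w$ decays and $d>4$) is handled by the same decay hypotheses on $w$, and this is exactly where the removal of the Heath-Brown restriction $0\notin\mathrm{supp}\,w$ costs extra smoothness/decay.

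For the non-main terms $\vc\neq0$ the plan is the standard one: integrate by parts in $I^0_q(\vc)=\int w_L(\z)h(q/L,F_0(\z)/L^2)e_q(-\z\cdot\vc)\,d\z$ to exploit the oscillation $e_q(-\z\cdot\vc)$. After the rescaling $\z\mapsto L\z$ each integration by parts produces a factor $\lesssim (q/L)/|\vc|$ together with derivatives falling on $w$ and on $h(q/L,F_0(\z))$; since $h(x,\cdot)$ and its derivatives are bounded by $c/x$ and supported in $F_0(\z)\lesssim\max(1,\ldots)$, one gets, after $k$ integrations by parts, a bound of the shape
\be\label{plan:ibp}
|I^0_q(\vc)|\lesssim L^{d}\Big(\tfrac{q}{L|\vc|}\Big)^{k}\,\|w\|_{k,N}\,(\text{measure of the effective support}),\non
\ee
with the ``effective support'' in $\z$ having $F_0(\z)$ bounded, whose size grows polynomially in $L$; one also needs the complementary trivial bound $|I^0_q(\vc)|\lesssim L^d\|w\|_{0,N'}(q/L)^{-1}\cdot(\text{support})$ from $h\le c/x$. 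Combining these with the Weyl-type bound for the full exponential sum $S_q(\vc)$ from \eqref{eq:S} (for the split form $F_0$ one again has an explicit factorization giving $|S_q(\vc)|\lesssim q^{d/2+1}$ up to divisor factors and $\gcd$-conditions relating $q$ to $\vc$), one sums over $q$ and over $\vc\neq0$. The exponents are chosen so that the $\vc$-sum converges: each extra integration by parts gains $|\vc|^{-1}$, so $k$ must exceed $d$ to sum over $\vc\in\Z^d$, and the interplay between the $q^{-d}|S_q(\vc)|\lesssim q^{-d/2+1}$ decay and the $L$-powers from $I^0_q$ produces the final exponent $L^{d/2+\eps}$; tracking these inequalities with the explicit divisor bounds is what pins down the concrete values $N_1=2d^2-2d$, $N_2=7(d+1)$, $N_3=N_1+3d+4$ for $\eps=1/2$.

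I expect the main obstacle to be the careful bookkeeping near the singular fibre $\Sigma_0$ when $w(0)\neq0$: unlike in \cite{HB}, the integrals $I^0_q(\vc)$ are no longer over a region bounded away from the cone point, so the integration-by-parts estimates must be performed with weights $\langle\z\rangle^{-N_2}$, $\langle\z\rangle^{-N_3}$ that are strong enough both to make $\sigma_\infty(w;F_0,m)$ finite at $m=0$ and to absorb the polynomial-in-$L$ growth of the effective supports, while the smoothness order $N_1$ must be large enough (scaling like $d^2$) to win against the $d$-fold $\vc$-summation after integration by parts. A secondary but nontrivial point is making the approximation $L^{-2}I^0_q(0)\to\sigma_\infty(w;F_0,m)L^{d-2}$ quantitative with an error $O(L^{d/2+\eps})$ uniformly in $q\lesssim L$; this requires the coarea/stationary-phase-type estimate for $\int w(\z)h(q/L,F_0(\z))\,d\z$ to come with an explicit remainder in terms of $\|w\|_{N_1,N_2}$, which is where the finite-smoothness hypothesis is genuinely used rather than $C_0^\infty$. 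Everything else — the factorization of $S_q$ for $F_0$, the absolute convergence of the singular series for $d>4$, the tail estimate from $h(x,y)=0$ for $x>\max(1,2|y|)$, and $c_Q=1+O_N(Q^{-N})$ — is routine once the two estimates above are in place.
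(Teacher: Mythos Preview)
Your plan for $\vc=0$ is essentially correct and matches the paper (Sections~\ref{sec:3}--\ref{sec:4}): coarea plus Lemma~\ref{l:9} to extract $\sigma_\infty$, multiplicativity of $S_q(0)$ to produce $\prod_p\sigma_p$, and the Appendix to control the cone singularity when $w(0)\ne0$. But your treatment of $\vc\ne0$ has a genuine gap. The integration-by-parts estimate you describe is the paper's Proposition~\ref{l:19} and yields $|I_q(\vc)|\lesssim L^{d+1}q^{-1}|\vc|^{-N}$: every derivative that lands on $h(q/L,F^m(\z))$ costs a factor $L/q$ (Lemma~\ref{l:5}), which exactly cancels the gain $q/(L|\vc|)$ from the oscillation once $|\vc|$ is bounded. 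Combined with $|S_q(\vc)|\lesssim q^{d/2+1}$ this gives only $|n(\vc)|\lesssim L^{d+1}|\vc|^{-N}$, hence $\sum_{\vc\ne0}|n(\vc)|\lesssim L^{d+1}$ and, after the prefactor $c_LL^{-2}$, a remainder of order $L^{d-1}$ rather than $L^{d/2+\eps}$. No choice of $k$ in your scheme recovers the missing factor $L^{-d/2+1}$, because you are only exploiting the \emph{linear} phase $e_q(-\z\cdot\vc)$.

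The paper handles this by a further split of $\vc\ne0$ into $|\vc|>L^{\gamma_1}$, where your integration-by-parts argument does suffice (Corollary~\ref{c:J>}), and $0<|\vc|\le L^{\gamma_1}$, where a completely different estimate is needed (Section~\ref{sec:6}). There one writes $w(\z)h(q/L,F^m(\z))=\tilde w(\z)\int p(t)\,e(tF^m(\z))\,dt$ via the inverse Fourier transform in the second variable of $h$, obtaining an oscillatory integral with \emph{quadratic} phase $tF(\z)-\vu\cdot\z$, $\vu=L\vc/q$; a good-set/bad-set stationary-phase decomposition then yields $|I_q(\vc)|\lesssim L^{d/2+1+\gamma_1}q^{d/2-1}$ (Lemma~\ref{l:22}). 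This bound, not the cone-point analysis, is what produces the exponent $d/2$ and forces the large smoothness order $\bar N=\lceil d^2/\gamma_1\rceil-2d$; it is entirely absent from your plan.
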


\noindent{\it Remarks.} 
1) Here and everywhere else the dependence on $m$ is uniform
  on every compact interval. 
  \smallskip
  
  \noindent 2)
  The values of the constants $N_j(d, \eps)$ in \eqref{m_result}, obtained below, is far from optimal since
  it was not our goal to optimise them. 
    \smallskip
  
  \noindent 3) In the theorem   and in similar situations    below the result does not apply if the r.h.s. in \eqref{m_result} is infinite.
    \smallskip
  
    \noindent  {  4) Since the theorem's proof is based on the representation \eqref{eq:sum}, then the function $w$ should be regular
    (see \eqref{regular}). But this holds true if 
    $
    \| w\|_{d+1, d+1} <\infty,
    $
    and so is valid  if the r.h.s. of  \eqref{m_result}  
    is finite, with $N_1, N_2 $  sufficiently big. E.g. if $N_1, N_2 $ are  as big as in the last line of the theorem's assertion.
      \smallskip

\noindent    5) The quantity $\sigma(F_0{,    m})$ can be easily evaluated. Let us give this evaluation for $m=0$.  
Recall then that $\sigma(F_0{,    0})=\sigma(F_0)=\Pi_{prime\; p}\sigma_p(F_0)$ for 
$$ 
\sigma_p(d)=\sigma_p(F_0)= \lim_{k\to \infty} \frac{\sharp\{ F_0(x,y)\equiv 0\,\mod (p^{k})\}}{p^{(2d-1)k}}. 
 $$
However, since  for any prime $p$ the hypersurface $\{F_0 \, \mod\,p=0\}$ over $\F_p$ is the affine cone over a smooth projective quadric  we see that
 $$ \sigma_p(d)=  {\sharp\{ F_0(x,y) \equiv 0\,\mod (p )\}}p^{1-2d}=N_p(d)p^{1-2d}    $$
where $N_p$ is the number of $\F_p$-points on $\{F_0 \equiv 0\, \mod \,p\}.$\smallskip
Thus, $N_p(1)=2p-1$ and 
$$N_p(d+1)=\sharp \{ \hbox{\rm solutions with }   x_{d+1}=0\}+\sharp \{ \hbox{\rm solutions with } x_{d+1}\ne 0\}$$
$$ =pN_p(d )+(p-1)p^{2d}. \hskip 5 cm$$ 
Therefore, $$\forall d\ge 2,\; N_p(d)=p^{2d-1}+p^{d} -p^{d-1},\;\sigma_p(d)=1+p^{1-d}-p^{-d} .$$
For $\sigma_d$ we get then that
$$\sigma_3 =\prod_{  p}\big(1+p^{-2}-p^{-3}\big)=1.305..,\;\;\;\;
 \sigma_4 =\prod_{  p}\big(1+p^{-3}-p^{-4}\big)=1.100..,$$
whereas
$$ 1<\sigma_d =\prod_{  p}\big(1+p^{1-d}-p^{-d}\big ) <1+2^{2-d} $$
tends to 1 when $d$ grows.
  \medskip
  
  The proof of Theorem \ref{th:3} occupies the rest of the paper and closely follows that of \cite[Theorem 5]{HB} with additional control how the constants depend on $w$.
  The only significant difference comes in Sections 3 and 4 below where we do not assume that the function $w$ vanishes near the origin, the last assumption being crucial in the analysis of integrals in Sections 6 and 7 of \cite{HB}. To cope with this difficulty we have to  examine the smoothness at zero of the function 
  \be\label{hrum}
  m \mapsto \sigma_\infty (F_0, w, m)
  \ee
  and its decay at  infinity. The corresponding analysis is performed in Appendix, where using the techniques, developed in \cite{DK1} to study 
  integrals \eqref{lead_term}, we prove that  function \eqref{hrum} is $(d/2-2)$-smooth, but for 	a generic $w(z)$ its  derivative of  order  $(d/2-1)$ has a logarithmic 
  singularity ar zero. There we also get a (non optimal) estimate for the rate of growth of \eqref{hrum} at infinity. 
  \medskip

   The approach we use to prove Theorem \ref{th:3}     is general and applies to other quadratic forms \eqref{FA}.  Moreover, most of the 
 auxiliary results, obtained on the way to prove the  theorem, are established  for  the general  $F$. In an extended version \cite{refinement} 
  of this paper which is now  under preparation,   the theorem will be proven in its full generality. Namely, there we obtain
 
 \begin{amplification}
 The assertion of Theorem \ref{th:3} with modified constants $N_j$ and $C$
  remain true for any non degenerate and  non sign-definite   
   quadratic form \eqref{FA}. Then the constants $N_j$
  and $C$ also depend on the minimal and the maximal eigenvalues of the   operator $A$. 
 \end{amplification}

\subsection{Scheme of the proof of Theorem \ref{th:3}}\label{sec:2}

It is easy to see that if the r.h.s. in \eqref{m_result} is finite, then the function $w$ is regular in the sense of Section~\ref{s_1.1}, so Theorem~\ref{th:2} applies. 
Then, according to \eqref{eq:sum} and \eqref{obvious}, 
\begin{equation}\label{eq:sum'}
N_L(w{ ; F, m})=c_L \,L^{-2}\sum_{\vc\in \Z^{d}}\sum_{q=1}^\infty q^{-d}
S_q(\vc) I_q(\vc)\,,
\end{equation}
where $S_q$ is given by \eqref{eq:S} and 
\begin{equation}\label{eq:I'}
I_q(\vc) := \int_{\R^{d}} w\left(\frac{\z}{L}\right)h\left(\frac qL,
\frac{{ F^{mL^2}}(\z)}{L^2}\right) e_q(-\z\cdot \vc)
\,d\z\,.
\end{equation}
Consider 
$$
n(\vc)=n(\vc; L)=\sum_{q=1}^\infty q^{-d}
S_q(\vc) I_q(\vc), \; \mbox{so that}\; N_L(w{ ;F,m})=c_L L^{-2}\sum_{\vc\in\Z^{d}} n(\vc).
$$ 
For an $\gamma_1 \in (0,1)$ we  write
\begin{equation}\label{eq:N_L=}
N_L(w{ ;F, m})=c_L L^{-2}\big(J_0 + J_{<}^{\gamma_1} + J_{>}^{\gamma_1}\big),
\end{equation}
where 
\begin{equation}\label{eq:J><}
 J_0:=n(0)\,, \quad J_<^{\gamma_1}:=\sum_{\vc\ne 0,\,|\vc|\leq
   L^{\gamma_1}}  n(\vc) \,,\quad 
J_>^{\gamma_1}:=\sum_{|\vc|> L^{{\gamma_1}}}  n(\vc)\,.
\end{equation}
Proposition~\ref{l:19} (which is a modification of Lemmas~19 and  25  from \cite{HB})  implies that 
 $$
 |J_>^{\gamma_1}|\lappr_{d,{\gamma_1}{ ,m}} \|w\|_{N_0,2N_0+d+1}\,
 $$
with $N_0:=\lceil (d+1)(1+1/{\gamma_1})\rceil$ (see Corollary~\ref{c:J>}). In 
Proposition~\ref{l:J_<},  following  Lemmas~22 and   28  from \cite{HB}, we show   that 
 $$
 |J_<^{\gamma_1}|\lappr_{d,{\gamma_1}{ ,m}} L^{d/2+2+\gamma_1(d+1)} \left(\|w\|_{\bar
   N,d+5}+\|w\|_{0,\bar N+3d+4}\right) \,, \quad \bar N= \lceil d^2/{\gamma_1}\rceil-2d\,.
 $$
  
  To analyse $J_0$ we write it as  $J_0=J_0^++J_0^-$ with
\be\lbl{eq:n(0)=}
J^+_0:=\sum_{q>L^{1-\gamma_2}}  q^{-d}
S_q(0) I_q(0)\,,\quad J_0^-:=\sum_{q\leq L^{1-\gamma_2}} q^{-d}
S_q(0) I_q(0)\,,
\ee
for some $0<\gamma_2<1$.
 Lemma~\ref{l:I_A}, which is a combination of Lemmas 16 and 25 from \cite{HB}, modified using the results from Appendix, 
 implies that 
  $$
  \Bigl|J_0^+\Bigr|\lappr_{d,\gamma_2} L^{d/2+2+\gamma_2(d/2-1)}\|w\|_{L_1}\le
  L^{d/2+2+\gamma_2(d/2-1)}\|w\|_{0,d+1}  .
  $$
  
 Finally  Lemma~\ref{l:I_B}, which is a combination of Lemma~13 and  simplified 
 Lemma~31  from \cite{HB} with the  results from Appendix, establishes  that 
 $$
 J_0^- = L^{d} \sigma_\infty(F,w) \sigma(F,m) 
 + O_{d,\gamma_2{ ,m}}\left(
 \left(\|w\|_{d/2-2,d-1} +\|w\|_{0,d+1}\right)L^{d/2+2+\gamma_2}\right)
 $$
 (see \eqref{lead_term} and \eqref{77}). 
 Identity \eqref{eq:N_L=} together with the estimates above  implies 
 the desired result if we choose $\gamma_2=\eps/{(d/2-1)}$ and
 $\gamma_1= \eps/(d+1)$.
 \medskip
 
 \noindent
 {\it Notation.} We write 
 $
 A  \lappr_{a,b}  B
 $ 
 if $A<C B$, where the constant $C$ depends on $a$ and $b$.  Similar, $O_{a,b}( \|w\|_{m_1, m_2}) $ stands for a quantity, bounded in norm by 
 $
 C(a,b) \|w\|_{m_1, m_2}. 
 $
 We denote $e_q(x) = e^{2\pi ix/q}$ and abbreviate $e_1(x)=:e(x)$.

 \section{Components of Singular series  }
 In the presnt section we analyse the sums $S_q(\vc)$ entering in the definition of the  singular series $\sigma(F,m),\,\sigma_p(F,m)$.

 \begin{lemma}\label{l:25HB}({25 in \cite{HB}.})
 	We have $|S_q(\vc)|\lappr_A q^{d/2+1}$, uniformly in $\vc\in\Z^{d}$
 \end{lemma}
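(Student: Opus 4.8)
The plan is to reproduce the argument of \cite[Lemma~25]{HB}, which rests on multiplicativity of $S_q(\vc)$ in $q$ together with an explicit treatment of the quadratic Gauss sum sitting inside $S_q$.

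\emph{Multiplicativity.} First I would show that if $q=q_1q_2$ with $(q_1,q_2)=1$ and $\overline{q_i}$ denotes an inverse of $q_{3-i}$ modulo $q_i$, then splitting both summation variables $a$ and $\vb$ through the Chinese Remainder Theorem and using $e_{q_1q_2}(n)=e_{q_1}(\overline{q_2}\,n)\,e_{q_2}(\overline{q_1}\,n)$ yields
\[
S_{q_1q_2}(\vc)=S_{q_1}(\overline{q_2}\,\vc)\,S_{q_2}(\overline{q_1}\,\vc).
\]
Since the desired bound is uniform in $\vc$, taking absolute values reduces everything to proving $|S_{p^k}(\vc)|\le C_{A,p}\,p^{k(d/2+1)}$ for every prime power $p^k$, with $C_{A,p}=1$ for all primes $p$ outside the finite set of divisors of $2\det A$.

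\emph{The inner Gauss sum.} For fixed $q=p^k$ write $S_{p^k}(\vc)={\sum}^{*}_{a}\,T(a)$, where $T(a)=\sum_{\vb\,(\mathrm{mod}\,p^k)}e_{p^k}(aF(\vb)+\vc\cdot\vb)$, the constant term $-at$ of $F^t$ contributing only a unimodular factor. For odd $p$ there is $P\in\mathrm{GL}_d(\Z_p)$ with $P^{\mathrm T}AP=\mathrm{diag}(p^{e_1}u_1,\dots,p^{e_d}u_d)$, units $u_i$ and exponents $e_i\ge0$ satisfying $\sum_i e_i=v_p(\det A)$ ($v_p$ the $p$-adic valuation); the substitution $\vb\mapsto P\vb$ then factorises $T(a)$ into a product of $d$ one-variable sums $\sum_{\beta\,(\mathrm{mod}\,p^k)}e_{p^k}(\alpha_i\beta^2+\gamma_i\beta)$ with $v_p(\alpha_i)=e_i$. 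A completion-of-the-square computation, combined with a short $p$-adic descent in the range $p\mid\alpha_i$, shows that each such one-variable sum either vanishes or has modulus $p^{(k+\min(e_i,k))/2}$; hence $|T(a)|\le p^{(kd+v_p(\det A))/2}$, uniformly in $a$ and $\vc$. The prime $p=2$ is handled the same way, replacing the diagonalisation by the $2$-adic normal form (orthogonal sums of unary forms and of the binary forms $2xy$ and $2(x^2+xy+y^2)$ scaled by powers of $2$) and keeping track of the extra factor coming from the $\tfrac12$ in \eqref{FA}; this produces $|T(a)|\le C'_A\,2^{kd/2}$.

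\emph{Assembling, and the main obstacle.} Summing over the at most $\varphi(p^k)<p^k$ admissible residues $a$ gives $|S_{p^k}(\vc)|\le p^{k(d/2+1)}\,p^{v_p(\det A)/2}$ for odd $p$ and $|S_{2^k}(\vc)|\le C'_A\,2^{k(d/2+1)}$; feeding these into the multiplicativity relation and using that $p^{v_p(\det A)/2}=1$ for all but finitely many $p$, while $\prod_{p\mid 2\det A}p^{v_p(2\det A)/2}<\infty$, yields $|S_q(\vc)|\lappr_A q^{d/2+1}$ for all $q$, uniformly in $\vc$. The only point demanding genuine care is the modulus of the one-variable sums $\sum_\beta e_{p^k}(\alpha\beta^2+\gamma\beta)$ when $p\mid\alpha$, and in particular their behaviour at $p=2$, where $2$ is not invertible and the $\tfrac12$ in $F$ intervenes: one must pin down exactly when these sums vanish and bound the loss otherwise. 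However, that entire loss is controlled by the $p$-adic structure of $A$ at the finitely many primes dividing $2\det A$, so it is absorbed into the constant implicit in $\lappr_A$ and does not affect the exponent $d/2+1$.
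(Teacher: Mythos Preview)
Your approach is correct and is the ``explicit'' route via multiplicativity and $p$-adic diagonalisation of the form, but it differs substantially from the paper's argument, which is much shorter and avoids prime-power analysis altogether. The paper applies Cauchy--Schwarz directly: writing
\[
|S_q(\vc)|^2\le\phi(q){\sum_{a}}^{*}\sum_{\vu,\vv}e_q\big(a(F(\vu)-F(\vv))+\vc\cdot(\vu-\vv)\big),
\]
then with $\vw=\vu-\vv$ one uses $F(\vu)-F(\vv)=\vv\cdot A\vw+F(\vw)$, so the free sum over $\vv$ vanishes unless $q\mid A\vw$ componentwise; the number of such $\vw\pmod q$ is $O_A(1)$, and the bound $|S_q(\vc)|^2\lappr_A\phi(q)^2q^d$ drops out. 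This argument is uniform in $q$ and $\vc$, requires no multiplicativity, no Gauss-sum evaluations, and no separate treatment of $p=2$; the dependence on $A$ enters only through the count of $\vw$ with $A\vw\equiv0\pmod q$, which is bounded by $|\det A|$. Your method buys more: it would give the exact evaluation of $S_{p^k}(\vc)$ at good primes and could detect cancellation in the $a$-sum (which neither proof exploits here), but at the cost of the $p$-adic casework you describe, especially at $p=2$. For the purpose of this lemma the Cauchy--Schwarz argument is decisively simpler.
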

\begin{proof} According to \eqref{eq:S}, 
  \begin{equation}\label{eq:S_q=}
  \begin{split}
&|S_q(\vc)|^2 \leq \phi(q) {\sum_{a(\!\!\!\!\!\!\mod
		q)}}^* \, \Big|\sum_{\vb(\!\!\!\!\!\!\mod q)} e_q(a{ F^{mL^2}}(\vb) + \vc\cdot \vb)   \Big|^2  \\
&= \phi(q) {\sum_{a(\!\!\!\!\!\!\mod q)}}^*  \sum_{\vu,\vv(\!\!\!\!\!\!\mod q)} 
e_q\big(a({ F^{mL^2}(\vu)-F^{mL^2}(\vv)}) + \vc\cdot (\vu-\vv)\big),
\end{split}
\end{equation}
where $\phi(q)$ is the Euler totient function. Since
$
F^t(\z) = \tfrac12 A\z\cdot\z -t, 
$
then 
$$
{ F^{mL^2}(\vu)-F^{mL^2}(\vv)}=(A\vv)\cdot \vw
+ F(\vw)=\vv\cdot A\vw + F(\vw). 
$$
So
$$
e_q\big({ F^{mL^2}(\vu)-F^{mL^2}(\vv)}) + \vc\cdot (\vu-\vv)\big)=
e_q\big(aF(\vw) + \vc\cdot \vw\big)\,
e_q(a\vv\cdot A\vw).
$$
Now we see that the summation over $\vv$ in \eqref{eq:S_q=} 
produces  a zero contribution, unless each component of the vector $A\vw$ is divisible by $q$. This property  holds
for $N_\Delta$ possible  values of $\vw$, where $\Delta=\det A$. 
Thus,  
$$|S_q(\vc)|^2 \lappr_A \phi(q) {\sum_{a(\!\!\!\!\!\!\mod
		q)}}^* \, \sum_{\vv(\!\!\!\!\!\!\mod q)} 1 \le
\phi^2(q)\, q^{d}. $$   
\end{proof}
	
We have the following trivial corollary of Lemma~\ref{l:25HB}:
\begin{corollary}\label{c:28HB} 
	We have	$\sum_{q\leq X} |S_q(\vc)|\lappr_A X^{d/2+2}$ uniformly in $\vc\in \Z^{d}$.
\end{corollary}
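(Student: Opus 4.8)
The plan is simply to sum the pointwise bound of Lemma~\ref{l:25HB} over all $q\le X$. First I would invoke Lemma~\ref{l:25HB} to get $|S_q(\vc)|\lappr_A q^{d/2+1}$ with an implied constant independent of $\vc$ and of $q$. Then I would write
\[
\sum_{q\le X}|S_q(\vc)|\lappr_A \sum_{q\le X} q^{d/2+1}.
\]
The remaining step is to bound the arithmetic sum $\sum_{q\le X} q^{d/2+1}$, and the crudest possible argument suffices: replace each term $q^{d/2+1}$ by $X^{d/2+1}$ and note that there are at most $X$ terms, which yields $\sum_{q\le X} q^{d/2+1}\le X^{d/2+2}$. (Alternatively one may compare the sum with $\int_0^X x^{d/2+1}\,dx = \tfrac{2}{d+4}X^{d/2+2}$, which gives the same order.) Since the constant in Lemma~\ref{l:25HB} does not depend on $\vc$, uniformity in $\vc\in\Z^{d}$ is automatic.

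There is essentially no obstacle here: the statement is labelled a trivial corollary precisely because its only input is the single-$q$ estimate already established, and the loss incurred by the crude summation is harmless for our later purposes. The one point worth a moment's care is simply to track that the implied constant depends only on $A$ (through $\Delta=\det A$, exactly as in Lemma~\ref{l:25HB}) and not on $X$ or $\vc$, so that the bound is genuinely uniform in $\vc\in\Z^{d}$.
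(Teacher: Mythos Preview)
Your argument is correct and is exactly what the paper intends: it calls this a ``trivial corollary'' of Lemma~\ref{l:25HB} and gives no separate proof, so summing the bound $|S_q(\vc)|\lappr_A q^{d/2+1}$ over $q\le X$ as you do is precisely the expected step. The uniformity in $\vc$ indeed follows immediately from that of Lemma~\ref{l:25HB}.
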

Recalling the definition \eqref{eq:sigma_p} for a prime $p$ we also have
\begin{lemma}\label{l:31HB}
	For any $d>4$ we have $$\sum_{q\leq X}q^{-d} S_q(0) = \prod_p\sigma_p +
        O_A(X^{-d/2+2}),$$
	where the product is taken over all primes.
\end{lemma}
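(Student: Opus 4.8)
The plan is to prove that the partial sums $\sum_{q \le X} q^{-d} S_q(0)$ converge to the Euler product $\prod_p \sigma_p$ with the stated error by exploiting multiplicativity of $q \mapsto q^{-d} S_q(0)$ together with the bound $|S_q(\vc)| \lappr_A q^{d/2+1}$ from Lemma~\ref{l:25HB}. First I would recall that $S_q(0)$ is multiplicative in $q$: this follows from the Chinese Remainder Theorem applied to the defining sum \eqref{eq:S} with $\vc=0$, writing each residue $a \pmod{q_1 q_2}$ and each residue $\vb \pmod{q_1 q_2}$ via CRT (for coprime $q_1, q_2$) so that $S_{q_1 q_2}(0) = S_{q_1}(0) S_{q_2}(0)$ up to the usual unit factors, which here combine correctly because $F^{mL^2}$ is a polynomial with integer-divisible structure ($mL^2 \in \Z$) and $A$ has integer entries up to clearing denominators — this is exactly Lemma~23 of \cite{HB}, which I would simply cite.

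Given multiplicativity, the Dirichlet series $\sum_{q=1}^\infty q^{-d} S_q(0)$ factors formally as the Euler product $\prod_p \big( \sum_{l=0}^\infty p^{-dl} S_{p^l}(0) \big) = \prod_p \sigma_p(F,m)$, provided we establish absolute convergence. Here I invoke Lemma~\ref{l:25HB}: $|q^{-d} S_q(0)| \lappr_A q^{-d} q^{d/2+1} = q^{-d/2+1}$, so for $d > 4$ we have $-d/2 + 1 < -1$ and $\sum_q |q^{-d} S_q(0)|$ converges absolutely. Absolute convergence of the Dirichlet series on this half-plane legitimizes the Euler product factorization and shows each local factor $\sigma_p$ is a convergent sum, and also (via $|p^{-dl} S_{p^l}(0)| \lappr_A p^{l(-d/2+1)}$) that $\sigma_p = 1 + O_A(p^{-d/2+1})$, so the infinite product $\prod_p \sigma_p$ converges.

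For the quantitative tail estimate I would simply write $\prod_p \sigma_p - \sum_{q \le X} q^{-d} S_q(0) = \sum_{q > X} q^{-d} S_q(0)$ and bound the tail directly: by Lemma~\ref{l:25HB} (or more cleanly by partial summation against Corollary~\ref{c:28HB}), $\big| \sum_{q > X} q^{-d} S_q(0) \big| \le \sum_{q > X} |S_q(0)| q^{-d} \lappr_A \sum_{q>X} q^{-d/2+1} \lappr_A X^{-d/2+2}$, using $d > 4$ so the exponent $-d/2+2 < 0$. The only mild subtlety — and the main thing to get right — is the bookkeeping that turns the absolutely convergent Dirichlet series into the stated Euler product, i.e. confirming that the local factors are exactly the $\sigma_p$ of \eqref{eq:sigma_p} and that no boundary-of-convergence issues arise; but since $d>4$ puts us strictly inside the region of absolute convergence, this is routine and the estimate above is immediate.
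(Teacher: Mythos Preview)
Your proposal is correct and follows essentially the same route as the paper: establish multiplicativity of $q\mapsto S_q(0)$ via the Chinese Remainder Theorem (the paper derives the slightly more general identity $S_{qq'}(\vc)=S_q(\bar q'\vc)S_{q'}(\bar q\vc)$ for $(q,q')=1$, which for $\vc=0$ is exactly what you use), convert the absolutely convergent series into the Euler product $\prod_p\sigma_p$, and bound the tail $\sum_{q>X}q^{-d}S_q(0)$ directly by Lemma~\ref{l:25HB} to obtain $O_A(X^{-d/2+2})$. The only cosmetic difference is that the paper writes out the CRT decomposition explicitly rather than citing Lemma~23 of \cite{HB}.
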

\begin{proof}
Let us write 
$$
\sum_{q\leq X}q^{-d} S_q(0)=\sum_{q=1}^\infty q^{-d} S_q(0) - \sum_{q\geq X}q^{-d} S_q(0).
$$

By definition
$$
S_{qq'}(\vc) = {\sum_{ a(\!\!\!\!\!\!\mod qq')}}^* \,
\sum_{\vv(\!\!\!\!\!\!\mod qq')} e_{qq'}(aF(\vv) + \vc\cdot
\vv)\,.
$$
When $(q,q') = 1$ we can replace the summation on $a$ by a double
summation on $a_q$ modulo $q$ and $a_{q'}$ modulo $q'$, writing $a=q
a_{q'}+q'a_q$, and the summation on $\vv$ with the double summation
on $\vv_q$ modulo $q$ and $\vv_{q'}$ modulo $q'$, by writing $\vv=
q\bar q \vv_{q'} + q'\bar q' \vv_q$, where $\bar q$ and $\bar q'$ are
defined through $q\bar q=1\,(\!\!\!\mod q')$ and
$q'\bar q' = 1\,(\!\!\!\mod q)$. We substitute in the previous formula
and get
$$
S_{qq'}(\vc) = S_q(\bar q' \vc) S_{q'}(\bar q \vc)\,,
$$
whenever $(q,q')=1$ (cf. Lemma { 23} from \cite{HB}).

This implies the identity 
$$
\sum_{q=1}^\infty q^{-d} S_q(0) = \prod_p\sigma_p.
$$
On the other hand, due to Lemma~\ref{l:25HB}, 
$$
\big|\sum_{q\geq X}q^{-d} S_q(0)\big|\lappr_A \sum_{q\geq X}q^{-d/2+1} \lappr_A X^{-d/2+2}.
$$
\end{proof}

\section{Singular integral}\label{sec:3}

\subsection{Properties of $h(x,y)$}
We construct a function $h(x,y)$ entering Theorem~\ref{th:1},
starting from the weight function
$w_0\in C_0^\infty(\R)$, defined as
\begin{equation}\label{eq:or_weight}
w_0(x) = \left\{\begin{array}{cc}
\exp\left(\frac1{x^2-1}\right) & \mbox{for }|x|<1\\
0 & \mbox{for }|x|\ge 1
\end{array}
\right. \,.
\end{equation}
We denote by
$$
c_0:= \int_{-\infty}^{\infty}w_0(x)\,dx 
$$
and introduce the shifted weight function
\begin{equation*}
  \omega(x)=\frac{4}{c_0}w_0(4x-3)\,.
\end{equation*}
which belongs, of course, to $C^\infty_0(\R)$. Obviously, 
 $ 0\le \omega\le 4e^{-1}/c_0$, $\omega$ is supported on $(1/2,1)$, and
$$
\int_{-\infty}^{\infty}\omega(x)\,dx =1 \,.
$$

The required function $h$ is defined in terms of $\omega$ as
\begin{equation}\label{eq:h}
  \begin{split}
    h(x,y)
& := h_1(x)-h_2(x,y)\,, \mbox{with }\\
h_1(x)&:=\sum_j\frac{1}{xj}\omega(xj)\,,\quad h_2(x,y):= \sum_j\frac{1}{xj}\omega \left(\frac{|y|}{x
  j}\right)  \,.
\end{split}
\end{equation}
For any fixed pair $(x,y)$, each sum on $j$ involved in the definition
contains a finite number of nonzero terms, ranging from
$\tfrac1{2x}$ to $1/x$ for the summation in $h_1$, and from $|y|/x$ to
$2|y|/x$ for $h_2$.  So $h$ is a smooth function. 

In \cite{HB}, Section~3, it is shown how to derive Theorem~\ref{th:1} from
the definition \eqref{eq:h}.\footnote{Actually, it is proven how the
  function $h$ defined through \eqref{eq:h} can provide a
  representation of $\delta(n)$ for any weight function $\omega\in
  C_0^\infty(\R)$ supported on $[1/2,1]$.} Here we limit ourselves
 to providing some relevant properties of $h$,  
proved in Section~4 of \cite{HB}. In particular these properties imply that for small $x$,
  $h(x,y)$ behaves as the Dirac delta function in $y$

\begin{lemma}[Lemma 4 in \cite{HB}]\label{l:4}
  We have:
  \begin{enumerate}
\item    $h(x,y)=0$ for $x\ge 1$ and $|y|\le x/2$.
\item If $x\le 1$ and $|y|\le x/2$, then  $h(x,y)=h_1(x)$ and for any $m\geq 0$
  $$ 
  \frac{\partial^m h(x,y)}{\partial x^m} \lappr_m \frac{1}{x^{m+1}}\,.
  $$
  \item If $|y|\ge x/2$, then for any $m,n\geq 0$
  $$
  \frac{\partial^{m+n} h(x,y)}{\partial x^m\partial y^n} \lappr_{m,n}
  \frac{1}{x^{m+1}|y|^n}\,.
  $$
  \end{enumerate}
  \end{lemma}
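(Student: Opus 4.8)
\emph{Proof proposal.} The plan is to prove all three statements by elementary bookkeeping: for fixed $(x,y)$ with $x>0$ I will (a) determine which of the summands in \eqref{eq:h} are nonzero, and (b) bound each summand and its derivatives using the chain rule and the scaling structure of $\omega$. The two basic facts are that a summand of $h_1$ is nonzero only when $xj\in(1/2,1)$, i.e. $j\in(\tfrac1{2x},\tfrac1x)$, and a summand of $h_2(x,y)$ is nonzero only when $|y|/(xj)\in(1/2,1)$, i.e. $j\in(\tfrac{|y|}x,\tfrac{2|y|}x)$; as these intervals have lengths $\tfrac1{2x}$ and $\tfrac{|y|}x$, the two sums have at most $\tfrac1{2x}+1$ and $\tfrac{|y|}x+1$ nonzero terms, which is $O(x^{-1})$ when $x\le1$ and $O(|y|/x)$ when $|y|\ge x/2$.

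For part (1) and the first assertion of part (2): if $x\ge1$ then $(\tfrac1{2x},\tfrac1x)$ contains no positive integer, so $h_1(x)=0$; if $|y|\le x/2$ then $\tfrac{2|y|}x\le1$, so $(\tfrac{|y|}x,\tfrac{2|y|}x)$ contains no positive integer and $h_2(x,y)=0$. Hence $h(x,y)=0$ when $x\ge1$ and $|y|\le x/2$, and $h(x,y)=h_1(x)$ whenever $|y|\le x/2$. For the derivative bound I would write $h_1(x)=\sum_{j\ge1}g(xj)$ with $g(u):=\omega(u)/u$, noting $g\in C_0^\infty(\R)$ with $\supp g\subset[1/2,1]$ because $\omega$ is supported away from the origin. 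Differentiating the (locally finite) sum term by term, $\partial_x^m h_1(x)=\sum_j j^m g^{(m)}(xj)$; every nonzero term has $j<\tfrac1x$, hence $|j^m g^{(m)}(xj)|\le x^{-m}\|g^{(m)}\|_{L^\infty}$, and there are $O(x^{-1})$ of them for $x\le1$, so $|\partial_x^m h_1(x)|\lappr_m x^{-m-1}$.

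For part (3) I would handle $h_1$ and $h_2$ separately. Since $h_1$ is independent of $y$, $\partial_x^m\partial_y^n h_1=0$ for $n\ge1$, while for $n=0$ the estimate above (valid for all $x>0$, since $h_1\equiv0$ for $x\ge1$) gives $|\partial_x^m h_1(x)|\lappr_m x^{-m-1}=x^{-m-1}|y|^{-n}$. For $h_2$, using $h_2(x,y)=h_2(x,-y)$ and that the region $|y|\ge x/2$ excludes $y=0$, I may assume $y>0$. Set $\Phi(t,y):=t^{-1}\omega(y/t)$: it is smooth on $\{t>0\}\times\R$, supported in the open cone $\{t/2<y<t\}$, and homogeneous of degree $-1$. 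Consequently $\partial_t^m\partial_y^n\Phi$ is homogeneous of degree $-1-m-n$, and being continuous on the compact arc cut from the closed cone $\{t/2\le y\le t\}$ by the unit circle --- an arc contained in the open first quadrant --- it obeys $|\partial_t^m\partial_y^n\Phi(t,y)|\lappr_{m,n}(t^2+y^2)^{-(1+m+n)/2}\asymp t^{-m-1}y^{-n}$ on that cone (where $t\asymp y$) and vanishes off it. Now $h_2(x,y)=\sum_{j\ge1}\Phi(xj,y)$, so $\partial_x^m\partial_y^n h_2(x,y)=\sum_j j^m(\partial_t^m\partial_y^n\Phi)(xj,y)$, with $\partial_t$ the derivative in the first argument; a nonzero term forces $j\in(\tfrac yx,\tfrac{2y}x)$ and is $\lappr_{m,n}j^m(xj)^{-m-1}y^{-n}=x^{-m-1}j^{-1}y^{-n}$. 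Summing the $O(y/x)$ such terms and using $\sum_{y/x<j<2y/x}j^{-1}=O(1)$ gives $|\partial_x^m\partial_y^n h_2(x,y)|\lappr_{m,n}x^{-m-1}y^{-n}$. Adding the $h_1$ and $h_2$ estimates gives part (3).

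The only step needing a word of care is the term-by-term differentiation of the series in \eqref{eq:h}: this is justified because in a neighbourhood of any fixed $(x,y)$ with $x>0$ only finitely many summands are nonzero, so the series and each of its formal derivative series converge locally uniformly. Beyond that the argument is routine --- counting the $O(x^{-1})$, resp. $O(|y|/x)$, relevant indices $j$ and using the comparability $t\asymp y$ on the support of $y\mapsto\omega(y/t)$ to convert powers of $t$ into powers of $y$ --- so I do not expect a genuine obstacle; this reproduces the computation of \cite[Section~4]{HB}.
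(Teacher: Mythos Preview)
Your argument is correct. Note, however, that the present paper does not give its own proof of this lemma: it is quoted verbatim as ``Lemma~4 in \cite{HB}'' and the reader is referred to Section~4 of \cite{HB} for the justification. What you have written is precisely the computation carried out there --- identifying the ranges $j\in(\tfrac1{2x},\tfrac1x)$ and $j\in(\tfrac{|y|}x,\tfrac{2|y|}x)$ of contributing indices, differentiating term by term, and exploiting the homogeneity/scaling of the summands to convert $j$-powers into $x$- and $|y|$-powers --- so there is nothing to compare: your proposal matches the source argument.
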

Lemma~\ref{l:4} immediately implies
\begin{corollary}\lbl{c:h}
	For any $x,y\in\R_>\times\R$ we have $|h(x,y)|\lappr 1/x$.
\end{corollary}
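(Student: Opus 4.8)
The plan is to read the bound straight off Lemma~\ref{l:4}, splitting $\R_{>0}\times\R$ into the three regimes that appear in its statement and using only the $m=n=0$ cases. First, suppose $|y|\le x/2$. If moreover $x\ge1$, then part~(1) gives $h(x,y)=0$, so $|h(x,y)|\le 1/x$ trivially. If instead $x\le1$, then part~(2) with $m=0$ gives $h(x,y)=h_1(x)$ and $|h_1(x)|\lappr 1/x$. Second, suppose $|y|\ge x/2$; then part~(3) with $m=n=0$ gives $|h(x,y)|\lappr 1/x$. Since the sets $\{|y|\le x/2,\ x\ge1\}$, $\{|y|\le x/2,\ x\le1\}$ and $\{|y|\ge x/2\}$ cover all of $\R_{>0}\times\R$, the three estimates together yield $|h(x,y)|\lappr 1/x$ everywhere, which is the assertion.

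There is no real obstacle here: Lemma~\ref{l:4} already contains the required pointwise bounds, and the only thing to verify is that the regimes exhaust the half-plane. They do, with harmless overlap on the lines $x=1$ and $|y|=x/2$, where the claimed bound holds in each of the relevant cases, so no inconsistency can occur.

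Alternatively, one can argue directly from the definition \eqref{eq:h} without invoking Lemma~\ref{l:4}. In $h_1(x)=\sum_j (xj)^{-1}\omega(xj)$ only terms with $xj\in(1/2,1)$ survive, so $j$ ranges over at most $1/(2x)+1$ integers (over none when $x\ge1$, whence $h_1(x)=0$ there), and each surviving term is $\lappr 1$ because $(xj)^{-1}<2$ and $\omega$ is bounded; hence $|h_1(x)|\lappr 1/x$. Likewise, in $h_2(x,y)=\sum_j (xj)^{-1}\omega(|y|/(xj))$ only terms with $xj\in(|y|,2|y|)$ survive, so $j$ ranges over at most $|y|/x+1$ integers, each term is at most $|y|^{-1}\sup\omega$ since $(xj)^{-1}<|y|^{-1}$, and a nonzero term forces $|y|\ge x/2$, in which regime $|y|^{-1}(|y|/x+1)=x^{-1}+|y|^{-1}\lappr 1/x$; hence $|h_2(x,y)|\lappr 1/x$. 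Then $|h(x,y)|\le|h_1(x)|+|h_2(x,y)|\lappr 1/x$. Either route works, and both are immediate, so I would simply record the first one.
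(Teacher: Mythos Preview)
Your proof is correct and matches the paper's approach exactly: the paper simply states that Corollary~\ref{c:h} follows immediately from Lemma~\ref{l:4}, and your first argument spells out precisely this case split. The alternative direct argument from the definition is also valid but unnecessary here.
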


\begin{lemma}[Lemma 5 in \cite{HB}]\label{l:5}
  Let $m,n,N\ge 0$. Then
  $$
  \frac{\partial^{m+n} h(x,y)}{\partial x^m\partial y^n} \lappr_{N,m,n}
  \frac{1}{x^{1+m+n}}\left(\delta(n)x^N + \min\left\{1,\left(x/|y|\right)^N\right\} \right)\,.
  $$
  \end{lemma}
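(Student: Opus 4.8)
\textbf{Plan of proof for Lemma~\ref{l:5}.}
The statement refines Lemma~\ref{l:4} by capturing simultaneously, with a single formula, both the behaviour near $y=0$ (where $h$ collapses to $h_1(x)$, which is $\lesssim x^{-1}$ and whose $x$-derivatives only lose powers of $x$) and the fast decay in the regime $|y|\ge x/2$ (where the extra factor $(x/|y|)^n$ from $\partial_y^n$ should really be an arbitrarily high power $(x/|y|)^N$ thanks to the smoothness and compact support of $\omega$). The plan is to split into the two ranges $|y|\le x/2$ and $|y|>x/2$ and in each range improve the crude bound of Lemma~\ref{l:4} by exploiting cancellation/smoothness in the $j$-sums defining $h_1$ and $h_2$ in \eqref{eq:h}.

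\emph{Range $|y|\le x/2$.} Here Lemma~\ref{l:4}(2) gives $h(x,y)=h_1(x)$, so the $y$-derivatives vanish for $n\ge1$; this is exactly the term $\delta(n)x^N$ up to the prefactor, once we note that for $n=0$ Lemma~\ref{l:4}(2) gives $\partial_x^m h_1(x)\lesssim_m x^{-m-1}$, which is $\le x^{-1-m-n}\cdot x^N$ for $N$ absorbed appropriately — more precisely one checks $x^{-m-1}\lesssim x^{-1-m-n}(\delta(n)x^N+\min\{1,(x/|y|)^N\})$ using that in this range $x/|y|\ge2$, so $\min\{1,(x/|y|)^N\}=1$ and the bracket is $\ge 1$ when $n\ge1$ and $\ge x^N$-free when $n=0$ since $\delta(0)=1$. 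So this range is essentially bookkeeping on top of Lemma~\ref{l:4}.

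\emph{Range $|y|>x/2$.} This is the substantive case. Lemma~\ref{l:4}(3) already gives $\partial_x^m\partial_y^n h\lesssim_{m,n} x^{-m-1}|y|^{-n}$; I must upgrade the decay to the claimed $x^{-1-m-n}\min\{1,(x/|y|)^N\}$, i.e. to gain, for $|y|\ge x$, an arbitrary power $(x/|y|)^N$. In this range $h_1$ does not contribute to high-$y$ decay directly, but the point is to estimate $h_2(x,y)=\sum_j (xj)^{-1}\omega(|y|/(xj))$ and its derivatives: each nonzero term requires $|y|/(xj)\in(1/2,1)$, i.e. $j\asymp |y|/x$, so there are $\asymp |y|/x$ nonzero terms, each of size $\asymp x/|y|$, and differentiating in $y$ brings down factors $(xj)^{-1}\asymp x/|y|$; naively this reproduces Lemma~\ref{l:4}(3). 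The extra decay must come from treating the $j$-sum as a Riemann sum and applying the Poisson summation formula (or, equivalently, Euler–Maclaurin / repeated summation by parts in $j$): writing $g(\xi)=(x\xi)^{-1}\omega(|y|/(x\xi))$ we have $h_2(x,y)=\sum_{j}g(j)$, and since $g$ is $C^\infty$, supported in $\xi\asymp|y|/x$, with $\|g^{(k)}\|_{\infty}\lesssim_k (x/|y|)\cdot (x/|y|)^k$ and support length $\asymp |y|/x$, the non-zero-frequency terms in Poisson summation are $O_N((|y|/x)^{-N})$ for every $N$ — this is where compact support of $\omega$ is crucial — while the zero-frequency term $\int g(\xi)\,d\xi=\int (x\xi)^{-1}\omega(|y|/(x\xi))\,d\xi$ is a smooth function of $x$ alone with no $y$-dependence, hence is again a piece of $h_1$-type (indeed in \cite{HB} this is how $h_1$ is identified) that cancels against $h_1(x)$ in $h=h_1-h_2$ up to a negligible remainder. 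Differentiating in $x$ and $y$ before applying Poisson, and tracking that each $\partial_x$ costs $x^{-1}$ and each $\partial_y$ costs $|y|^{-1}$ on every surviving term, then yields the stated bound.

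\emph{Main obstacle.} The delicate point is the last step: making precise, with uniform constants, that after subtracting the zero-frequency (i.e. integral) contribution the remaining part of $h=h_1-h_2$ genuinely decays like $(x/|y|)^N$ for all $N$, and that the integral contribution is matched by $h_1$ to within $O_N(x^N)$ (the $\delta(n)x^N$ term), uniformly over $x\le1$; this is exactly the content of Section~4 of \cite{HB}, so in practice I would cite that computation and only verify that differentiating the Poisson-summation identity $m$ times in $x$ and $n$ times in $y$ produces precisely the claimed powers, which is the routine part. I do not expect to need any new idea beyond Poisson summation plus the scaling $j\asymp|y|/x$; the bulk of the work is the careful bookkeeping of the constants $C_{N,m,n}$ and the verification that the two ranges glue together at $|y|\asymp x/2$.
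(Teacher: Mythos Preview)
The paper does not supply its own proof of this lemma: it is quoted verbatim as ``Lemma~5 in \cite{HB}'' among the properties of $h$ ``proved in Section~4 of \cite{HB}'', and no argument is given in the present text. There is therefore nothing in the paper to compare your proposal against.

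That said, your plan is the right one and is essentially Heath-Brown's own argument. The two key observations you isolate are exactly the mechanism behind the bound: (i) on $|y|\le x/2$ the identity $h=h_1(x)$ reduces everything to Lemma~\ref{l:4}(1)--(2); (ii) on $|y|>x/2$ one applies Poisson summation in $j$ to each of $h_1(x)=\sum_j (xj)^{-1}\omega(xj)$ and $h_2(x,y)=\sum_j (xj)^{-1}\omega(|y|/(xj))$, and the zero--frequency contributions coincide (both equal $x^{-1}\int \omega(s)s^{-1}\,ds$, as the substitution $s=xj$ resp.\ $s=|y|/(xj)$ shows), so $h=h_1-h_2$ is the difference of the nonzero--frequency tails, which are $O_N(x^N)$ and $O_N((x/|y|)^N)$ respectively. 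The $\delta(n)x^N$ term records precisely the $h_1$-tail, which survives only when $n=0$ since $h_1$ is independent of $y$. Your bookkeeping of the powers of $x$ and $|y|$ under $\partial_x^m\partial_y^n$ is correct, and the gluing at $|y|\asymp x/2$ is routine since both estimates overlap there. No new idea beyond Poisson summation and the scaling $j\asymp 1/x$ (for $h_1$) and $j\asymp |y|/x$ (for $h_2$) is needed, as you anticipated.
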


\begin{lemma}[Lemma 6 in \cite{HB}]\label{l:6}
  Fix $ X\in \R_{>0}$ and let $x<C\min\left\{1,X\right\}$ for $C>0$. Then for any $N\ge0$, 
  $$
 \int_{-X}^X h(x,y)\,dy = 1 + O_{N,C}\left(Xx^{N-1}\right) +
 O_{N,C}\left(\frac{x^N}{X^N} \right)\,.
  $$
  \end{lemma}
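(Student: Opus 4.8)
The plan is to prove Lemma~\ref{l:6} by substituting the definition \eqref{eq:h} of $h=h_1-h_2$ and integrating the two pieces separately over $[-X,X]$. First I would dispose of the $h_1$ contribution: since $h_1(x)$ does not depend on $y$, we get $\int_{-X}^X h_1(x)\,dy = 2X h_1(x)$, and by Lemma~\ref{l:4}(2) (or directly from the definition, $h_1(x)=\sum_j (xj)^{-1}\omega(xj)$, a sum of $O(1)$ terms each of size $O(1/x)$ when $x\le 1$, which holds since $x<C\min\{1,X\}$) we have $h_1(x)\lappr 1/x$, hence this term is $O(X/x)=O(Xx^{N-1}\cdot x^{2-N})$ — but this crude bound is not good enough, so instead I would keep $2Xh_1(x)$ exactly and show it combines with the main term of the $h_2$ integral.

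For the $h_2$ piece, write $\int_{-X}^X h_2(x,y)\,dy = 2\sum_j \frac{1}{xj}\int_0^X \omega\!\left(\frac{y}{xj}\right)dy$, using that $\omega$ and the integrand are even in $y$. In each term substitute $u=y/(xj)$, so $\int_0^X \omega(y/(xj))\,dy = xj\int_0^{X/(xj)}\omega(u)\,du$, giving $2\sum_j \int_0^{X/(xj)}\omega(u)\,du$. Now $\omega$ is supported on $(1/2,1)$ with total integral $1$. Thus $\int_0^{X/(xj)}\omega(u)\,du = 1$ whenever $X/(xj)\ge 1$, i.e. $j\le X/x$; it equals $0$ when $X/(xj)\le 1/2$, i.e. $j\ge 2X/x$; and it lies in $[0,1]$ for the $O(X/x)$ intermediate values of $j$. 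Hence $\int_{-X}^X h_2\,dy = 2\#\{j\ge 1: j\le X/x\} + (\text{error from transitional } j)$. Meanwhile $2Xh_1(x) = 2\sum_j \frac{X}{xj}\omega(xj)$; here the relevant $j$ satisfy $1/(2x)\le j\le 1/x$, and I would compare $2Xh_1(x)$ against the ``full'' count coming from $h_2$. The cleanest route is actually to note that Lemma~\ref{l:5} with $n=0$ gives $h(x,y)\lappr x^{-1}(x^N + \min\{1,(x/|y|)^N\})$, so for $|y|$ away from $0$ the integrand decays, and to split $[-X,X]$ at $|y|\sim x$.

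Concretely I would argue: by Lemma~\ref{l:4}, for $|y|\le x/2$ we have $h(x,y)=h_1(x)$, contributing $x h_1(x) = 1 + O(x^N)$ (this identity $\int_{|y|\le x/2} h_1 = x h_1(x)$ together with the known asymptotic $xh_1(x)\to 1$; more precisely $h_1(x) = x^{-1} + O(x^{N-1})$ follows from Poisson summation / Euler–Maclaurin applied to $\sum_j (xj)^{-1}\omega(xj)$ since $\int_0^\infty t^{-1}\omega(t)\,dt$... ) — actually the sharp statement $x h_1(x)=1+O_N(x^N)$ is exactly what the $y$-independent part must satisfy for the lemma to hold, and it is established in Section~4 of \cite{HB}, so I would quote it. Then on $x/2<|y|\le X$ use Lemma~\ref{l:5} with large $N$: $\int_{x/2<|y|\le X} |h(x,y)|\,dy \lappr \int_{x/2}^X x^{-1}\big(x^N + (x/y)^N\big)\,dy \lappr x^{N-1}X + x^{N-1}\int_{x/2}^X (x/y)^N\,dy$. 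Wait — this would only give $O(Xx^{N-1})$, missing the claimed second error term $O(x^N/X^N)$; that term must come from the transitional $j$'s in the $h_2$ sum near $j\approx X/x$, where the partial integral of $\omega$ is strictly between $0$ and $1$. So the precise bookkeeping is: $\int_{-X}^X h\,dy = 1 - 2\sum_{j} \big(\mathbf{1}[X/(xj)\ge 1] - \int_0^{X/(xj)}\omega\big) + O(Xx^{N-1})$, and the sum is supported on $j\in[X/(2x), X/x]$ where each summand is $O(1)$... that still gives $O(X/x)$, not $O((x/X)^N)$. The resolution: for $j$ slightly below $X/x$, $X/(xj)$ is slightly above $1$, and $1-\int_0^{X/(xj)}\omega = 0$ since $\omega$ is supported in $(1/2,1)$ — so there is actually \emph{no} transitional range on the upper side, and the genuine error comes only from the smoothness/tail structure, matching the two stated error terms. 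I would therefore reorganize as: $\int_{-X}^X h_2\,dy = 2\sum_{j\ge 1} g(X/(xj))$ where $g(s)=\int_0^s\omega$, and $g(s)=\mathbf 1[s>1]$ exactly outside $s\in(1/2,1)$, so $\int h_2 = 2\lfloor X/x\rfloor + 2\sum_{j\in(X/x,2X/x)} g(X/(xj))$ — no wait $g(X/(xj))\ne 1$ needs $X/(xj)<1$ i.e. $j>X/x$, and $g=0$ needs $j>2X/x$; the number of such $j$ is $\le X/x$, each with $g\le 1$, giving the dominant-looking $O(X/x)$ which is then \emph{cancelled} against $2Xh_1(x)$ via a careful comparison.

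The main obstacle, then, is precisely this cancellation bookkeeping between $2Xh_1(x)$ and the partial tails of the $h_2$ sum; the individual pieces are each of size $O(X/x)$, which is far larger than the claimed error, so one cannot bound them separately. The honest plan is to avoid re-deriving this and instead cite the computation of Section~4 of \cite{HB} (which establishes exactly Lemma~\ref{l:6}), presenting here only the reduction: expand $h=h_1-h_2$, change variables in the $h_2$-integral to turn it into a Riemann-type sum of $\int_0^{X/(xj)}\omega$, observe that $\omega$ integrates to $1$ and is supported in $(1/2,1)$ so each term is $0$ or $1$ except for $O(1)$ transitional indices, and match the resulting main term against $x h_1(x) = 1 + O_N(x^N)$ (Lemma~\ref{l:4}(2) sharpened, as in \cite{HB}). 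The two error terms $O(Xx^{N-1})$ and $O(x^N X^{-N})$ arise respectively from the Euler–Maclaurin/Poisson remainder in summing $\omega$ against the $1/j$ weight (contributing powers of $x$ times $X$) and from the mismatch at the scale $j\sim X/x$ where the $h_2$-sum transitions (contributing the ratio $(x/X)^N$ through repeated integration by parts in $\omega$), under the standing hypothesis $x<C\min\{1,X\}$ which guarantees the transitional indices actually exist and the sums are nonempty.
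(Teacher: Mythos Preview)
The paper does not prove this lemma; it is quoted from Section~4 of \cite{HB} without argument, so there is no in-paper proof to compare against.

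Your reduction --- split $h=h_1-h_2$ and compute $\int_{-X}^X h_2\,dy=2\sum_{j\ge1}\int_0^{X/(xj)}\omega$ via the substitution $u=y/(xj)$ --- is correct and is the route taken in \cite{HB}. But the argument then goes wrong at the claim ``$xh_1(x)=1+O_N(x^N)$''. Poisson summation applied to $h_1(x)=\sum_{j\in\Z}\psi(xj)$ with $\psi(t)=t^{-1}\omega(t)\in C_0^\infty(1/2,1)$ gives $h_1(x)=c_1/x+O_N(x^N)$ where $c_1=\int t^{-1}\omega(t)\,dt$; for the paper's specific $\omega$ one checks $c_1\in(1,2)$, so $c_1\ne1$. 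Consequently the plan to read off the main term $1$ from the region $|y|\le x/2$ (where $h=h_1$) cannot work: that region contributes $xh_1(x)\to c_1$, and the region $x/2<|y|\le X$ contributes the nonzero constant $1-c_1$ at leading order, not something negligible --- which is also why the crude Lemma~\ref{l:5} bound you tried there only yields $O(1)$.

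What actually happens is this. Put $G(s)=\int_0^{1/s}\omega$, so $G\equiv1$ on $(0,1]$, $G\equiv0$ on $[2,\infty)$, and $\int_{-X}^X h_2\,dy=2\sum_{j\ge1}G(xj/X)$. The even extension $\tilde G(s)=G(|s|)$ lies in $C_0^\infty$ (all derivatives of $G$ vanish on $(0,1)$), and Poisson gives $\sum_{j\in\Z}\tilde G(jx/X)=2c_1X/x+O_N((x/X)^N)$ since $\int\tilde G=2\int_0^\infty G=2c_1$. But the left side equals $2\sum_{j\ge1}G(xj/X)+G(0)$ and $G(0)=1$; hence $\int_{-X}^X h_2\,dy=2c_1X/x-1+O((x/X)^N)$. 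Subtracting this from $2Xh_1(x)=2c_1X/x+O(Xx^{N-1})$ gives the lemma. The main term $1$ is the $j=0$ contribution $G(0)$ in the $h_2$ sum, not a property of $h_1$; this is precisely the ``cancellation bookkeeping'' you flag but do not resolve.
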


\begin{lemma}[Lemma 8 in \cite{HB}]\label{l:8}
  Fix $X\in \R_{>0}$ and $n\in \N$. Let $x<C\min\left\{1, X\right\} $ for
  $C>0$. Then 
  $$
 \int_{-X}^X y^nh(x,y)\,dy \lappr_{N,C} X^n\left(Xx^{N-1} +
\frac{x^N}{X^N} \right)\,.
  $$
\end{lemma}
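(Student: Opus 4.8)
The plan is to evaluate the integral almost exactly, straight from the definition $h=h_1-h_2$ in \eqref{eq:h}, by means of Poisson summation, so that the main terms produced by $h_1$ and by $h_2$ cancel and only a rapidly decaying remainder survives. Since $h_1(x)$ does not depend on $y$ and $h_2(x,y)$ depends on $y$ only through $|y|$, the function $h(x,\cdot)$ is even; hence the integral vanishes for odd $n$, and the value $n=0$ (if it is admitted at all) follows from the trivial bound $|h|\lappr 1/x$ of Corollary~\ref{c:h}. So it suffices to treat even $n\ge 2$.

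For such $n$, write $\int_{-X}^X=2\int_0^X$ and insert $h=h_1-h_2$. The $h_1$-part contributes $\tfrac{2}{n+1}h_1(x)X^{n+1}$, while in the $h_2$-part the substitution $y=(xj)u$ in each summand turns $2\int_0^X y^nh_2(x,y)\,dy$ into $2\sum_{j\ge1}(xj)^n\Omega_n\!\big(\tfrac{X}{xj}\big)$, where $\Omega_n(s):=\int_0^s u^n\omega(u)\,du$, so that $\Omega_n(s)=m_n:=\int u^n\omega(u)\,du$ for $s\ge1$ and $\Omega_n(s)=0$ for $s\le\tfrac12$. Thus, setting $G(t):=t^n\Omega_n(X/t)$ for $t>0$ and $G(t):=0$ for $t\le0$,
\[
\int_{-X}^X y^nh(x,y)\,dy=\frac{2X^{n+1}}{n+1}\,h_1(x)-2\sum_{j\ge1}G(xj).
\]

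Next I would apply the Poisson summation formula $\sum_{j\in\Z}f(xj)=\tfrac1x\sum_{k\in\Z}\widehat f(k/x)$ to both sums (each summand vanishes for $j\le0$, so $\sum_{j\ge1}=\sum_{j\in\Z}$). Since $\tfrac1{xj}\omega(xj)=g_1(xj)$ with $g_1(t):=\omega(t)/t\in C_0^\infty(\R)$ supported away from $0$, its Fourier transform is Schwartz and $h_1(x)=\tfrac{c_1}{x}+O_M(x^{M-1})$ for every $M$, where $c_1:=\widehat{g_1}(0)=\int\omega(u)/u\,du$. The function $G$ lies in $C^{n-1}(\R)$, is smooth away from $0$ and compactly supported, and its $n$-th derivative has a single jump discontinuity, of height $n!\,m_n$, at $t=0$ (all derivatives of $G$ match at $t=X$ and $t=2X$ because $\omega$ vanishes to infinite order at $\tfrac12$ and $1$); subtracting from $G^{(n)}$ the model $n!\,m_n\,H(t)\chi(t)$, with $H$ the Heaviside function and $\chi\in C_0^\infty$, $\chi\equiv1$ near the support, leaves a $C_0^\infty$ function, whence $\widehat G(\xi)=\dfrac{n!\,m_n}{(2\pi i\xi)^{n+1}}+O_M(|\xi|^{-n-M})$ for every $M$, while the substitution $s=X/t$ followed by integration by parts gives $\widehat G(0)=\int_\R G=\tfrac{c_1}{n+1}X^{n+1}$. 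Hence $\sum_{j\ge1}G(xj)=\tfrac{c_1X^{n+1}}{(n+1)x}+\tfrac1x\sum_{k\ne0}\widehat G(k/x)$, and the two leading terms cancel exactly, leaving $\tfrac{2X^{n+1}}{n+1}O_M(x^M)-\tfrac2x\sum_{k\ne0}\widehat G(k/x)$.

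The decisive point is a parity cancellation in the last sum: its leading contribution is a constant multiple of $\sum_{k\ne0}k^{-(n+1)}$, which is $0$ since $n+1$ is odd, so $\sum_{k\ne0}\widehat G(k/x)=O_M(x^{n+M})$ and the whole remainder is $O_M\!\big(X^{n+1}x^M+x^{n+M-1}\big)$ for every $M$; taking $M=N$ and distinguishing $X\ge1$ from $X<1$, one uses $x<C\min\{1,X\}$ to verify that $X^{n+1}x^N+x^{n+N-1}\lappr_{N,C}X^n\big(Xx^{N-1}+x^NX^{-N}\big)$, which is the claim (for $N\le1$ one argues in the same way or falls back on $|h|\lappr1/x$). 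I expect this parity observation to be the crux: a naive regionwise estimate of $|h|$ fails, because near $y=0$ one has $h(x,y)=h_1(x)\asymp1/x$ and hence $\int_{|y|<x/2}|y|^nh_1(x)\,dy\asymp x^n$, which exceeds the target once $N$ is large, so one must genuinely exploit the cancellation between the positive spike of $h$ at $0$ and its negative side lobes, and it is exactly the vanishing of $\sum_{k\ne0}k^{-(n+1)}$ for even $n$ that upgrades the error from $x^n$ to $x^{n+N-1}$.
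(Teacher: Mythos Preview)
The paper itself does not prove this lemma; it is only quoted from \cite{HB}. So there is no in-paper argument to compare against. Your Poisson-summation approach is in the spirit of Heath-Brown's treatment of $h$, and your central insight is correct: for even $n$ the leading contribution to $\sum_{k\ne0}\widehat G(k/x)$ is a multiple of $\sum_{k\ne0}k^{-(n+1)}=0$, and this parity cancellation is precisely what pushes the error from the naive $O(x^n)$ down to an arbitrarily high power of $x$.

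There is, however, one point that needs repair: uniformity in $X$. When you write
\[
\widehat G(\xi)=\frac{n!\,m_n}{(2\pi i\xi)^{n+1}}+O_M\big(|\xi|^{-n-M}\big),
\]
the implied constant depends on $X$. Indeed the $C_0^\infty$ remainder $G^{(n)}-n!\,m_n\,H\chi$ lives on an interval of length $\asymp X$, and differentiating the identity $G^{(n)}(t)=G_1^{(n)}(t/X)$ shows that its $k$-th derivative scales like $X^{-k}$; so for small $X$ the Schwartz bounds on its transform blow up. The clean fix is to record once and for all the scaling $G(t)=X^nG_1(t/X)$ with $G_1$ the function for $X=1$; then $\widehat G(\xi)=X^{n+1}\widehat{G_1}(X\xi)$ with $G_1$ fixed, and after the parity cancellation one gets
\[
\frac{1}{x}\sum_{k\ne0}\widehat G(k/x)=O_M\big(x^{\,n+M-1}X^{\,1-M}\big)
\]
with an absolute constant. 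Since $x<C\min\{1,X\}$ gives $x/X<C$, one has
\[
x^{\,n+M-1}X^{\,1-M}=(x/X)^{n-1}\,X^{\,n-M}x^{\,M}\ \lappr_{C,n}\ X^{\,n-M}x^{\,M},
\]
which for $M=N$ is the second target term $X^n\cdot x^N/X^N$. (Note the extra factor $X^{1-M}$, absent in your write-up, is essential when $X<1$.) There is also a harmless index slip: the $h_1$-contribution is $O_M(X^{n+1}x^{\,M-1})$, not $O_M(X^{n+1}x^{\,M})$, consistent with your earlier $h_1(x)=c_1/x+O_M(x^{M-1})$; this already matches the first target term $X^n\cdot Xx^{N-1}$ when $M=N$.
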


The previous results are used to prove the key Lemma~9 of \cite{HB},
which can be extended to the following
\begin{lemma}[{Lemma 9 of \cite{HB}}]\label{l:9} Let an
  integrable function $f$ be $C^{M-1}$-smooth, $M\geq 1$, and be such that
  $f^{(M-1)}$ is absolutely continuous in $[-1,1]$. Let  $x\le C$ with
  $C>0$, then, for any ${\gamma_1}>0$, 
  \begin{equation*}
\int_\R f(y) h(x,y) \, dy = f(0) + O_{M,C,{\gamma_1}}\left(x^{M-{\gamma_1}}
\left(\frac{1}{X}\int_{-X}^{X}|f^{(M)}(y)|\,dy+\|f\|_{L_1}\right)\right)\,,
  \end{equation*}
  where $X:=\min\left\{1,x^{1-\gamma_1/(M+1)}\right\}$
\end{lemma}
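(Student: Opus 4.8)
The plan is to prove the extended Lemma~9 of \cite{HB} (our Lemma~\ref{l:9}) by iterated integration by parts against the $x$-mollifier $h(x,\cdot)$, using the moment estimates of Lemmas~\ref{l:6} and~\ref{l:8} to control the error after $M$ integrations, exactly as Heath-Brown does, but keeping careful track of the dependence on $f$ through the two norms $\|f\|_{L_1}$ and $\frac1X\int_{-X}^X|f^{(M)}|$.

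First I would reduce to a local statement. Since $h(x,y)=0$ for $|y|\ge\max(1,2|y|)$ forces nothing, but more usefully Lemma~\ref{l:4}(3) gives $h(x,y)\lappr x/|y|^2\cdot\frac1x\cdot\frac1{|y|}$-type decay; precisely, for $|y|\ge x/2$ and any $K$ we have $h(x,y)\lappr_K x^{-1}(x/|y|)^K$. Split $\int_\R f(y)h(x,y)\,dy = \int_{|y|\le X} + \int_{|y|>X}$ with $X=\min\{1,x^{1-\gamma_1/(M+1)}\}$. On the tail $|y|>X$, using the decay of $h$ from Lemma~\ref{l:4}(3) with a large exponent $K=K(M,\gamma_1)$ and $|f(y)|\le \|f\|_{L_1}$-type crude bounds (or rather $\int_{|y|>X}|f|\le\|f\|_{L_1}$ times $\sup_{|y|>X}h$), one gets a contribution $\lappr_{K} x^{-1}(x/X)^K\|f\|_{L_1}$, and since $x/X = x^{\gamma_1/(M+1)}\le 1$ for $x\le C$ after adjusting constants, choosing $K$ large makes this $O(x^{M-\gamma_1}\|f\|_{L_1})$. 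So the main term comes from $|y|\le X\le 1$.

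On $|y|\le X$ I would Taylor-expand $f$ at $0$ to order $M$: $f(y)=\sum_{k=0}^{M-1}\frac{f^{(k)}(0)}{k!}y^k + R_M(y)$ with $R_M(y)=\frac1{(M-1)!}\int_0^y (y-s)^{M-1} f^{(M)}(s)\,ds$, valid since $f^{(M-1)}$ is absolutely continuous on $[-1,1]\supseteq[-X,X]$. Then $\int_{|y|\le X} f(y)h(x,y)\,dy = f(0)\int_{-X}^X h + \sum_{k=1}^{M-1}\frac{f^{(k)}(0)}{k!}\int_{-X}^X y^k h + \int_{-X}^X R_M(y)h\,dy$. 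Lemma~\ref{l:6} gives $\int_{-X}^X h = 1 + O_N(Xx^{N-1}) + O_N((x/X)^N)$; with the choice of $X$ and $N$ large both errors are $O(x^{M-\gamma_1})$, and this multiplies $f(0)$. Wait — I need $f(0)$ alone, not $f(0)$ times a norm; this is fine since the error terms here do not see $f$ at all, so they contribute $O(x^{M-\gamma_1})|f(0)|$ which is absorbed into $O(x^{M-\gamma_1}\|f\|_{L_1})$ after noting $|f(0)|\lappr \|f\|_{L_1}+\int_{-1}^1|f^{(M)}|$ via the fundamental theorem of calculus (or, more simply, just track $f(0)$ separately — Heath-Brown's statement has an unadorned $f(0)$, so we may harmlessly bound $|f(0)|$ by the RHS norms). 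Each moment term $\int_{-X}^X y^k h$ with $1\le k\le M-1$ is $\lappr_N X^k(Xx^{N-1}+(x/X)^N)$ by Lemma~\ref{l:8}; since $X\le 1$ and $N$ is large these are again $O(x^{M-\gamma_1})$, and they multiply $|f^{(k)}(0)|\lappr \|f\|_{L_1}+\int_{-1}^1|f^{(M)}|$ (again by a Taylor/Sobolev-type bound on the interval $[-1,1]$). Finally the remainder term: $|R_M(y)|\le \frac{|y|^{M-1}}{(M-1)!}\int_{-|y|}^{|y|}|f^{(M)}(s)|\,ds \le \frac{X^{M-1}}{(M-1)!}\int_{-X}^X|f^{(M)}|$ for $|y|\le X$, so $|\int_{-X}^X R_M(y)h\,dy| \le \frac{X^{M-1}}{(M-1)!}\Big(\int_{-X}^X|f^{(M)}|\Big)\int_{-X}^X|h|$, and $\int_{-X}^X|h|\,dy \lappr 1$ (from Corollary~\ref{c:h}, $|h|\lappr 1/x$, but that's $X/x$ which for $X<x$ is fine, while for $X$ near $x$ use Lemma~\ref{l:6} giving $\int|h|\approx 1$; either way $\int_{-X}^X|h| = O(1)$ because $X\le x^{1-\gamma_1/(M+1)}$ and for small $x$, $X/x = x^{-\gamma_1/(M+1)}$ which is \emph{not} bounded — so here I must be more careful and use that $h\ge0$ with $\int_{-X}^X h \le \int_{-2|y|\le x}h + (\text{tail}) = O(1)$ by Lemma~\ref{l:6} applied on a slightly larger window, exploiting $h(x,y)=h_1(x)$ only for $|y|\le x/2$ where $h_1(x)\lappr 1/x$ but the measure of that set is $x$, giving $O(1)$, plus the part $|y|>x/2$ where $h\lappr x^{-1}\min(1,(x/|y|)^K)$ integrates to $O(1)$). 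Hence $X^{M-1}\cdot\frac1X\int_{-X}^X|f^{(M)}| \cdot X = X^M \cdot \frac1X\int_{-X}^X|f^{(M)}|$ and $X^M\le x^{M-M\gamma_1/(M+1)}\le x^{M-\gamma_1}$, giving the claimed bound.

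The main obstacle — and the only place real care is needed beyond bookkeeping — is the uniform bound $\int_{-X}^X |h(x,y)|\,dy = O(1)$ when $X$ can exceed $x$, and making sure the error terms genuinely collapse to $O(x^{M-\gamma_1})$ after substituting $X=\min\{1,x^{1-\gamma_1/(M+1)}\}$ and taking the auxiliary exponents $N, K$ large enough (large in terms of $M$ and $\gamma_1$): one needs $X x^{N-1}\le x^{M-\gamma_1}$, i.e. $x^{N-1-\gamma_1/(M+1)}\le x^{M-\gamma_1}$, which holds for $x\le1$ once $N\ge M+1$, and $(x/X)^N = x^{N\gamma_1/(M+1)}\le x^{M-\gamma_1}$ once $N\gamma_1/(M+1)\ge M-\gamma_1$, i.e. $N\ge (M+1)(M-\gamma_1)/\gamma_1$; so $N$ depends on $M$ and $\gamma_1$, consistent with the statement's $O_{M,C,\gamma_1}$. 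The extension beyond Heath-Brown's original $f\in C_0^\infty$ is purely in allowing $f$ merely $C^{M-1}$ with $f^{(M-1)}$ absolutely continuous on $[-1,1]$ and integrable globally, which is exactly what the Taylor-with-integral-remainder argument and the $L_1$-tail estimate need — no new idea is required, only the observation that every step above used nothing more than these hypotheses.
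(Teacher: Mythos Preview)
Your outline is essentially the paper's proof: split the integral at $|y|=X$, kill the tail with the decay of $h$ (the paper cites Lemma~\ref{l:5}, you cite Lemma~\ref{l:4}(3) --- same content), Taylor-expand on $[-X,X]$, handle the moments with Lemmas~\ref{l:6} and~\ref{l:8}, and choose the auxiliary exponent $N$ large in terms of $M,\gamma_1$ (the paper takes $N=M(M+1)/\gamma_1$ for $x\le1$).

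The one place you diverge is the Taylor remainder: you spend effort arguing that $\int_{-X}^X|h(x,y)|\,dy=O(1)$. The paper does not do this; it simply uses the crude pointwise bound $|h|\lappr 1/x$ from Corollary~\ref{c:h}, which gives the remainder as
\[
O_M\!\left(\frac{X^M}{x}\int_{-X}^{X}|f^{(M)}(y)|\,dy\right)
=O_M\!\left(\frac{X^{M+1}}{x}\cdot\frac1X\int_{-X}^{X}|f^{(M)}|\right),
\]
and with $X=x^{1-\gamma_1/(M+1)}$ one has $X^{M+1}/x=x^{M-\gamma_1}$ exactly. So the detour through $\int|h|=O(1)$ is unnecessary (though your sketch for it is correct, and in fact $h\ge0$ by Theorem~\ref{th:1}, so Lemma~\ref{l:6} gives it directly).

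You also flag, correctly, the bookkeeping issue of controlling $|f^{(j)}(0)|$ for $0\le j<M$ by the two norms in the final statement; the paper writes these intermediate errors with $\|f\|_{j,0}$ in \eqref{eq:l9-3}--\eqref{eq:l9-4} and then passes to the conclusion without comment. An elementary interpolation on $[-X,X]$ (of Landau--Kolmogorov type) closes this, and the powers of $X$ absorb cleanly once $N$ is chosen as above --- so neither argument has a real gap here, just an omitted routine step.
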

    {\it Proof.} By
    Lemma~\ref{l:5},  
    $h(x,y) \lappr_M x^M$ if $|y|\ge X$, so that the integral on the tails can be
    bounded by
    \begin{equation}\label{eq:l9-1}
    \int_{|y|\ge X} f(y) h(x,y) \, dy \lappr_M x^M \int_\R |f(y)|\lappr_M
    x^M \|f\|_{L_1}\,.
    \end{equation}

    For the integral for $|y|<X$, instead, we take the Taylor
    expansion of $f(y)$ around zero and get
    \begin{equation}\label{eq:l9-2}
    \begin{split}
        \int_{-X}^X& f(y) h(x,y) \, dy  \\&
        = \sum_{j=0}^{M-1}
        \frac{f^{(j) }(0)}{j!}\int_{-X}^X y^j h(x,y)\, dy +
        O_M\left(\frac{X^{M}}{x} \int_{-X}^{X}|f^{(M)}(y)|\,dy\right)\,,
        \end{split}
    \end{equation}
    by Corollary~\ref{c:h}. Then, we fix $N$ and use Lemma~\ref{l:6} to
    get
    \begin{equation}\label{eq:l9-3}
        f(0) \int_{-X}^X  h(x,y)\, dy = f(0) + 
         O_{N,C}\left(\|f\|_{0,0}\left(Xx^{N-1} 
        +\frac{x^N}{X^N}\right)\right),
    \end{equation}
    while, from Lemma~\ref{l:8}, for any $j>0$ we have
    \begin{equation}\label{eq:l9-4}
\left|      \frac{f^{(j) }(0)}{j!}\int_{-X}^X y^j h(x,y)\, dy\right|
   \lappr_{N,j,C}   \|f\|_{j,0}X^j\left(Xx^{N-1} 
   +\frac{x^N}{X^N}\right)\,.
    \end{equation}
    Putting together \eqref{eq:l9-1}--\eqref{eq:l9-4}, putting $N=M(M+1)/\gamma_1$
    for $x\le 1$,  $N=M$ for $x>1$, 
    and using the definition of $X$ we obtain the proof.
    \qed

    \subsection{The approximation for $I_q(0)$}
We have the following proposition, which replaces Lemmas~11, 13 and
Theorem~3 of \cite{HB},  not assuming  that  $0\notin\supp w$:

    \begin{proposition}\lbl{l:I_q(0)=}
   Let $F$ be the quadric $F_0$, see \eqref{F0}, (so $d$ is an even
   number). Let  $q\le C L$, with $C> 0$. Then for any $1\le M<
   d/2-1$
    \be\lbl{I_q(0)=}
      I_q(0) = L^{d} \sigma_\infty(F,w{ , m})      +
      O_{M,C,{\gamma_1}{ ,d, m}}\left(q^ML^{d-M+{\gamma_1}}\|
      w\|_{M,d-1} + \|
      w\|_{0,d+1}\right) \,.
      \ee
{       If $1\le M=d/2-1$, { $m\neq 0$ and $C\le m/2$, } instead, we have}
      \begin{equation*}
      I_q(0) = L^{d} \sigma_\infty(F,w{ , m})      +
      { |\log m|}O_{C,{\gamma_1}{ ,d, m}}\left(q^ML^{d-M+{\gamma_1}}\|
      w\|_{M,d-1} + \|
      w\|_{0,d+1}\right) \,.
      \end{equation*}
      Finally, if $1\le M=d/2-1$ and $m=0$, we have
      \begin{equation*}
      \begin{split}
      I_q(0)& = L^{d} \sigma_\infty(F,w{ , m})  \\    
      &+
      O_{C,{\gamma_1}{ ,d, m}}\left(q^ML^{d-M+{\gamma_1}}\|
      w\|_{M,d-1}|\log(q/L)| + \|
      w\|_{0,d+1}\right) \,.
      \end{split}
      \end{equation*}
      
    \end{proposition}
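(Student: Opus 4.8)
The plan is to peel off the $\vv$-direction transverse to the quadrics and reduce $I_q(0)$ to a one-dimensional convolution of $h(x,\cdot)$ with the function $m'\mapsto\sigma_\infty(F,w,m')$, to which the extended Lemma~\ref{l:9} applies directly. The regularity and decay of that function, which determine the three regimes of $M$, are exactly the content of the Appendix.

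\emph{Reduction.} Substituting $\z=L\vv$ in \eqref{eq:I'} with $\vc=0$ and using $F^{mL^2}(L\vv)=L^2F^m(\vv)$ gives
\[
I_q(0)=L^d\int_{\R^d}w(\vv)\,h\!\left(x,F^m(\vv)\right)d\vv,\qquad x:=\frac qL\le C .
\]
Since $h(x,F^m(\vv))=h(x,t-m)$ is constant on each level set $\{F=t\}=\Sigma_t$, the coarea formula with $\nabla F(\vv)=A\vv$ gives
\[
\int_{\R^d}w(\vv)h(x,F^m(\vv))\,d\vv=\int_\R h(x,t-m)\Bigl(\int_{\Sigma_t}w(\z)|A\z|^{-1}dz|_{\Sigma_t}\Bigr)dt=\int_\R h(x,s)\,\Psi(s)\,ds ,
\]
where $\Psi(s):=\sigma_\infty(F,w,m+s)$ (see \eqref{lead_term}) and $s=t-m$. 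Thus $I_q(0)=L^d\int_\R h(x,s)\Psi(s)\,ds$, and the value $\Psi(0)=\sigma_\infty(F,w,m)$ will contribute the asserted main term $L^d\sigma_\infty(F,w,m)$.

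\emph{Applying Lemma~\ref{l:9}.} By the Appendix, $\Psi$ is integrable with $\|\Psi\|_{L_1}\lappr_d\|w\|_{0,d+1}$, it is $C^{d/2-2}$ with $|\Psi^{(j)}(s)|\lappr_d\|w\|_{j,d-1}$ for $j\le d/2-2$, and $\Psi^{(d/2-2)}$ is absolutely continuous on $[-1,1]$, with $\Psi^{(d/2-1)}$ existing for $s\ne-m$ and $|\Psi^{(d/2-1)}(s)|\lappr_d\|w\|_{d/2-1,d-1}\bigl(1+\bigl|\log|s+m|\bigr|\bigr)$ near $s=-m$. Hence Lemma~\ref{l:9} with $f=\Psi$ and the given $M$ yields
\[
\int_\R h(x,s)\Psi(s)\,ds=\Psi(0)+O_{M,C,\gamma_1}\!\left(x^{M-\gamma_1}\Bigl(\tfrac1X\int_{-X}^{X}|\Psi^{(M)}(s)|\,ds+\|\Psi\|_{L_1}\Bigr)\right),\quad X=\min\{1,x^{1-\gamma_1/(M+1)}\}.
\]
Multiplying by $L^d$ and using $x^{M-\gamma_1}L^d=q^{M-\gamma_1}L^{d-M+\gamma_1}\le q^ML^{d-M+\gamma_1}$ produces the error terms of the proposition. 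If $M<d/2-1$ then $M\le d/2-2$, so $\Psi^{(M)}$ is bounded on $[-X,X]$ by $\lappr\|w\|_{M,d-1}$ and $\tfrac1X\int_{-X}^X|\Psi^{(M)}|\lappr\|w\|_{M,d-1}$, giving \eqref{I_q(0)=}. If $M=d/2-1$ and $m\ne0$, the hypothesis $C\le m/2$ keeps the scale $X$ of Lemma~\ref{l:9} small enough relative to $|m|$ that the singular point $s=-m$ of $\Psi^{(M)}$ is separated from the integration range, and the bound for $\Psi^{(M)}$ evaluated there gives $|\Psi^{(M)}(s)|\lappr\bigl(1+|\log m|\bigr)\|w\|_{M,d-1}\lappr|\log m|\,\|w\|_{M,d-1}$, hence the $|\log m|$-factor. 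If $M=d/2-1$ and $m=0$ the singularity sits at the origin, so $\tfrac1X\int_{-X}^{X}|\Psi^{(M)}(s)|\,ds\lappr\|w\|_{M,d-1}\,\tfrac1X\int_{-X}^{X}\bigl(1+\bigl|\log|s|\bigr|\bigr)ds\lappr\|w\|_{M,d-1}\,(1+|\log X|)$, and $|\log X|\lappr|\log(q/L)|$ by the definition of $X$, which is the last assertion. (In the marginal range where $x=q/L$ fails to be small one argues directly: both $|I_q(0)|$ and $L^d|\sigma_\infty(F,w,m)|$ are $\lappr L^d\|w\|_{0,d-1}$, which is dominated by the right-hand sides since then $q\asymp L$.)

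\emph{Main obstacle.} Everything above is routine once the input from the Appendix is available, and that is where the real work lies: showing that $m'\mapsto\sigma_\infty(F_0,w,m')$ is $C^{d/2-2}$ with the quantitative bounds $|\partial_{m'}^{\,j}\sigma_\infty|\lappr_d\|w\|_{j,d-1}$, that $\partial_{m'}^{\,d/2-1}\sigma_\infty$ exists off $m'=0$ and has at most a logarithmic singularity there, and that $\sigma_\infty(F_0,w,m')$ decays in $m'$ fast enough to be integrable with $\|\sigma_\infty(F_0,w,\cdot)\|_{L_1}\lappr_d\|w\|_{0,d+1}$. This amounts to analysing the singular surface integral \eqref{lead_term} over the non-compact cone-like quadric $\Sigma_{m'}$: one splits $w$ by a partition of unity into a piece supported near the cone point of $\Sigma_0$ and a piece supported away from it, parametrises $\Sigma_{m'}$ in coordinates adapted to $F_0=\x\cdot\y$ near $0$, differentiates under the integral sign following the techniques of \cite{DK1}, and tracks carefully the borderline order $j=d/2-1$ at which the logarithm first appears, keeping the weights aligned with the norms $\|w\|_{j,d-1}$ and $\|w\|_{0,d+1}$ in the statement.
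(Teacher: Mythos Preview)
Your proof is correct and follows essentially the same route as the paper's: rescale $\z\mapsto L\vv$ to pull out $L^d$, apply the coarea formula to reduce $\tilde I_q(0)$ to $\int_\R h(q/L,t)\,\II(m+t)\,dt$ with $\II(t)=\sigma_\infty(F,w,t)$, invoke Lemma~\ref{l:9}, and feed in the Appendix estimates (Proposition~\ref{p_2}) for $\II^{(M)}$ and $\|\II\|_{L_1}$ to distinguish the three regimes of $M$. Your handling of the three cases and of the factor $|\log X|\lappr|\log(q/L)|$ matches the paper's.
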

{\it Proof.} Let us  write $I_q(\vc)$ as 
\be\lbl{Iq-t_Iq}
I_q(\vc)=L^{d} \tilde I_q(\vc),
\ee
where
\be\lbl{tilde_I_q}
\tilde I_q (\vc) = \int_{\R^{d}} w(\z)\,
h\left(\frac qL, { 
F^m}(\z)\right) e_q(-\z\cdot \vc L)
\,d\z\,.
\ee
Applying the  co-area formula (see e.g. \cite{Chav}, p.138)  to the integral in \eqref{tilde_I_q}
with $\vc=0$ we get that  
$$
\tilde I_q(0)=\int_\R \II({ m+}t) h(q/L,t)\, dt\,,
$$
where
$$
\II(t) = \int_{\Sigma_t} w(\z)\,\mu^{\Sigma_t}(d\z)\,
$$
(the measure $\mu^{\Sigma_t}$ is the same as in \eqref{lead_term}). 
   Since $q\le CL$, then on 
account of Lemma~\ref{l:9},
$$
\int_\R \II({  m+}t) h(x,t)\,dt = \II({  m}) +
O_{M,C}\Big(x^{M-{\gamma_1}} 
\big(\frac1X\int_{-X}^{X}|\II^{(M)}({  m+}t)|\,dt
+\|\II\|_{0,2}\big)\Big),
$$
where $X:=\min\{1,x^{1-\gamma_1/(M+1)}\}$.
In order to conclude the proof, we make use of Proposition~\ref{p_2}
of Appendix~\ref{sec:I(t)}, which guarantees that
$$
\int_{-X}^{X}|\II^{(M)}({  m +}t)|\,dt{ \lappr_{d,m}} \left\{\begin{array}{cc}
X\|w\|_{M,d-1}& M< d/2-1\,, \\
{  X |\log m|\|w\|_{M,d-1}}&{  M=d/2-1, m\neq 0, C\le m/2,}\\
X|\log X| \|w\|_{M,d-1} &   M = d/2-1\,,
\end{array}
\right.
$$
and that  $\|\II\|_{L_1}  \lappr_{d} \|w\|_{0,d+1}$.

\section{The $J_0$ term }\label{sec:4}

In this section we prove the following proposition concerning  the
term $J_0$ from \eqref{eq:N_L=} when $F=F_0$, not assuming that $0\notin\supp w$:

\begin{proposition}\lbl{l:n(0)}
 Let $F$ be the quadric $F_0$. 
  Assume that $w\in C^{d/2-2}(\R^{d})$. Then, for any $0<\gamma_2<1$,
	$$
	\big|J_0-L^{d}\sigma_\infty(w; F, { m})\prod_p\sigma_p\big|
        \lappr_{\gamma_2{ ,d,m}}
	 L^{\frac{d}2+2 +\gamma_2(\frac{d}2-1)}
        \left(\|w\|_{d/2-2,d-1}+\|w\|_{0,d+1}\right).
	$$
\end{proposition}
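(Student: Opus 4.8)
The strategy is to split $J_0$ as $J_0=J_0^++J_0^-$ according to \eqref{eq:n(0)=}, with the cut $q\le L^{1-\gamma_2}$, and to estimate the two pieces separately, invoking Lemma~\ref{l:25HB}, Corollary~\ref{c:28HB}, Lemma~\ref{l:31HB}, Corollary~\ref{c:h} and Proposition~\ref{l:I_q(0)=}. First I would treat the "large $q$" tail $J_0^+$. Here one does not use the delicate singular-integral expansion; instead one bounds $|I_q(0)|$ crudely. From \eqref{eq:I'} and Corollary~\ref{c:h} we have $|I_q(0)|\le (L/q)\int_{\R^d}|w(\z/L)|\,d\z = L^{d+1}q^{-1}\|w\|_{L_1}$, and moreover the support condition $h(x,y)=0$ for $x>\max(1,2|y|)$ forces $q\lesssim L$ in the sum, so the range of $q$ is $L^{1-\gamma_2}<q\lesssim L$. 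Combining with $|S_q(0)|\lesssim_A q^{d/2+1}$ (Lemma~\ref{l:25HB}) gives
$$
|J_0^+|\lesssim \sum_{L^{1-\gamma_2}<q\lesssim L} q^{-d}\,q^{d/2+1}\,L^{d+1}q^{-1}\|w\|_{L_1}
= L^{d+1}\|w\|_{L_1}\sum_{L^{1-\gamma_2}<q\lesssim L} q^{-d/2},
$$
and since $d>4$ the sum is $\lesssim (L^{1-\gamma_2})^{1-d/2}=L^{(1-\gamma_2)(1-d/2)}$, which yields $|J_0^+|\lesssim_{d,\gamma_2} L^{d/2+2+\gamma_2(d/2-1)}\|w\|_{L_1}$, and finally $\|w\|_{L_1}\le C_d\|w\|_{0,d+1}$. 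This is exactly the bound attributed to Lemma~\ref{l:I_A} in the scheme of Section~\ref{sec:2}.

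Next the "small $q$" part $J_0^-$, where the main term comes from. Apply Proposition~\ref{l:I_q(0)=} with $M=d/2-1$ (the borderline case) to each $I_q(0)$ with $q\le L^{1-\gamma_2}\le CL$: this gives $I_q(0)=L^d\sigma_\infty(F,w,m)+E_q$, where the error $E_q$ is
$$
E_q=O_{d,\gamma_2,m}\!\Big(q^{d/2-1}L^{d/2+1+\gamma_1}\,\|w\|_{d/2-2,d-1}\,\Lambda + \|w\|_{0,d+1}\Big),
$$
with $\Lambda$ a logarithmic factor ($|\log m|$ if $m\neq0$, $|\log(q/L)|$ if $m=0$) that is harmless since $\log(q/L)\lesssim \log L\lesssim_\delta L^\delta$. (Here I take $\gamma_1$ to be a small auxiliary exponent, e.g. $\gamma_1=\gamma_2/2$, absorbed into the $\varepsilon$-type slack.) Substituting,
$$
J_0^- = L^d\sigma_\infty(F,w,m)\sum_{q\le L^{1-\gamma_2}} q^{-d}S_q(0)\;+\;\sum_{q\le L^{1-\gamma_2}} q^{-d}S_q(0)\,E_q.
$$
For the first sum, Lemma~\ref{l:31HB} with $X=L^{1-\gamma_2}$ gives $\sum_{q\le X}q^{-d}S_q(0)=\prod_p\sigma_p+O_A(X^{-d/2+2})$; multiplying by $L^d\sigma_\infty$ produces the claimed main term $L^d\sigma_\infty(w;F,m)\prod_p\sigma_p$ plus an error $\lesssim L^d\,|\sigma_\infty|\,L^{(1-\gamma_2)(2-d/2)}$. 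One must note $|\sigma_\infty(w;F,m)|$ is itself controlled by $\|w\|_{0,d+1}$ (via the estimates of the Appendix on \eqref{hrum}, or directly since $\|\II\|_{L_1}\lesssim_d\|w\|_{0,d+1}$ controls the relevant integral), so this error is $\lesssim_{d,m} L^{d/2+2+\gamma_2(d/2-1)}\|w\|_{0,d+1}$, of the required form. For the second sum, bound $|S_q(0)|\lesssim q^{d/2+1}$ termwise and sum: $\sum_{q\le L^{1-\gamma_2}} q^{-d}\cdot q^{d/2+1}\cdot q^{d/2-1}=\sum_{q\le L^{1-\gamma_2}}1\lesssim L^{1-\gamma_2}$ against the $\|w\|_{d/2-2,d-1}$-term, giving $\lesssim L^{1-\gamma_2}\cdot L^{d/2+1+\gamma_1}L^\delta\|w\|_{d/2-2,d-1}\lesssim L^{d/2+2+\gamma_1+\delta-\gamma_2}\|w\|_{d/2-2,d-1}$, which is $\le L^{d/2+2+\gamma_2(d/2-1)}\|w\|_{d/2-2,d-1}$ once $\gamma_1,\delta$ are chosen small relative to $\gamma_2$ (using $d/2-1\ge1$ so the exponent on the r.h.s. is at least $d/2+2+\gamma_2$); for the $\|w\|_{0,d+1}$-term one uses Corollary~\ref{c:28HB}, $\sum_{q\le X}|S_q(0)|\lesssim X^{d/2+2}$ with $X=L^{1-\gamma_2}$, times $q^{-d}$ — more precisely $\sum_{q\le X}q^{-d}|S_q(0)|\le\sum q^{-d/2+1}\lesssim 1$, so this contributes only $O(\|w\|_{0,d+1})$, absorbed. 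Collecting the bounds on $J_0^+$, the main term, and both error sums from $J_0^-$, and adding, gives the proposition.

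\textbf{Main obstacle.} The routine part is the geometric-series bookkeeping; the one genuinely delicate point is the borderline case $M=d/2-1$ in Proposition~\ref{l:I_q(0)=}, which is forced because $w$ is only assumed $C^{d/2-2}$ and the map \eqref{hrum} $m\mapsto\sigma_\infty(F_0,w,m)$ generically has a logarithmic singularity in its $(d/2-1)$-st derivative at $m=0$ (this is the content of the Appendix). One must make sure the logarithmic factors $|\log m|$ or $|\log(q/L)|$ arising there are genuinely absorbable into the $L^{\gamma_2(d/2-1)}$ slack uniformly for $q\le L^{1-\gamma_2}$ and for $m$ in a fixed compact interval — in particular, for $m$ near but not equal to $0$ one should check which branch of Proposition~\ref{l:I_q(0)=} applies (the constraint $C\le m/2$ there means that for small $|m|$ one may have to use the $m=0$ branch with its $|\log(q/L)|\lesssim\log L$ instead), and confirm the final exponent still comes out as stated. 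A secondary care point is ensuring $|\sigma_\infty(w;F,m)|$ and $\|\II\|_{L_1}$ are bounded by $\|w\|_{0,d+1}$ uniformly in $m$ on compacts, so that the $X^{-d/2+2}$ error from Lemma~\ref{l:31HB} multiplied by $L^d\sigma_\infty$ is of the advertised shape.
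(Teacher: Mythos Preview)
Your decomposition $J_0=J_0^++J_0^-$ and your treatment of $J_0^+$ are exactly as in the paper (this is Lemma~\ref{l:I_A}). The structure of your argument for $J_0^-$ is also the same as in Lemma~\ref{l:I_B}: insert Proposition~\ref{l:I_q(0)=}, separate the main term using Lemma~\ref{l:31HB}, and bound the remainder sum via Lemma~\ref{l:25HB}.

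However, there is a genuine gap in your choice of $M$. You invoke Proposition~\ref{l:I_q(0)=} with $M=d/2-1$, but the error term there carries the norm $\|w\|_{M,d-1}=\|w\|_{d/2-1,d-1}$, which requires $w\in C^{d/2-1}$. The hypothesis of the proposition you are proving gives only $w\in C^{d/2-2}$, so the borderline case of Proposition~\ref{l:I_q(0)=} is not available to you. Your write-up is in fact internally inconsistent on this point: you set $M=d/2-1$ (and use $q^{d/2-1}$, $L^{d/2+1+\gamma_1}$, and the sum $\sum_{q\le L^{1-\gamma_2}}1$ accordingly), yet you record the norm as $\|w\|_{d/2-2,d-1}$.

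The fix is simply to take $M=d/2-2$, which is what the paper does. This is the non-borderline case of Proposition~\ref{l:I_q(0)=}, so no logarithmic factor $\Lambda$ appears at all, the norm is the correct $\|w\|_{d/2-2,d-1}$, and the remainder sum becomes
\[
\sum_{q\le L^{1-\gamma_2}} q^{-d}\,q^{d/2+1}\,q^{d/2-2}=\sum_{q\le L^{1-\gamma_2}} q^{-1}\lesssim \log L,
\]
yielding $|I_B|\lesssim L^{d/2+2+\gamma_1}\log L\,(\|w\|_{d/2-2,d-1}+\|w\|_{0,d+1})$, which after choosing $\gamma_1=\gamma_2/2$ is $\lesssim L^{d/2+2+\gamma_2}(\|w\|_{d/2-2,d-1}+\|w\|_{0,d+1})$. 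Thus your entire ``main obstacle'' discussion about the logarithmic singularity of $\II^{(d/2-1)}$ and the constraint $C\le m/2$ is moot: the proof never enters that regime. The only Appendix input actually needed is the zeroth-order bound $|\sigma_\infty(w;F,m)|=|\II(m)|\lesssim\|w\|_{0,d-1}$ from Proposition~\ref{p_2}, to control the error coming from Lemma~\ref{l:31HB}.
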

{\it Proof.}
To establish Proposition~\ref{l:n(0)} we write $J_0$ in the form
\eqref{eq:n(0)=}. 
Then the assertion   follows from Lemmas~\ref{l:I_A} and
\ref{l:I_B} below,  estimating $J_0^+$ and $J_0^-$
separately, and noting then that $\|w\|_{L_1}\le
\|w\|_{0,d+1}$  for $d\ge 3$.
\qed

\begin{lemma}\lbl{l:I_A}
	Assume that $w\in L_1(\R^{d})$ and $d>2$. Then we have the
        bound $|J_0^+|\lappr_{A}
        L^{d/2+2 +\gamma_2(d/2 -1)}\|w\|_{L_1}$.
\end{lemma}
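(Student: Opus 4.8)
The plan is to bound $J_0^+=\sum_{q>L^{1-\gamma_2}}q^{-d}S_q(0)I_q(0)$ term by term, using only the crude estimates already available for $S_q(0)$ and $I_q(0)$ — no cancellation is needed — and then to sum the resulting tail series. First I would estimate $I_q(0)$: from \eqref{eq:I'} with $\vc=0$, the change of variables $\z\mapsto L\z$, and the bound $\sup_y|h(q/L,y)|\lappr L/q$ furnished by Corollary~\ref{c:h}, one gets
$$
|I_q(0)|\le \Big(\sup_{y\in\R}|h(q/L,y)|\Big)\int_{\R^{d}}|w(\z/L)|\,d\z \lappr \frac Lq\,L^{d}\|w\|_{L_1} = L^{d+1}q^{-1}\|w\|_{L_1},
$$
uniformly in $q$ (this is where the mere integrability of $w$, and no smoothness, is used).

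Next, Lemma~\ref{l:25HB} gives $|S_q(0)|\lappr_A q^{d/2+1}$. Combining the two bounds,
$$
|J_0^+|\le \sum_{q>L^{1-\gamma_2}}q^{-d}\,|S_q(0)|\,|I_q(0)| \lappr_A L^{d+1}\|w\|_{L_1}\sum_{q>L^{1-\gamma_2}}q^{-d/2}.
$$
Since $d>2$ we have $d/2>1$, so the series converges and $\sum_{q>X}q^{-d/2}\lappr_d X^{1-d/2}$ for $X\ge1$; applying this with $X=L^{1-\gamma_2}>1$ yields $\sum_{q>L^{1-\gamma_2}}q^{-d/2}\lappr_d L^{(1-\gamma_2)(1-d/2)}$.

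Finally I would collect the exponents: $d+1+(1-\gamma_2)(1-d/2)=\tfrac d2+2+\gamma_2(\tfrac d2-1)$, which gives exactly $|J_0^+|\lappr_A L^{d/2+2+\gamma_2(d/2-1)}\|w\|_{L_1}$. There is no genuine obstacle here; the only points needing (routine) care are that $h(q/L,\cdot)$ is controlled solely by its $L^\infty$-bound $\lappr L/q$ — so that no decay or regularity of $w$ enters — and that the hypothesis $d>2$ is precisely what makes $\sum q^{-d/2}$ a convergent tail, producing the stated power of $L^{1-\gamma_2}$ rather than, say, a logarithm.
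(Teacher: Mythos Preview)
Your proof is correct and follows essentially the same route as the paper: bound $|S_q(0)|\lappr_A q^{d/2+1}$ via Lemma~\ref{l:25HB}, bound $|I_q(0)|\lappr L^{d+1}q^{-1}\|w\|_{L_1}$ via Corollary~\ref{c:h}, and sum the resulting tail $\sum_{q>L^{1-\gamma_2}}q^{-d/2}$ using $d>2$. The exponent bookkeeping is identical.
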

{\it Proof.}
Since according to Lemma~\ref{l:25HB} $|S_q(0)|\lappr_A q^{d/2+1}$, then 
$$
|J_0^+|\lappr_A\sum_{q>L^{1-\gamma_2}} q^{-d/2+1}I_q(0).
$$
Dividing in the definition \eqref{eq:I'} of the integral $I_q$ the variable of integration $\z$ by $L$, we get
$$
I_q(0)=L^{d}\int_{\R^{d}} w(\z) h(q/L, { F^m}(\z))\,d\z\,.
$$
Corollary~\ref{c:h} implies that
$
\ds{|I_q(0)|\lappr \fr{L^{d+1}}{q}  \|w\|_{L_1}.} 
$
Therefore,
\begin{equation*}
  \begin{split}
|J_0^+|&\lappr_A L^{d+1}\|w\|_{L_1} \sum_{q>L^{1-{\gamma_2}}} q^{-d/2 }
\lappr_A  L^{d+1}\|w\|_{L_1}L^{(-d/2+1)(1-\gamma_2)}\\
&=
L^{d/2+2 +\gamma_2(d/2-1)}\|w\|_{L_1}.
  \end{split}
\end{equation*}
\qed

\smallskip

\begin{lemma}\lbl{l:I_B}  Let $F$ be the quadric $F_0$. 
	Assume that $w\in C^{d/2-2}(\R^{d})$. Then 
	$$
	J_0^-=L^{d}\sigma_\infty(w; F, { m})\prod_p\sigma_p + O_{\gamma_2{ ,d,m}}\big[ \left(\|w\|_{d/2-2,d-1}+\|w\|_{0,d+1}\right) L^{d/2+2 +\gamma_2} \big].
	$$
\end{lemma}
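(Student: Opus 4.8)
The plan is to estimate $J_0^-=\sum_{q\le L^{1-\gamma_2}}q^{-d}S_q(0)I_q(0)$ by inserting the asymptotics for $I_q(0)$ from Proposition~\ref{l:I_q(0)=} and then resumming the leading and error terms. First I would apply Proposition~\ref{l:I_q(0)=} with $M=d/2-1$: note that on the range $q\le L^{1-\gamma_2}\le L$ we have $q\le CL$ with $C$ absolute, so the proposition applies (choosing $m\ne0$, $m=0$, or the intermediate case, and absorbing the factors $|\log m|$ or $|\log(q/L)|$ into the implicit constant, the latter since $|\log(q/L)|\le\log L$ and we have $\gamma_2$-room to spare). This gives
$$
I_q(0)=L^{d}\sigma_\infty(w;F,m)+O_{d,\gamma_2,m}\!\left(q^{d/2-1}L^{d/2+1+\gamma_1}\|w\|_{d/2-2,d-1}+\|w\|_{0,d+1}\right),
$$
after renaming $M=d/2-1$ and recalling $d-M=d/2+1$; here $\gamma_1>0$ is an auxiliary parameter that I would simply fix equal to $\gamma_2$ (or any small constant), so it disappears into the constant and the error exponent. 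Strictly, the norm $\|w\|_{d/2-2,d-1}$ requires $w\in C^{d/2-2}$, which is exactly the hypothesis.

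Next, substitute this into the sum over $q$. The leading term contributes
$$
L^{d}\sigma_\infty(w;F,m)\sum_{q\le L^{1-\gamma_2}}q^{-d}S_q(0),
$$
and by Lemma~\ref{l:31HB} (valid since $d>4$) the truncated singular series equals $\prod_p\sigma_p+O_A\!\big((L^{1-\gamma_2})^{-d/2+2}\big)$. The main term is thus $L^{d}\sigma_\infty(w;F,m)\prod_p\sigma_p$, while the tail correction is bounded by $L^{d}\,|\sigma_\infty(w;F,m)|\,L^{-(1-\gamma_2)(d/2-2)}$; using the Appendix bound $|\sigma_\infty(w;F,m)|\lappr_{d,m}\|w\|_{0,d+1}$ (the same $L_1$-type estimate quoted in the proof of Proposition~\ref{l:I_q(0)=}), this is $\lappr_{d,m}\|w\|_{0,d+1}L^{d/2+2+(1-d/2+2)\gamma_2\cdot(-1)}$; a quick exponent check shows $d-(1-\gamma_2)(d/2-2)=d/2+2+\gamma_2(d/2-2)\le d/2+2+\gamma_2$ since $d/2-2\le 1$ fails for $d>4$ — so here I must be more careful: for $d>4$ one has $d/2-2>0$, and in fact the exponent $d/2+2+\gamma_2(d/2-2)$ can exceed $d/2+2+\gamma_2$. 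I would therefore instead bound the tail more crudely using $\gamma_2<1$ only after noting that this term is dominated by the error term analysed next, or re-examine whether the claimed exponent in the lemma should read $L^{d/2+2+\gamma_2(d/2-1)}$ consistently with Proposition~\ref{l:n(0)}; the safe route is to track all three error contributions and take the worst, which is governed by the $q$-sum of the genuine error in $I_q(0)$.

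For that error term, $\sum_{q\le L^{1-\gamma_2}}q^{-d}|S_q(0)|\cdot O(\cdots)$, I use $|S_q(0)|\lappr_A q^{d/2+1}$ (Lemma~\ref{l:25HB}), so $q^{-d}|S_q(0)|\lappr_A q^{-d/2+1}$. Then:
$$
\sum_{q\le L^{1-\gamma_2}}q^{-d/2+1}\Big(q^{d/2-1}L^{d/2+1+\gamma_1}\|w\|_{d/2-2,d-1}+\|w\|_{0,d+1}\Big).
$$
The first part gives $\sum_{q\le L^{1-\gamma_2}}1\cdot L^{d/2+1+\gamma_1}\|w\|_{d/2-2,d-1}\lappr L^{1-\gamma_2}L^{d/2+1+\gamma_1}\|w\|_{d/2-2,d-1}=L^{d/2+2-\gamma_2+\gamma_1}\|w\|_{d/2-2,d-1}$; choosing $\gamma_1\le 2\gamma_2$ (e.g. $\gamma_1=\gamma_2$) makes the exponent $\le d/2+2+\gamma_2$. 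The second part gives, for $d>4$, a convergent sum $\sum_q q^{-d/2+1}=O_d(1)$ times $\|w\|_{0,d+1}$, which is $O_d(\|w\|_{0,d+1})\le L^{d/2+2+\gamma_2}\|w\|_{0,d+1}$ trivially. Combining the main term, the truncation error, and this error term — and absorbing the logarithmic factors in the $m=0$ and intermediate cases into the $L^{\gamma_2}$ slack ($|\log(q/L)|\le\log L\lappr_\varepsilon L^{\gamma_2}$) — yields exactly
$$
J_0^-=L^{d}\sigma_\infty(w;F,m)\prod_p\sigma_p+O_{\gamma_2,d,m}\!\big(\left(\|w\|_{d/2-2,d-1}+\|w\|_{0,d+1}\right)L^{d/2+2+\gamma_2}\big),
$$
as claimed. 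The main obstacle is bookkeeping: making sure that the three error sources (the $I_q(0)$-error summed against $|S_q(0)|$, the $\log$ factors, and the truncation of the singular series) all fit under the single target exponent $d/2+2+\gamma_2$, which forces the specific relation between the auxiliary parameter $\gamma_1$ in Proposition~\ref{l:I_q(0)=} and $\gamma_2$; I would fix $\gamma_1=\gamma_2$ at the outset to streamline this.
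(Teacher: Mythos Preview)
Your approach has one genuine gap: you apply Proposition~\ref{l:I_q(0)=} with $M=d/2-1$, yet you record the resulting norm as $\|w\|_{d/2-2,d-1}$. The proposition gives the error in terms of $\|w\|_{M,d-1}$, so with $M=d/2-1$ the norm is $\|w\|_{d/2-1,d-1}$, which needs $w\in C^{d/2-1}$ --- one derivative more than the hypothesis $w\in C^{d/2-2}$ supplies, and one derivative more than the norm appearing in the statement you are trying to prove. So either the exponent bookkeeping or the norm bookkeeping breaks; in either reading the argument does not close.

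The paper avoids this by taking $M=d/2-2$ instead. Then the norm is exactly $\|w\|_{d/2-2,d-1}$, the logarithmic cases of Proposition~\ref{l:I_q(0)=} never arise, and the $q$-sum of the $I_q(0)$-error becomes
\[
\sum_{q\le L^{1-\gamma_2}} q^{-d/2+1}\,q^{d/2-2}\,L^{d/2+2+\delta}
\;=\;L^{d/2+2+\delta}\sum_{q\le L^{1-\gamma_2}} q^{-1}
\;\lappr\; L^{d/2+2+\delta}\log L,
\]
after which the choice $\delta=\gamma_2/2$ absorbs the logarithm into $L^{\gamma_2}$. This is cleaner than your route through $M=d/2-1$ and removes the need to juggle the $|\log m|$ and $|\log(q/L)|$ factors.

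Your worry about the truncation error from Lemma~\ref{l:31HB} is well spotted: it produces the exponent $d/2+2+\gamma_2(d/2-2)$, which for $d>6$ exceeds $d/2+2+\gamma_2$. The paper records exactly this exponent for the $I_A$-term and does not try to squeeze it under $d/2+2+\gamma_2$; the apparent discrepancy with the stated lemma is harmless because in Proposition~\ref{l:n(0)} everything is absorbed into the larger bound $L^{d/2+2+\gamma_2(d/2-1)}$ coming from $J_0^+$. So your instinct to compare with Proposition~\ref{l:n(0)} was correct, but this is a cosmetic issue in the paper rather than a flaw in your reasoning.
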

{\it Proof.}
Inserting \eqref{I_q(0)=}  into the
definition of the term $J_0^-$, we get 
$
J_0^-=I_A+I_B,
$
where
\begin{align}\non
&I_A :=L^{d}\sigma_\infty(w; F, { m})\sum_{q\leq L^{1-\gamma_2}} q^{-d}S_q(0), 
\\\non 
&|I_B|\lappr_{M,\delta{ ,d,m}}L^{d-M+\delta}\left(\|w\|_{M,d-1}+\|w\|_{0,d+1}\right)
\sum_{q\leq 
  L^{1-{\gamma_2}}}S_q(0) q^{-d+M} \,,
\end{align}
for $M\le d/2-2$ and any $\delta>0$. \ 
Lemma~\ref{l:31HB} implies that $$\sum_{q\leq L^{1-\gamma_2}} q^{-d}S_q(0)=\prod_p\sigma_p + O(L^{(-d/2+2)(1-\gamma_2)}),$$ so
$$
I_A=L^{d}\sigma_\infty(w; F, { m})\prod_p\sigma_p + O(\sigma_\infty(w; F, { m})
L^{d/2+2 +\gamma_2(d/2-2)})\,,
$$
whereas $|\sigma_\infty(w; F, { m})|=|\II({ m})|\le
\|w\|_{0,d-1}$  on account of Proposition~\ref{p_2}.  As for the term
$I_B$, Lemma~\ref{l:25HB} implies that
$$
|I_B|\lappr_{M,\delta{ ,d,m}}
L^{d-M+\delta}\left(\|w\|_{M,d-1}+\|w\|_{0,d+1}\right) \sum_{q\leq 
  L^{1-\gamma_2}}q^{-d/2+1+M}\,. 
$$
Choosing $M=d/2-2$ and $\delta =\gamma_2/2$, we get 
\begin{equation*}
  \begin{split}
|I_B|&\lappr_{\delta{ ,d,m}} \left(\|w\|_{d/2-2,d-1}+\|w\|_{0,d+1}\right)
L^{d/2+2+\delta}\ln L\\
&\lappr_{\gamma_2{ ,d,m}}
\left(\|w\|_{d/2-2,d-1}+\|w\|_{0,d+1}\right) \, L^{d/2+2 +\gamma_2}\,.
  \end{split}
  \end{equation*}
\qed

\section{The  $J_>^{\gamma_1}$ term}\label{sec:5}

We provide here an estimate of the term $J_>^{\gamma_1}$ defined in
\eqref{eq:J><}.  The key point of the proof 
 is an adaptation of Lemma~19 of
\cite{HB} to our case; we recall the notation~\eqref{Iq-t_Iq}. 

\begin{proposition}\label{l:19}
   For any $ N>0$ and $w\in C^N(\R^d)$
\be\lbl{eq:19}
|\tilde I_q(\vc)|\lappr_{d,N,A{ ,m}} \frac Lq |\vc|^{-N}\left\|w\right\|_{N,2N+d+1}
  \ee
\end{proposition}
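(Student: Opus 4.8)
The plan is to extract the decay in $|\vc|$ from the oscillating factor $e_q(-L\z\cdot\vc)$ in \eqref{tilde_I_q} by repeated integration by parts. We may assume $\vc\ne0$. Recall that for $F=F_0$ (see \eqref{F0}) one has $F^m(\z)=\x\cdot\y-m$ for $\z=(\x,\y)$, so $\nabla_\z F^m=A\z$ and $\nabla_\z^2 F^m\equiv A$. The first–order operator $\cD:=\frac{q}{-2\pi i L|\vc|^2}\,\vc\cdot\nabla_\z$ satisfies $\cD\,e_q(-L\z\cdot\vc)=e_q(-L\z\cdot\vc)$, and since $w\in C^N$ and $\|w\|_{N,2N+d+1}<\infty$ forces $\p^\gamma w$ to decay at infinity for $|\gamma|\le N$ (while the $\z$–derivatives of $h(q/L,F^m(\z))$ grow only polynomially, $F^m$ being quadratic), I would integrate by parts $N$ times and discard the boundary terms to obtain
$$
|\tilde I_q(\vc)|\le\left(\frac{q}{2\pi L|\vc|^2}\right)^{\!N}\int_{\R^d}\bigl|(\vc\cdot\nabla_\z)^N\bigl[w(\z)\,h(q/L,F^m(\z))\bigr]\bigr|\,d\z .
$$

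Next I would expand $(\vc\cdot\nabla_\z)^N[w\cdot(h\circ F^m)]$ by the Leibniz rule. A short induction shows that, for $k\ge1$, $(\vc\cdot\nabla_\z)^k[h(q/L,F^m(\z))]$ is a sum — with coefficients $\lappr_k1$ — of terms $(\p_y^l h)(q/L,F^m(\z))\,(\vc\cdot A\z)^{2l-k}(\vc\cdot A\vc)^{k-l}$ over $\lceil k/2\rceil\le l\le k$. Hence $(\vc\cdot\nabla_\z)^N[w\cdot(h\circ F^m)]$ expands into $\lappr_N1$ terms, each of which is, in modulus, $\lappr_{N,A}|\vc|^N\,|\p^\gamma w(\z)|\,|(\p_y^l h)(q/L,F^m(\z))|\,\langle\z\rangle^a$ with $|\gamma|=j\le N$ and $a:=2l-(N-j)\in\{0,\dots,N-j\}$. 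Using $|\p^\gamma w(\z)|\le\|w\|_{N,2N+d+1}\langle\z\rangle^{-(2N+d+1)}$ (valid since $j\le N$) and $\bigl(\frac{q}{L|\vc|^2}\bigr)^N|\vc|^N=(q/L)^N|\vc|^{-N}$, each such term contributes to $|\tilde I_q(\vc)|$ at most
$$
\lappr_{N,A}\ (q/L)^N|\vc|^{-N}\,\|w\|_{N,2N+d+1}\int_{\R^d}\langle\z\rangle^{-(2N+d+1)+a}\,\bigl|(\p_y^l h)(q/L,F^m(\z))\bigr|\,d\z .
$$

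For the factor $|(\p_y^l h)(q/L,y)|$ I would use the bound $\lappr_l(L/q)^{l+1}$, which holds for every $y$: when $l\ge1$ it follows from Lemma~\ref{l:4} (on $\{|y|\le x/2\}$ the function $h(x,\cdot)$ is constant, so $\p_y^l h=0$ there, while on $\{|y|\ge x/2\}$ part~(3) gives $\p_y^l h\lappr x^{-1}|y|^{-l}\lappr x^{-l-1}$), and when $l=0$ it is Corollary~\ref{c:h}. Then I would split on the size of $q$, setting $C_0:=\max\{2,4|m|\}$. If $q\le C_0L$, the $\z$–integral over $\R^d$ is $\lappr_d1$ (it converges because $a\le N<2N+1$), so the term is $\lappr_{N,d,A}(q/L)^{N-l-1}|\vc|^{-N}\|w\|_{N,2N+d+1}$; since $N-l-1\ge-1$ and $q/L\le C_0$, this is $\lappr_{N,d,A,m}(L/q)|\vc|^{-N}\|w\|_{N,2N+d+1}$. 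If $q>C_0L$, then $q/L>1$, so the support property $h(x,y)=0$ for $x>\max\{1,2|y|\}$ of Theorem~\ref{th:1} forces $|F^m(\z)|\ge q/(2L)$ on the domain of integration; as then $|m|\le q/(4L)$, this gives $|\x\cdot\y|\ge q/(4L)$ and hence $|\z|^2\ge2|\x||\y|\ge2|\x\cdot\y|\ge q/(2L)$, i.e.\ the integral is restricted to $\{\langle\z\rangle\ge R_q\}$ with $R_q:=\sqrt{q/(2L)}\ge1$. There $\int_{\langle\z\rangle\ge R_q}\langle\z\rangle^{-(2N+d+1)+a}\,d\z\lappr_d R_q^{a-2N-1}\lappr_N(q/L)^{(a-2N-1)/2}$, and combining this with $(L/q)^{l+1}$ and $a=2l-(N-j)$, the net power of $q/L$ becomes $-\tfrac12(N-j+3)\le-\tfrac32<-1$, so once more the term is $\lappr_{N,d,A}(L/q)|\vc|^{-N}\|w\|_{N,2N+d+1}$. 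Summing the $\lappr_N1$ terms gives \eqref{eq:19}.

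The hard part is the regime $q\gtrsim L$, where $q/L$ is unbounded: a crude differentiation of $h(q/L,F^m(\z))$ there creates positive powers of $q/L$ that would wreck the estimate, and these are tamed only by combining the support property of $h$ — which confines the integration to $\{|\z|\gtrsim\sqrt{q/L}\}$ — with the decay of $w$; the exponent $2N+d+1$ in $\|w\|_{N,2N+d+1}$ is chosen precisely to absorb both the polynomial factors of degree $\le N$ coming from $\nabla_\z F^m=A\z$ and this surplus power of $q/L$.
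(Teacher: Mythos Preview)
Your argument is correct and follows essentially the same route as the paper's proof: $N$-fold integration by parts against $e_q(-L\z\cdot\vc)$, Leibniz/Fa\`a di Bruno expansion of $(\vc\cdot\nabla_\z)^N[w\cdot(h\circ F^m)]$, the uniform bound $|\partial_y^l h(x,\cdot)|\lappr x^{-l-1}$, and a split between small and large $q$, using in the latter case the support property of $h$ to force $\langle\z\rangle^2\gtrsim q/L$. The only cosmetic differences are that the paper splits at $q=L$ (you split at $q=C_0L$), and in the large-$q$ regime the paper converts the extra $\langle\z\rangle^{-2(N-n+l)}$ directly into powers of $L/q$ before integrating $\langle\z\rangle^{-d-1}$ over all of $\R^d$, whereas you integrate the full weight over $\{\langle\z\rangle\ge R_q\}$; these are equivalent bookkeeping choices.
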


{\it Proof.}
We call $f_q(\z):= w\left(\z\right)
  h\left(\frac qL, 
 {  F^m}(\z)\right)$. Since
  $$
  \frac{i}{2\pi}\frac{q}{L} |\vc|^{-2}\left( \vc\cdot \nabla_{\z}\right)
  e_q(-\z\cdot \vc L)  =  e_q(-\z\cdot \vc L)\,,
  $$
then integrating by parts $N$ times \eqref{tilde_I_q} we get that

\begin{equation*}
  \begin{split}
    \left| \tilde I_q(\vc)\right| &\le \left(\frac{q}{2\pi L} |\vc|^{-2}\right)^N \int_{\R^{d}} \left|\left( \vc\cdot
    \nabla_{\z}\right)^N f_q(\z)\right|\,d\z\\
    &\lappr_{d,N,A} \left(\frac qL\right)^N |\vc|^{-N}
    \sum_{0\le n\le N}\int_{\R^{d}}\max_{0\le l\le n/2}
    \left|\frac{\partial^{n-l}}{\partial y^{n-l}} h\left(\frac
    qL, {  F^m}(\z)\right) \right|\\
        &\qquad\qquad\times|\z|^{n-2l}
    \left|
    \nabla_{\z}^{N-n} w(\z)\right|\,d\z
\,,
  \end{split}
\end{equation*}
where $\ds{\frac{\partial}{\partial y} h}$ stands for the derivative of  $h$ with respect to 
 the second argument. 

Let us distinguish then
 two cases. When $q\le L$, Lemma~\ref{l:5} (with $N=0$) 
 implies that
\begin{equation*}
  \begin{split}
\max_{0\le l\le n/2}\left|\frac{\partial^{n-l}}{\partial y^{n-l}} h\left(\frac
    qL, {  F^m}(\z)\right) \right| |\z|^{n-2l}
    \left|    \nabla_{\z}^{N-n} w(\z)\right|\le\\ \left(
    L/q\right)^{n+1}\lan \z\ran^{-d-1} \|w\|_{N-n,n+d+1}\,,
  \end{split}
\end{equation*}
from which \eqref{eq:19}  follows since $n\le N$. 
  When $q>L$, because of
Lemma~\ref{l:4}, point 1, $h$ is different from zero only if
\be\lbl{h ne 0}
2| {  F^m}(\z)| >\frac qL .
\ee  
\newpage
Then for such $\z$ and for $l\le n$,  point 3 of Lemma~\ref{l:4} implies that

$$
\left|\frac{\partial^{n-l}}{\partial y^{n-l}} h\left(\frac qL,  {  F^m}(\z)\right)\right| \lappr_{n-l} \frac Lq\frac{1}{| {  F^m}(\z)|^{n-l}}\lappr_{n-l}\Big(\frac{L}{q}\Big)^{n-l+1}.
$$
So 
\begin{equation*}
\begin{split}
\max_{0\le l\le n/2} \left|\frac{\partial^{n-l}}{\partial y^{n-l}} h\left(\frac
qL, {  F^m}(\z)\right) \right| |\z|^{n-2l}
\left|
\nabla_{\z}^{N-n} w(\z)\right|\le\\ \max_{0\le l\le n}\frac{\left(
	L/q\right)^{n-l+1}}{\lan \z\ran^{2(N-n+l)}}
\frac{\|w\|_{N-n,2N-n+d+1}}{\lan\z\ran^{d+1}}\,.
\end{split}
\end{equation*}
Since from  \eqref{h ne 0} we have that $q/L \lappr_{A{, m}}
\lan\z\ran^2$,  
then the first fraction above is bounded by $(L/q)^{N+1}$,  and again 
 \eqref{eq:19} follows. 
\qed

As a corollary, we can infer the desired estimate for $J_>^{\gamma_1}$:
\begin{corollary}\label{c:J>}
  For $J_>^{\gamma_1}$ defined in \eqref{eq:J><} and $d>2$ we have
  $$
|J_>^{\gamma_1}|\lappr_{d,{\gamma_1},A{ , m}}\|w\|_{N_0,2N_0+d+1}\,,
$$
where $N_0:=\lceil (d+1)(1+1/{\gamma_1})\rceil$.
\end{corollary}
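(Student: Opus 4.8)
The plan is to derive Corollary~\ref{c:J>} directly from Proposition~\ref{l:19} by summing the estimate \eqref{eq:19} over $q$ and over the relevant range of $\vc$. First I would recall from \eqref{eq:J><} that $J_>^{\gamma_1}=\sum_{|\vc|>L^{\gamma_1}} n(\vc)$ with $n(\vc)=\sum_{q\ge1} q^{-d}S_q(\vc)I_q(\vc)$, and from \eqref{Iq-t_Iq} that $I_q(\vc)=L^d\tilde I_q(\vc)$. Combining the bound $|S_q(\vc)|\lappr_A q^{d/2+1}$ from Lemma~\ref{l:25HB} with \eqref{eq:19}, for each fixed $\vc\ne0$ the $q$-sum is bounded by
\[
|n(\vc)|\lappr_{d,N,A,m} L^{d+1}\,|\vc|^{-N}\,\|w\|_{N,2N+d+1}\sum_{q\ge1} q^{-d}q^{d/2+1}q^{-1}
= L^{d+1}|\vc|^{-N}\|w\|_{N,2N+d+1}\sum_{q\ge1}q^{-d/2},
\]
and the $q$-series converges since $d>2$. (Here I should note that Proposition~\ref{l:19} gives $|\tilde I_q(\vc)|\lappr (L/q)|\vc|^{-N}\|w\|_{N,2N+d+1}$, so the factor $q^{-1}$ is the one coming from $\tilde I_q$, not from $h$ directly.)

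Next I would sum over the lattice points $\vc$ with $|\vc|>L^{\gamma_1}$. Using the standard estimate $\sum_{|\vc|>R,\ \vc\in\Z^d}|\vc|^{-N}\lappr_d R^{d-N}$ valid whenever $N>d$, with $R=L^{\gamma_1}$ I get
\[
|J_>^{\gamma_1}|\lappr_{d,N,A,m} L^{d+1}\,\|w\|_{N,2N+d+1}\cdot L^{\gamma_1(d-N)}
= \|w\|_{N,2N+d+1}\cdot L^{\,d+1+\gamma_1(d-N)}.
\]
To make the exponent of $L$ nonpositive I need $d+1+\gamma_1(d-N)\le0$, i.e. $N\ge d+(d+1)/\gamma_1 = (d+1)(1+1/\gamma_1)$. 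Choosing $N=N_0:=\lceil(d+1)(1+1/\gamma_1)\rceil$ achieves this (and this choice also satisfies $N>d$, so the lattice sum above indeed converges), and yields $|J_>^{\gamma_1}|\lappr_{d,\gamma_1,A,m}\|w\|_{N_0,2N_0+d+1}$, which is the claim.

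The only genuinely delicate point is bookkeeping: one must check that Proposition~\ref{l:19} is applied with $N=N_0$ throughout, that $w\in C^{N_0}$ is exactly the regularity hypothesis needed, and that the two summations (over $q$ and over $\vc$) are each absolutely convergent so that the order of summation in $n(\vc)$ and in $J_>^{\gamma_1}$ is immaterial — both convergences hold because $d>2$ and $N_0>d$ respectively. There is no real analytic obstacle here; the corollary is a clean consequence of the proposition once the exponent arithmetic is done, the main content having already been absorbed into the (harder) proof of Proposition~\ref{l:19}. One should also record that the implied constant depends on $m$ only through the bound $q/L\lappr_{A,m}\langle\z\rangle^2$ used inside Proposition~\ref{l:19}, hence uniformly on compact $m$-intervals as stated in the remarks following Theorem~\ref{th:3}.
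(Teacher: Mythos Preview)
Your argument is correct and follows essentially the same route as the paper: bound $S_q$ by Lemma~\ref{l:25HB}, bound $\tilde I_q$ by Proposition~\ref{l:19}, sum in $q$ (convergent because $d>2$), then sum the tail in $\vc$ and choose $N=N_0$. The paper organises the $\vc$-sum by grouping on shells $\|\vc\|_1=s$ rather than invoking the lattice tail estimate $\sum_{|\vc|>R}|\vc|^{-N}\lappr R^{d-N}$ directly, but this is purely cosmetic.

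One small arithmetic slip: you write $d+(d+1)/\gamma_1=(d+1)(1+1/\gamma_1)$, but in fact $(d+1)(1+1/\gamma_1)=d+1+(d+1)/\gamma_1$, which exceeds your actual threshold by~$1$. This is harmless, since $N_0=\lceil(d+1)(1+1/\gamma_1)\rceil$ is in any case large enough to make the exponent of $L$ nonpositive; but you should correct the displayed equality.
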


{\it Proof.}
By the definition of $J_>^{\gamma_1}$ we have
\begin{equation*}
  \begin{split}
|J_>^{\gamma_1}|&\lappr_d \sum_{s\ge L^{\gamma_1}} s^{d-1}\sum_{q=1}^\infty
q^{-d}\sup_{\|\vc\|_1=s}\ |S_q(\vc)| |I_q(\vc)|\\
&\lappr_{d,A} \sum_{s\ge L^{\gamma_1}} s^{d-1}\sum_{q=1}^\infty
q^{1-d/2}L^d\sup_{\|\vc\|_1=s} |\tilde I_q(\vc)|\\
&\lappr_{d,N,A{ , m}} \sum_{s\ge L^{\gamma_1}} s^{d-1}\sum_{q=1}^\infty
q^{-d/2} s^{-N} L^{d+1}\|w\|_{N,2N+d+1}\,,
  \end{split}
\end{equation*}
  where the second line follows through Lemma~\ref{l:25HB}, while the
  third one via Proposition~\ref{l:19}. We choose $N=\lceil
  (d+1)(1+1/{\gamma_1})\rceil$ and get that
  $$
  \sum_{s\ge L^{\gamma_1}} s^{d-1} s^{-N} L^{d+1}\le
  \sum_{s\ge L^{\gamma_1}} s^{-2} \lappr 1 \,,
  $$
  while the sum in  $q$ is bounded, too. This concludes the proof.
  \qed
  

 \section{ The $J^{\gamma_1}_<$ term }\label{sec:6}
 \subsection{The estimate}\label{subsec:6.1}
 Our next (and final) goal is to estimate the term $J^{\gamma_1}_<$ from \eqref{eq:N_L=}.
 
 \begin{proposition}\lbl{l:J_<} Let $w\in C^{\bar N}(\R^d)$, where
   $\bar N=\bar N(d,{\gamma_1}):= \lceil d^2/{\gamma_1}\rceil-2d$, and
   $0<\gamma_1< d/2-1$. Then, 
 	$$|J_<^{\gamma_1}|\lappr_{A,d,{\gamma_1}{ ,m}}  L^{d/2+2+{\gamma_1}(d+1)} \left(\|w\|_{\bar
 		N,d+5}+\|w\|_{0,\bar N + 3d+4}\right) \,.$$ 
 \end{proposition}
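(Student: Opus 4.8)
The plan is to estimate $J_<^{\gamma_1} = \sum_{\vc\ne 0,\,|\vc|\le L^{\gamma_1}} n(\vc)$ by controlling each $n(\vc) = \sum_q q^{-d} S_q(\vc) I_q(\vc)$ with $0<|\vc|\le L^{\gamma_1}$, following Lemmas~22 and~28 of \cite{HB} but tracking the dependence on $w$. First I would split the $q$-sum at a threshold $q\asymp L^{1-\gamma_3}$ (for a suitable small $\gamma_3$, to be tuned against $\gamma_1$ at the end): for large $q$ one uses only the trivial bounds $|S_q(\vc)|\lappr_A q^{d/2+1}$ (Lemma~\ref{l:25HB}) and $|I_q(\vc)| = L^d|\tilde I_q(\vc)| \lappr (L^{d+1}/q)\|w\|_{L_1}$ (Corollary~\ref{c:h} applied as in Lemma~\ref{l:I_A}), so that $\sum_{q> L^{1-\gamma_3}} q^{-d}|S_q(\vc)||I_q(\vc)| \lappr_A L^{d+1}\|w\|_{L_1}\sum_{q>L^{1-\gamma_3}}q^{-d/2}\lappr L^{d/2+2-\gamma_3(d/2-1)}\|w\|_{0,d+1}$; summing over the $\lappr L^{\gamma_1 d}$ vectors $\vc$ gives an acceptable contribution once $\gamma_1$ and $\gamma_3$ are small. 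The main work is the range $q\le L^{1-\gamma_3}$, where one must exploit oscillation in the $\z$-integral $\tilde I_q(\vc)$ coming from the phase $e_q(-\z\cdot\vc L)$.

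For that range the key step is a stationary-phase / repeated-integration-by-parts estimate for $\tilde I_q(\vc)$ analogous to Proposition~\ref{l:19} but now quantitative in $q$ and $|\vc|$ and taking advantage of $q\le L^{1-\gamma_3}$ and $|\vc|\ge 1$. Writing $f_q(\z) = w(\z) h(q/L, F^m(\z))$ and integrating by parts $k$ times against the operator $\tfrac{i}{2\pi}\tfrac qL|\vc|^{-2}(\vc\cdot\nabla_\z)$, one gets $|\tilde I_q(\vc)| \lappr (q/L)^k|\vc|^{-k}\int |(\vc\cdot\nabla_\z)^k f_q|\,d\z$; as in the proof of Proposition~\ref{l:19} the derivatives falling on $h$ produce, via Lemma~\ref{l:4} and Lemma~\ref{l:5}, factors $(L/q)^{l+1}|F^m(\z)|^{-(\,\cdot\,)}$ or $(L/q)$ together with localization, and the derivatives on $w$ cost $\|w\|_{k-j,\,\cdot\,}$ with a decay weight $\lan\z\ran^{-(d+1)}$ to make the $\z$-integral converge. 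Carrying the bookkeeping through — this is essentially Lemma~22 of \cite{HB} — yields a bound of the shape $|\tilde I_q(\vc)| \lappr_{d,k,A,m} (L/q)\,(q L^{\gamma_1}/L)^{?}\,|\vc|^{-k}(\|w\|_{k,\,k+d+5}+\|w\|_{0,\,k+3d+4})$ type, i.e. each integration by parts gains a factor roughly $qL^{\gamma_1}/L \le L^{-\gamma_3+\gamma_1}$ which is $<1$ for $\gamma_3>\gamma_1$; choosing the number of integrations by parts $k=\bar N = \lceil d^2/\gamma_1\rceil - 2d$ makes this gain beat the entropy factors $\sum_{|\vc|\le L^{\gamma_1}}|\vc|^{-k}\cdot(\text{powers of }L)$ and $\sum_{q\le L^{1-\gamma_3}} q^{-d/2+1+\bar N}$ coming from $|S_q(\vc)|\lappr q^{d/2+1}$.

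Finally I would assemble the pieces: combining the large-$q$ tail, the small-$q$ main term, the Weyl bound $|S_q(\vc)|\lappr_A q^{d/2+1}$ from Lemma~\ref{l:25HB}, and summing the geometric-type series in $q$ and in $s=|\vc|$ (using $\#\{\vc:|\vc|\asymp s\}\lappr_d s^{d-1}$), one optimizes the split parameter $\gamma_3$ against $\gamma_1$ so that every term is $\lappr_{A,d,\gamma_1,m} L^{d/2+2+\gamma_1(d+1)}(\|w\|_{\bar N,d+5}+\|w\|_{0,\bar N+3d+4})$; the choice $\bar N=\lceil d^2/\gamma_1\rceil - 2d$ is exactly what is forced by requiring the $\z$-decay weights to stay at $d+5$ and $\bar N+3d+4$ respectively while still summing $s^{d-1-\bar N}$ against the $L$-powers. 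I expect the main obstacle to be the same one the authors flag for Sections~3--4: because $w$ need not vanish near the origin, the function $h(q/L,F^m(\z))$ is no longer smoothly cut off away from the cone $\Sigma_0$, so when $m$ is small (or zero) the region where $|F^m(\z)|$ is small must be handled separately — there one cannot integrate by parts freely in the $h$-argument and must instead use the measure-theoretic $L_1$-type bounds for the co-area slices $\II(t)$ near $t=0$ established in the Appendix (Proposition~\ref{p_2}), accepting the logarithmic loss there and checking it is absorbed into the stated $\gamma_1(d+1)$ exponent.
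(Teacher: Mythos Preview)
Your plan has a genuine gap: the integration-by-parts scheme you describe for the range $q\le L^{1-\gamma_3}$ is exactly the mechanism of Proposition~\ref{l:19}, and that mechanism cannot deliver the required saving when $|\vc|$ is small. The operator $\tfrac{q}{L}|\vc|^{-2}(\vc\cdot\nabla_\z)$ gains a factor $(q/L)|\vc|^{-1}$ per step, but each derivative that lands on $h(q/L,F^m(\z))$ costs a factor $(L/q)$ (Lemma~\ref{l:5}); in the worst term, where all $k$ derivatives hit $h$, these cancel completely and you are left with $|\tilde I_q(\vc)|\lappr (L/q)|\vc|^{-k}$, i.e.\ exactly the bound of Proposition~\ref{l:19}. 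For $|\vc|=1$ this gives only $|I_q(\vc)|\lappr L^{d+1}/q$, hence $|n(\vc)|\lappr L^{d+1}\sum_q q^{-d/2}\lappr L^{d+1}$, and already the single term $\vc=(1,0,\dots,0)$ contributes $L^{d+1}$ to $J_<^{\gamma_1}$, which exceeds the target $L^{d/2+2+\gamma_1(d+1)}$ by roughly $L^{d/2-1}$. No choice of $\gamma_3$ or $\bar N$ repairs this, because the linear phase $\z\cdot\vc L/q$ simply does not oscillate enough when $|\vc|$ is bounded.

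The missing idea is that one must also exploit oscillation in the \emph{quadratic} direction. The paper (Lemma~\ref{l:22} and its proof in \S\ref{sec:6}) does this by first writing $w(\z)h(q/L,F^m(\z))=\tilde w(\z)\int p(t)\,e(tF^m(\z))\,dt$ via the Fourier transform of $v\mapsto w_2(v)h(q/L,v)$, so that the $\z$-integral acquires the phase $tF(\z)-\vu\cdot\z$ with $\vu=\vc L/q$. This phase has a genuine stationary point at $\z=A^{-1}\vu/t$; after localising at scale $\de=|\vu|^{-1/2}$, the argument splits into a ``good'' region $S_R$ where the phase gradient is large (handled by integration by parts along $\nabla f(0)$, Lemma~\ref{l:good_set}) and a ``bad'' region $S_R^c$ of small volume near the stationary point (Lemma~\ref{l:bad_set}). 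It is the volume bound on $S_R^c$, essentially $(R/\de|t|)^d\sim R^d|\vu|^{-d/2}$, that produces the crucial saving of $L^{d/2}$ in $|I_q(\vc)|$ and yields $|I_q(\vc)|\lappr L^{d/2+1+\gamma_1}q^{d/2-1}$. Your proposal never introduces the $t$-variable or the quadratic phase, so it cannot access this saving. Incidentally, the cone-singularity issue you anticipate is not relevant here: the Appendix is used only for $J_0$, and the proof of Lemma~\ref{l:22} proceeds uniformly in $m$ without any recourse to Proposition~\ref{p_2}.
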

 Proposition~\ref{l:J_<} follows from the next lemma   which is a
 modification of Lemma~22 in \cite{HB}:
 \begin{lemma}\lbl{l:22}
 	For $|\vc|\leq L^{\gamma_1},$ $\vc\ne 0$,
 	$$|I_q(\vc)|\lappr_{A,d,{\gamma_1}{ ,m}}
 	L^{d/2+1+{\gamma_1}}q^{d/2-1} \left(\|w\|_{\bar N,d+5}+\|w\|_{0,\bar N
 		+3d+4}\right)\,,
 	$$
 	where $\bar N$ and $\gamma_1$ are the same as above.
 \end{lemma}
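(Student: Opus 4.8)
\emph{Proof proposal.} The plan is to prove Lemma~\ref{l:22} along the lines of \cite[Lemma~22]{HB}, the new ingredient being the bookkeeping of $\|w\|$--norms. Rescaling as in \eqref{Iq-t_Iq}--\eqref{tilde_I_q}, it suffices to bound $|\tilde I_q(\vc)|$ by a constant (depending on $A,d,\gamma_1,m$) times $L^{-d/2+1+\gamma_1}q^{d/2-1}\bigl(\|w\|_{\bar N,d+5}+\|w\|_{0,\bar N+3d+4}\bigr)$. I split into $q>CL$ and $q\le CL$ for a suitable fixed $C$. For $q>CL$ point~(1) of Lemma~\ref{l:4} forces $2|F^{m}(\z)|>q/L$ on the support of the integrand, hence $\lan\z\ran^2\gtrsim_{A,m}q/L$; combined with $|h|\lesssim L/q$ (Corollary~\ref{c:h}), with $N$ integrations by parts in $\z$ along $\vc$ as in the proof of Proposition~\ref{l:19}, and with the decay of $w$, this gives a bound decaying in $q$ faster than any fixed power of $L/q$ --- far more than needed. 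So assume $q\le CL$.

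The crude $\z$--integration by parts of Proposition~\ref{l:19} gives $|\tilde I_q(\vc)|\lesssim\tfrac Lq|\vc|^{-N}\|w\|_{N,2N+d+1}$, whose factor $L/q$ is harmless when $|\vc|>L^{\gamma_1}$ but ruinous here; it occurs because every derivative falls on $h\bigl(q/L,F^{m}(\z)\bigr)$. To avoid this I would integrate by parts \emph{tangentially} to the level hypersurfaces of $F$, using the vector field $W(\z)=\vc-\frac{A\z\cdot\vc}{|A\z|^2}A\z$ (the orthogonal projection of $\vc$ onto $(A\z)^{\perp}$): it is tangent to every $\Sigma_t$, as $W\cdot\nabla F^{m}=W\cdot A\z=0$, so $h$ is never differentiated, and $(W\cdot\vc)^{-1}W$ with $W\cdot\vc=|\vc|^2-(A\z\cdot\vc)^2/|A\z|^2\ge0$ reproduces $e_q(-L\vc\cdot\z)$ up to the factor $-2\pi iL/q$. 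Since $W$ blows up at $\z=0$ and $W\cdot\vc$ vanishes on the critical set $\{A\z\parallel\vc\}$ of the phase $\vc\cdot\z|_{\Sigma_m}$ --- which for $m$ near $0$ sits at, or (for the exceptional directions $A^{-1}\vc\cdot\vc=0$) along a ray through, the vertex $\z=0$ of the cone $\Sigma_0$ --- I would insert a partition of unity in $\z$ at the scales $|A\z|\sim\rho$ (with $\rho\le1$) and $|W\cdot\vc|\sim\mu|\vc|^2$. On the piece near $\{|A\z|<\rho\}\cup\{|W\cdot\vc|<\mu|\vc|^2\}$ no oscillation is used: by the co--area formula it is in modulus $\lesssim\|w\|_{0,0}\int_{|t|\le C'}h(q/L,t)\bigl(\int_{\Sigma_{m+t}\cap(\mathrm{near})}|A\z|^{-1}dz\bigr)dt\lesssim\bigl(\rho^{d-2}+\mu^{d-1}(\cdots)\bigr)\|w\|_{0,0}$, using $\int_{\Sigma_0\cap B_\rho}|A\z|^{-1}dz\asymp\rho^{d-2}$ and $\int_{|t|\le C'}h(q/L,t)dt\lesssim1$ (for $m\ne0$ this piece is in fact negligible, since $\Sigma_m$ then avoids a ball about $0$ and there $h(q/L,\cdot)$ is evaluated at arguments $\asymp|m|\gg q/L$). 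On the complementary ``far'' piece I integrate by parts $K$ times with $(W\cdot\vc)^{-1}W$; $h$ is untouched and concentrates in a shell of width $\asymp q/L$, each step contributes the gain $q/L$ times the adjoint operator applied to $w$, whose coefficients on the far region are a polynomial in $(\z,\vc)$ over powers of $W\cdot\vc$ and $|A\z|$ of size $\lesssim(\rho\mu|\vc|)^{-O(K)}$, so $|\tilde I_q^{\,\mathrm{far}}(\vc)|\lesssim(q/L)^{K}(\rho\mu)^{-O(K)}\bigl(\|w\|_{K,\cdot}+\|w\|_{0,\cdot}\bigr)$.

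It remains to optimize $\rho,\mu,K$: choosing $\rho=\mu=\min\{1,(q/L)^{1/2}L^{\gamma_1/(d-2)}\}$ makes the near piece $\lesssim L^{-d/2+1+\gamma_1}q^{d/2-1}$, while the far piece then becomes $\lesssim L^{-c\gamma_1 K}(\cdots)$ and drops below the same quantity once $K$ is of order $d^{2}/\gamma_1$; this is the origin of the derivative count $\bar N=\lceil d^2/\gamma_1\rceil-2d$, and the weights $d+5$, $\bar N+3d+4$ then emerge as the decay of $w$ needed for the surface integrals over $\Sigma_{m+t}$ to converge once the polynomial--in--$\z$ factors produced by the $K$--fold differentiation are included. \emph{The main obstacle} is exactly this accounting: one must check that one tangential integration by parts costs only $\asymp(\rho\mu|\vc|)^{-2}$ (so that $K\asymp d^2/\gamma_1$ really suffices), that the cut--offs --- functions of $|A\z|$ and of the angle between $A\z$ and $\vc$, at scales $\rho$ and $\mu$ --- contribute at each differentiation only the expected $\rho^{-1}$ and $\mu^{-1}$, and that all estimates are uniform for $m$ in a compact interval, including $m=0$, where the phase's critical set collapses onto the singular point of $\Sigma_0$ (and becomes a ray for the exceptional $\vc$). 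Unlike the $\vc=0$ analysis in Sections~\ref{sec:3}--\ref{sec:4}, no information on the smoothness or decay of $m\mapsto\sigma_\infty(F,w,m)$ is required here, since the oscillatory factor $e_q(-L\vc\cdot\z)$ already supplies the gain.
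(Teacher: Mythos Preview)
Your approach is genuinely different from the paper's. The paper never integrates by parts tangentially to the level sets of $F$. Instead it writes $w(\z)h(q/L,F^{m}(\z))=\tilde w(\z)\,w_2(F^{m}(\z))\,h(q/L,F^{m}(\z))$ with $w_2(v)=(1+v^2)^{-1}$ and $\tilde w=w/w_2(F^{m})$, takes the \emph{inverse Fourier transform} of $v\mapsto w_2(v)h(q/L,v)$, and thus turns $\tilde I_q(\vc)$ into $\int p(t)e(-tm)\bigl(\int\tilde w(\z)\,e(tF(\z)-\vu\cdot\z)\,d\z\bigr)\,dt$ with $\vu=L\vc/q$. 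The inner integral is now a standard oscillatory integral with quadratic phase $tF(\z)-\vu\cdot\z$; the paper mollifies $\tilde w$ at scale $\delta=|\vu|^{-1/2}$ (so the Hessian contribution $\delta^2|t|\,\|A\|$ is controlled), and splits $(\va,t)$ into a ``good'' set $S_R$ where the mollified phase has gradient $\ge R\lan\delta^2 t\ran$ and its complement. On $S_R$ one integrates by parts $N$ times in the direction of that gradient (Lemma~\ref{l:good_set}); on $S_R^c$ one uses only that $\va$ lies in a small ball about $A^{-1}\vu/t$ (Lemma~\ref{l:bad_set}). The choice $R=|\vu|^{\gamma_1/2d}$, $N=\bar N$, $\beta=\gamma_1/2$ then gives the lemma. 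The virtue of this route is that after Fourier the phase is globally quadratic and its single critical point $\z=A^{-1}\vu/t$ is handled uniformly in $\vc$; neither the singularity at $\z=0$ nor the ``exceptional'' directions $A^{-1}\vc\cdot\vc=0$ play any special role.

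Your scheme could in principle be carried through, but the sketch has two soft spots. First, on the angular--near piece you bound by $\|w\|_{0,0}\,\mu^{d-1}(\cdots)$; but for $F=F_0$ and exceptional $\vc$ (i.e.\ $\vc_1\cdot\vc_2=0$, a codimension--one set of lattice points that cannot be discarded) the cone $\{\sin\theta(A\z,\vc)<\sqrt\mu,\ |A\z|\ge\rho\}$ contains the full ray $\R A^{-1}\vc\subset\Sigma_0$, and the surface integral $\int_{\Sigma_{m+t}\cap(\text{ang.\,near})}|A\z|^{-1}dz$ diverges for $m+t=0$ unless you invoke decay of $w$. So the near estimate must use $\|w\|_{0,N}$ with $N>d-2$, which then feeds back into the optimization. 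Second, the ``$(\rho\mu|\vc|)^{-O(K)}$'' is too coarse to certify that $K=\bar N=\lceil d^2/\gamma_1\rceil-2d$ suffices: one tangential adjoint step applied to $W/(W\cdot\vc)$ produces $\nabla(|A\z|^{-2})$ and $\nabla(W\cdot\vc)^{-1}$, each losing a full power of $\rho$ or $\mu$, so the honest gain per step is $\frac{q}{L}\,(\sqrt\mu\,|\vc|)^{-1}$ times a factor $\lesssim(\rho\mu)^{-1}$ from differentiated coefficients; with $\rho=\mu\sim(q/L)^{1/2}$ this may not beat the target without a sharper count. The paper's Fourier route avoids both issues at once, at the cost of introducing the auxiliary $p(t)$ and the mollifier $W_0$.
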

 
 {\it Proof of Proposition \ref{l:J_<}}.
 Accordingly to Lemma~\ref{l:25HB},
 \begin{align*}
 |J^{\gamma_1}_<| &\lappr_A \!
 \sum_{\vc\ne 0,\,|\vc|\leq
 	L^{{\gamma_1}}} \sum_{q=1}^\infty q^{-d}
 q^{d/2+1} |I_q(\vc)|\lappr_{A,d} L^{d{\gamma_1}}\!\! \max_{\vc\ne0:\,|\vc|\leq L^{\gamma_1}} |I_q(\vc)|\,
 \sum_{q=1}^\infty q^{-d/2+1}
 \\
 &=L^{d{\gamma_1}} \big(\sum_{q<L} + \sum_{q\geq L}\big) q^{-d/2+1}
 \max_{\vc\ne0:\,|\vc|\leq L^{\gamma_1}} |I_q(\vc)|=:J_{-} + J_{+}.
 \end{align*}
 Corollary~\ref{c:h} together with \eqref{Iq-t_Iq}, \eqref{tilde_I_q} implies
 \be\lbl{Iq_large_q}
 |I_q(\vc)|\lappr \frac{ L^{d+1}}q \|w\|_{L_1}\,,
 \ee
 so that
 $$
 J_{+}\lappr L^{d{\gamma_1}}  L^{d+1} \|w\|_{L_1} \sum_{q\geq L} q^{-d/2}  \lappr  L^{d{\gamma_1} + d/2+2} \|w\|_{L_1}.
 $$
 By Lemma~\ref{l:22} we get
 \begin{equation*}
 \begin{split}
 J_{-}
 &\lappr_{A,d,{\gamma_1}{ ,m}}  L^{d{\gamma_1}} \left(\|w\|_{\bar N,d+5}+\|w\|_{0,\bar N
 	+3d+4}\right)\sum_{q< L}  L^{d/2+1+{\gamma_1}}
 \\
 &\le   \left(\|w\|_{\bar N,d+5}+\|w\|_{0,\bar N
 	+3d+4}\right) L^{{\gamma_1}(d+1)+d/2+2}\,.
 \end{split}
 \end{equation*}
 \qed

\subsection{Proof of Lemma~\ref{l:22}}
We begin with
\subsubsection{An application of the inverse Fourier transform}
 Note that the proof is nontrivial only for $q\lappr L$ since  for
 any $\alpha>0$, the bound \eqref{Iq_large_q} implies
 that  for $q\ge \alpha L$ we have
 $$|I_q(\vc)| \lappr_\alpha L^d\|w\|_{L_1} \lappr_{\alpha,d}  L^{d/2+1}q^{d/2-1} \|w\|_{L_1}\,.$$
 
 Let us take a small enough $\alpha=\alpha(d,\gamma_1, A) \in(0,1)$   and assume that $q< \alpha L$. Consider the positive
 function $w_2(x)=1/(1+x^2)$ and set 
 $$
 \tilde w(\z):= \frac{w(\z)}{w_2( {  F^m}(\z))} = {w(\z)} (1+  {  F^m}(\z)^2). 
 $$ 
 
 Let
 \be\lbl{p(t)=}
 p(t):= \int_{-\infty}^{+\infty}  w_2(v) h(q/L,v)  e(-tv) \,dv, \quad e(x):=e_1(x)=
 e^{2\pi i x}.
 \ee
 This is  the Fourier transform of the function $w_2(\cdot)h(q/L,\cdot)$.
 Then, expressing $w_2 h$ via $p$ 
 by the inverse Fourier transform, we find that 
 $$
 w(\z)h(q/L, {  F^m}(\z))= \tilde w(\z)\int_{-\infty}^{+\infty} p(t)e(t {  F^m}(\z))\,dt.
 $$
 Inserting this representation into \eqref{tilde_I_q} we get
 \begin{equation*}
 \tilde I_q(\vc)=\int_{-\infty}^{+\infty} dt\, p(t){ e(-tm)}\,\int_{\R^{d}}d\z\, \tilde w(\z) e\big(tF(\z)-\vu\cdot \z\big),
 \end{equation*}
 where we define
 $$
 \vu:=\vc\, L/q.
 $$
 Note that $|\vu|\geq \alpha^{-1}>1$. Now let us 
   denote
 $
 W_0(x) = c_0^{-d}\prod_{i=1}^{d}w_0(x_i) 
 $
 (see \eqref{eq:or_weight}). Then $W_0\in C_0^\infty (\R^d)$, $W_0\ge0$ and 
 \be\label{then}
  \supp W_0 =[-1, 1]^d \subset \{ |x| \le \sqrt{d} \},\quad \int W_0 =1.
 \ee
 Let us set
 $
 \de  = | \vu_0|^{-1/2} 
 $
 and write $\tilde w$ as 
 $$
 \tilde w(\z) = \de^{-d} \int W_0\left( \frac{\z-\va}{\de}\right) \tilde w(z)\,d\va. 
 $$
 Then  setting $\vb:=\displaystyle{\frac{\z-\va}{\de}}$  we get that 
 $$
 |\tilde I_q(\vc)|\leq \int_{\R^{d}}d\va\,\int_{-\infty}^{+\infty} dt\, |p(t)| 
 |I_{\va,t}|,
 $$
 where 
 \begin{equation*}
 I_{\va,t}:=\int_{\{ |\vb| \le \sqrt{d} \} }  W_0(\vb) \tilde w(\z) \, e(tF(\z) - \vu\cdot\z)\,d\vb, \qquad \z:=\va+\de\vb
 \end{equation*}
 (in virtue of \eqref{then}). 
 Consider the exponent in the integral $ I_{\va,t}$:
 $$
 f(\vb)=f_{\va,t}(\vb):=tF(\va+ \de\vb) - \vu\cdot (\va+ \de\vb).
 $$
 At  the next step we will estimate  integral $I_{\va,t}$, 
 regarding  $(\va,t)$ as a parameter, and distinguishing two cases:

 1. $(\va,t)$ belongs to the   "good" domain $S_R$, where 
 $$
 S_R = \big\{ ( \va,t): 
  |\nabla f(0)|=\de |t A\va - \vu| \ge 
  R\big\lan  t /  |\vu \big\ran =R\lan \de^2t\ran \big\}. 
 $$ 
 
 2. $(\va,t)$ belongs to the  "bad" set $S_R^c = (\R^d \times \R) \setminus S_R$. 
 \smallskip
 
 \noindent 
 Here 
 $$\ds{1\leq  R\leq |\vu|^{1/3}}$$
 is a parameter to be chosen later.

 \subsubsection{Integrals over $S_R$ and $S_R^c$.}
 We consider first the integral over the good set~$S_R$: 
 \begin{lemma}\lbl{l:good_set}
 	For any $N\ge 0$ and $R\ge 2\|A\|\sqrt{d}$ we have
 	\be\lbl{int_to_bound}
 	\int_{S_R}d\va\, dt |p(t)| \, |I_{\va, t} | 
 	\lappr_{d,N{ , m}} \frac{L}{q}R^{-N}\|w\|_{N,d+5}\,.
 	\ee
 \end{lemma}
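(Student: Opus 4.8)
The plan is to prove \eqref{int_to_bound} by a non-stationary phase (repeated integration by parts) argument: on the good set $S_R$ the gradient of the phase of $I_{\va,t}$ is bounded below by a constant multiple of $R$, so $N$ integrations by parts in $\vb$ produce the gain $R^{-N}$, after which the remaining integrations in $\va$ and $t$ are absolutely convergent with total mass $\lappr L/q$. First I would analyse the phase $f(\vb)=f_{\va,t}(\vb)=tF(\va+\de\vb)-\vu\cdot(\va+\de\vb)$. Since $F$ has symmetric matrix $A$, one has $\nabla_\vb f(\vb)=\de(tA\va-\vu)+\de^2 tA\vb$, $\nabla_\vb^2 f\equiv\de^2 tA$, and $\nabla_\vb^k f\equiv 0$ for $k\ge 3$. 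On $\supp W_0\subset\{|\vb|\le\sqrt d\}$ we have $|\de^2 tA\vb|\le\sqrt d\,\|A\|\,\de^2|t|$, while on $S_R$ we have $\de|tA\va-\vu|\ge R\lan\de^2 t\ran\ge R\de^2|t|$; since $R\ge 2\sqrt d\,\|A\|$ this forces $\sqrt d\,\|A\|\,\de^2|t|\le\tfrac12\de|tA\va-\vu|$, whence
\[
|\nabla_\vb f(\vb)|\ \ge\ \tfrac12\,\de\,|tA\va-\vu|\ \ge\ \tfrac12 R\,\lan\de^2 t\ran\qquad\text{on }\supp W_0,\ \ (\va,t)\in S_R .
\]
(Recall also $q<\alpha L<L$, so $q/L<1$, and $\de=|\vu|^{-1/2}<1$.)

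Next, writing $g(\vb):=W_0(\vb)\tilde w(\va+\de\vb)$, so that $I_{\va,t}=\int_{\{|\vb|\le\sqrt d\}}g(\vb)\,e(f(\vb))\,d\vb$, I would integrate by parts $N$ times using the operator $(2\pi i)^{-1}|\nabla f|^{-2}\,\nabla f\cdot\nabla_\vb$, which fixes $e(f)$. Because $|\nabla_\vb^2 f|=\de^2|t|\,\|A\|\le\|A\|\lan\de^2 t\ran$ and $\nabla_\vb^{\ge 3}f=0$, the standard non-stationary phase estimate gives $|I_{\va,t}|\lappr_{d,N}(R\lan\de^2 t\ran)^{-N}\max_{|\beta|\le N}\sup_{|\vb|\le\sqrt d}|\nabla_\vb^\beta g(\vb)|$. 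Now $\nabla_\vb^\beta g$ is a combination of $(\nabla^{\beta'}W_0)(\vb)\,\de^{|\beta-\beta'|}(\nabla^{\beta-\beta'}\tilde w)(\va+\de\vb)$; since $\de<1$, $W_0$ is a fixed $C^\infty_0$ function, and $\tilde w(\z)=w(\z)(1+F^m(\z)^2)$ with $F^m$ a polynomial of degree $2$, the Leibniz rule yields $|\nabla^j\tilde w(\z)|\lappr_{N,m}\|w\|_{N,d+5}\lan\z\ran^{4}\lan\z\ran^{-d-5}=\|w\|_{N,d+5}\lan\z\ran^{-d-1}$ for $j\le N$, and $\lan\va+\de\vb\ran\gtrsim_d\lan\va\ran$ when $|\vb|\le\sqrt d$. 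Hence
\[
|I_{\va,t}|\ \lappr_{d,N,m}\ (R\,\lan\de^2 t\ran)^{-N}\,\|w\|_{N,d+5}\,\lan\va\ran^{-d-1}.
\]

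Finally I would insert this into \eqref{int_to_bound}, enlarging $S_R$ to $\R^d\times\R$, obtaining the bound $R^{-N}\|w\|_{N,d+5}\big(\int_{\R^d}\lan\va\ran^{-d-1}d\va\big)\big(\int_\R|p(t)|\lan\de^2 t\ran^{-N}dt\big)$, in which the first integral is $\lappr_d 1$. For the second, recall from \eqref{p(t)=} that $p$ is the Fourier transform of $v\mapsto w_2(v)h(q/L,v)$ with $w_2(v)=(1+v^2)^{-1}$; Corollary~\ref{c:h} together with Lemmas~\ref{l:4} and~\ref{l:5} (which give $\partial_v^k h(q/L,v)=0$ for $|v|<q/(2L)$ and $\lappr_k (L/q)|v|^{-k}\min\{1,(q/(L|v|))^{k'}\}$ for $|v|\ge q/(2L)$, any $k'$) yield $\|(w_2 h(q/L,\cdot))^{(n)}\|_{L_1}\lappr_n(L/q)^n$ for all $n\ge 0$, hence $|p(t)|\lappr_n\min\{1,(L/(q|t|))^n\}$. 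Since $\de^2=q/(|\vc|L)$ and $|\vc|\ge1$, we have $\de^2|t|\le1$, so $\lan\de^2 t\ran\lappr1$, on the entire range $|t|\le L/q$; therefore $\int_{|t|\le L/q}|p(t)|\lan\de^2 t\ran^{-N}dt\lappr\int_{|t|\le L/q}|p(t)|\,dt\lappr L/q$, while from $|p(t)|\lappr(L/(q|t|))^2$ also $\int_{|t|>L/q}|p(t)|\,dt\lappr L/q$. Altogether $\int_\R|p(t)|\lan\de^2 t\ran^{-N}dt\lappr L/q$, which combined with the above gives \eqref{int_to_bound}.

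The step I expect to be the main obstacle is the last one's bookkeeping: one must carry the weight $\lan\de^2 t\ran$ produced by the integration by parts through the $t$-integration and verify that, against the sharp decay of $p$ — which lives on the scale $|t|\lappr L/q$ — it collapses to \emph{exactly} the factor $L/q$ with no residual power of $|\vc|$. This is precisely what dictates the choice $\de^2=q/(|\vc|L)$, which makes $\lan\de^2 t\ran$ essentially constant on $\supp p$; keeping the constants in the non-stationary phase lemma free of $A$-dependence beyond $\|A\|$ (so as to match the statement) also requires a little care.
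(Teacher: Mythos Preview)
Your proof is correct and follows essentially the same non-stationary phase strategy as the paper. Two minor simplifications are worth noting: first, since $\lan\de^2 t\ran\ge 1$, the factor $(R\lan\de^2 t\ran)^{-N}\le R^{-N}$ can be dropped immediately after the integration by parts, so your anticipated ``bookkeeping obstacle'' never arises and one only needs the crude bound $\int_\R|p(t)|\,dt\lappr L/q$ (proved exactly as you indicate); second, the paper integrates by parts along the \emph{fixed} direction $\vl=\nabla f(0)/|\nabla f(0)|$ rather than with the variable-coefficient operator $|\nabla f|^{-2}\nabla f\cdot\nabla$, which slightly streamlines the algebra --- the key inequality being $|\cL^2 f|/|\cL f|\le 2\|A\|/R\le 1/\sqrt d$ on $S_R$.
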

 
 {\it Proof.}
  Let $\vl:=\nabla f(0)/|\nabla
 f(0)|$  and  $\cL =\vl\cdot \nabla_{\vb}$.  Then for $(\va,t)\in S_R$, 
 \be\lbl{nabla_f}|\cL f(\vb)|\ge
 |\nabla f(0)| - \delta^2|t||A\vb|\ge  R\lan\de^2 t \ran
 - \de^2|t|\|A\| \frac{R}{2\|A\|} \ge \tfrac12 R\lan\de^2 t \ran
 \geq R/2.
 \ee
 Since 
 $
 (2\pi i \cL f(b))^{-1} \cL e(f(b)) = e(f(b)), 
 $
 then integrating by parts  $N$ times in the integral for $ I_{\va,t}$   we get that 
 $$
 |I_{\va,t}| \lappr_{d,N}
 \sup_{|b_i|\le
 	1\, \forall i}\,\max_{0\leq k\leq N}\left|\cL^{N-k} \tilde
 w(\delta \vb +\va)\frac{\big(\cL^2f(\vb)\big)^k}{\big(\cL f(\vb)\big)^{N+k}}\right| \,,
 $$
 where we have used that $\cL^m f(\vb)=0$ for every $m\geq 3$. 
 Since $|\cL^2f(\vb)|\leq \de^2|t||\vl\cdot A\vl|\leq\de^2|t|\|A\|$, then  in view of \eqref{nabla_f} 
 $$
 \left|\frac{\cL^2 f(\vb)}{\cL f(\vb)}\right|  \leq\frac{\de^2|t|\|A\|}{\tfrac12 R\lan\de^2 t \ran }=\frac{2\|A\|}{R}\leq \frac{1}{\sqrt{d}}.
 $$
 So using that $\ds{\Big|\frac{1}{\cL f(\vb)}\Big|\leq\frac2R}$ by \eqref{nabla_f}, we find
 $$
 |I_{\va,t}| \lappr_{d,N} R^{-N}
 \sup_{|b_i|\leq 1 \,\forall i}\,\max_{0\leq k\leq N}\left|\cL^{k} \tilde
 w(\delta \vb +\va)\right|. 
 $$
 Thus, denoting by  ${\bf 1}_{S_R}$  the indicator function of the set $S_R$, we have 
 \be\non
 \begin{split}
 	\int_{\R^{d}}|I_{\va,t}|{\bf 1}_{S_R}\,d\va
 	&\lappr_{d,N} R^{-N}\!\int_{\R^{d}} \frac{d\va}{\lan\va\ran^{d+1}} 
 	\Big(\lan\va\ran^{d+1} \!\!
 	\sup_{|b_i|\le 1\,\forall i}\, \max_{0\leq k\leq N}\big|\cL^k\tilde
 	w(\delta \vb +\va)\big| \Big)
 	\\
 	& \lappr_{d,N} R^{-N}\|\tilde w\|_{N,d+1}
        \lappr_{d,N{ , m}} R^{-N}\|w\|_{N,d+5}  \,,
 \end{split}
 \ee
 for every $t$.  Then 
 \be
 \mbox{l.h.s. of \eqref{int_to_bound}}\lappr_{d,N{ , m}} R^{-N}\|w\|_{N,d+5} \int_{-\infty}^{+\infty} |p(t)| \, dt.
 \ee
 
 It remains to show that
 \begin{equation}\label{intp(t)}
 \int_{-\infty}^\infty|p(t)|dt\lappr L/q\,.
 \end{equation}
 In virtue of Lemma~\ref{l:5},
 $$
 \Big|\frac{\p^k}{\p v^k} h(x,v)\Big| \lappr_k x^{-k-1}\min\{1,x^2/v^2\}\,,\quad k\geq 1,
 $$
 and by Corollary~\ref{c:h}, $|h(x,v)|\lappr x^{-1}$.
 Then, a simple integration by parts in \eqref{p(t)=} shows that,
 for any $M\geq 0$, 
 \[\begin{split}
 |p(t)| & \lappr_M |t^{-M}|
 \Big( 
 \int_{-\infty}^\infty
 |w_2^{(M)}(v)|x^{-1}\,dv \\
 &+  \max_{1\leq k \leq M}\int_{-\infty}^\infty
 |w_2^{(M-k)}(v)| \, x^{-k-1}\min\big\{1,\frac{x^2}{v^2}\big\}\,dv\Big),
 \end{split}
 \] 
 where $x:=q/L<\alpha$. Writing  the latter integral as a sum $\int_{|v|\leq x} + \int_{|v|>x}$ we see that
 $$
 \int_{|v|\leq x}= x^{-k-1}\int_{|v|\leq x} |w_2^{(M-k)}(v)|\,dv \lappr_k x^{-k}
 $$
 and 
 $$
 \int_{|v|>x} = x^{-k+1}\int_{|v|>x} \frac{ |w_2^{(M-k)}(v)|}{v^2}\,dv \lappr_k x^{-k}.
 $$
 Then, since $x=q/L<1$,
 \begin{equation}\lbl{p(t)<}
 |p(t)| \lappr_M \left(\frac{q}{L}|t|\right)^{-M}\,,\qu M\geq 0.
 \end{equation}
 Choosing $M=2$ when $|t|>L/q$ and $M=0$ when $|t|\leq L/q$ we get \eqref{intp(t)}.

 \qed
 
 Then we study the integral over the bad set ${S_R}^c$.
 \begin{lemma}\label{l:bad_set}
 	For any  $1\le R\le |\vu|^{1/3}$ and $0<\beta<d^2/2$ we have
 	$$
 	\int_{S_R^c}d\va \,dt |p(t)| |I_{\va, t}| 
 	\lappr_{A,d{ ,m}}  R^{d}|\vu|^{-d/2+1+\beta}\|w\|_{0,K(d,\beta)}\,,
 	$$
 	where $K(d,\beta)=  d+\lceil d^2/2\beta\rceil+4$.
 \end{lemma}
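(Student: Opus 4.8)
\emph{Proof plan.} The plan is to exploit that the failure of integration by parts in $\vb$ on $S_R^c$ (the obstruction in Lemma~\ref{l:good_set}) is compensated by the smallness of the $\va$--slice of $S_R^c$ at a fixed $t$. So on $S_R^c$ I would keep only the trivial bound on $I_{\va,t}$, and win instead from the volume and the position of that slice, together with the decay of $p(t)$ recorded in \eqref{p(t)<}, \eqref{intp(t)} and the decay of $w$ at infinity. For the pointwise bound: since $\tilde w(\z)=w(\z)\bigl(1+F^m(\z)^2\bigr)$ and $|F^m(\z)|\lappr_{A,m}\lan\z\ran^2$, one has $|\tilde w(\z)|\lappr_{A,m}\|w\|_{0,K}\lan\z\ran^{4-K}$ with $K:=K(d,\beta)$, whence, using $\de=|\vu|^{-1/2}\le1$,
\begin{equation*}
|I_{\va,t}|\le\int_{\{|\vb|\le\sqrt d\}}W_0(\vb)\,\bigl|\tilde w(\va+\de\vb)\bigr|\,d\vb\lappr_{A,d,m}\|w\|_{0,K}\,\lan\va\ran^{4-K}.
\end{equation*}

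Next I would analyse the slice $E_t:=\{\va:(\va,t)\in S_R^c\}=\bigl\{\va:|tA\va-\vu|<|\vu|^{1/2}R\lan\de^2t\ran\bigr\}$. Because $R\le|\vu|^{1/3}<|\vu|^{1/2}$, the slice $E_0$ is empty. For $t\ne0$ the substitution $\vw=tA\va$ identifies $E_t$ with an ellipsoid, so $|E_t|\lappr_A\bigl(|\vu|^{1/2}R\lan\de^2t\ran/|t|\bigr)^d$ and $\dist(0,E_t)\gtrsim_A\max\{0,\,|\vu|-|\vu|^{1/2}R\lan\de^2t\ran\}/|t|$. Since $4-K<0$, these give
\begin{equation*}
\int_{E_t}\lan\va\ran^{4-K}\,d\va\lappr_{A,d}|E_t|\cdot\max\{1,\dist(0,E_t)\}^{\,4-K}.
\end{equation*}

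Then I would split the $t$--integral at $|t|=|\vu|$, i.e.\ at $\de^2|t|=1$. If $|t|\le|\vu|$ then $\lan\de^2t\ran=1$, and as $|\vu|^{1/2}R\lappr|\vu|^{5/6}$ the slice $E_t$ sits at distance $\gtrsim_A|\vu|/|t|$ from the origin with $|E_t|\lappr_A(|\vu|^{1/2}R/|t|)^d$, provided $|\vu|$ exceeds a suitable constant depending on $A$ and $d$; this yields $\int_{E_t}\lan\va\ran^{4-K}d\va\lappr_{A,d}R^d|\vu|^{d/2+4-K}|t|^{K-4-d}$, which integrated against $|p(t)|\lappr1$ over $0<|t|\le|\vu|$ (convergent at $0$ since $K>d+3$) contributes $\lappr_{A,d}R^d|\vu|^{-d/2+1}$. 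If $|t|>|\vu|$ then $\lan\de^2t\ran\sim|t|/|\vu|$, so $|E_t|\lappr_AR^d|\vu|^{-d/2}$ and $E_t$ lies in a fixed ball around the origin (of radius $\lappr_AR/|\vu|^{1/2}\le|\vu|^{-1/6}$), whence $\int_{E_t}\lan\va\ran^{4-K}d\va\lappr_{A,d}R^d|\vu|^{-d/2}$; since by \eqref{intp(t)} $\int_{|t|>|\vu|}|p(t)|\,dt\le\int_\R|p(t)|\,dt\lappr L/q=|\vu|/|\vc|\le|\vu|$, this range also contributes $\lappr_{A,d}R^d|\vu|^{-d/2+1}$. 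When $|\vu|$ is bounded (by the above constant) one argues crudely: $\int_\R|p|\,dt\cdot\int_{\R^d}\lan\va\ran^{4-K}\,d\va\lappr_{A,d,m}\|w\|_{0,K}$ (using $\int_\R|p|\lappr L/q\le|\vu|$ and $K>d+4$), which is $\lappr_{A,d,m}R^d|\vu|^{-d/2+1}\|w\|_{0,K}$ there since $R\ge1$ and $|\vu|^{-d/2+1}\gtrsim_{A,d}1$. Adding all contributions,
\begin{equation*}
\int_{S_R^c}d\va\,dt\,|p(t)|\,|I_{\va,t}|\lappr_{A,d,m}R^d|\vu|^{-d/2+1}\|w\|_{0,K}\qquad\text{for every }K\ge d+5,
\end{equation*}
and since $K(d,\beta)=d+\lceil d^2/(2\beta)\rceil+4\ge d+5$ and $|\vu|^{-d/2+1}\le|\vu|^{-d/2+1+\beta}$, this yields the claimed estimate (in fact with room to spare).

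The step I expect to be most delicate is the bookkeeping of the ellipsoids $E_t$: one has to identify the range of $|t|$ in which $E_t$ sits far from the origin — where the polynomial decay $\lan\va\ran^{4-K}$ of $\tilde w$ must defeat the growing volume $|E_t|\sim|t|^{-d}$ of the slice — versus the range in which $E_t$ is near the origin, where the rapid decay of $p(t)$ has to take over instead; and one must keep in mind that the emptiness of $E_0$, used to rule out a genuine divergence, is exactly where the hypothesis $R\le|\vu|^{1/3}$ enters.
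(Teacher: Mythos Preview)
Your argument is correct and in fact sharper than the paper's: you obtain the bound $R^d|\vu|^{-d/2+1}\|w\|_{0,K}$ for any $K\ge d+5$, with no $|\vu|^{\beta}$ loss, and only afterwards invoke $K(d,\beta)\ge d+5$ and $|\vu|^{-d/2+1}\le|\vu|^{-d/2+1+\beta}$ to match the stated lemma. The route, however, is genuinely different. The paper splits the $t$--integration at $|t|=|\vu|^{1-\beta/d}$; for $|t|$ above that threshold it keeps only the trivial bound $|I_{\va,t}|\le\|\tilde w\|_{0,0}\lappr\|w\|_{0,4}$ and uses just the volume of the slice (which is $\lappr_A R^d|\vu|^{-d/2+\beta}$), paying for this crude treatment with the factor $|\vu|^{\beta}$; for $|t|$ below the threshold it exploits that the slice sits at distance $\gtrsim|\vu|^{\beta/d}$ from the origin via the device $\mathbf 1_{S_R^c}\lappr|\vu|^{-d/2+\beta/d}|\va|^{d^2/(2\beta)-1}$, which is exactly what forces the large exponent $K(d,\beta)$. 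Your approach instead splits at $|t|=|\vu|$ and uses the pointwise decay $|I_{\va,t}|\lappr\|w\|_{0,K}\lan\va\ran^{4-K}$ uniformly in $t$, combining the volume $|E_t|$ and the distance $\dist(0,E_t)$ of the slice in a single estimate; the integral $\int_0^{|\vu|}|t|^{K-4-d}\,dt$ then closes without any $\beta$--dependent loss. What the paper's method buys is that it never needs to track volume and position simultaneously, and the role of $\beta$ (and hence of $K(d,\beta)$) is transparent; what your method buys is a cleaner exponent and the freedom to take $K$ as small as $d+5$.
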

 
 {\it Proof.}
 On ${S_R}^c$ we use for $I_{\va, t}$ the easy upper bound
 \begin{equation}\lbl{I(a,t)<}
 |I_{\va, t}|\lappr_d \sup_{|b_i|\le 1\,\forall i}|\tilde w(\de \vb +\va)|\ \leq \|\tilde w\|_{0,0}.  
 \end{equation}
 The fact that $(\va,t)\in {S_R}^c$ implies that the integration in 
  $d \va$ for a  fixed $t$ is restricted to the region where
$$
 \left|A\va -\frac{\vu}{t} \right| \le  \frac{R}{\de |t|} \lan  t / |\vu| \ran \,,
 $$
 or
  \begin{equation}\label{bound_int_a}
 \left|\va -\frac{A^{-1}\vu}{t}\right|\le  \|A^{-1}  \| \, \frac{R}{\de |t|} \, \lan  t / |\vu| \ran \,.
 \end{equation}
 We first consider  the case $|t|\ge |\vu|^{1-\beta/d}$. Since $\de=|\vu|^{-1/2}$ and $|\vu| >1$, then 
 \be\label{grr}
 \frac{R}{\de
 	|t|}\lan  t / |\vu| \ran \le R\max(|\vu|^{-1/2+\beta/d}, |\vu|^{-1/2})=R|\vu|^{-1/2+\beta/d}\,.
\ee
 In view of  \eqref{I(a,t)<} -\eqref{grr},
 $$
 \left|\int_{\R^{d}} |I_{\va,t}|{\bf 1}_{{S_R}^c}(\va,t)d\va\right|\lappr_{A,d}
 {R^{d}}|\vu|^{-d/2+\beta} \|\tilde
 w\|_{0,0}\lappr_{A,d{ , m}} {R^{d}}|\vu|^{-d/2+\beta} \|  w\|_{0,4} \,. 
 $$
 Since by  \eqref{intp(t)} 
 $
 \int_{|t|\ge |\vu|^{1-\beta/d}} |p(t)|\,dt
  \lappr \frac{L}{q}\le|\vu|\,,
 $
 then 
 \begin{equation}\label{t>u}
 \int_{|t|\ge |\vu|^{1-\beta/d}}dt \,\int_{\R^{d}}d\va \, |p(t)| |I_{\va, t}|{\bf 1}_{{S_R}^c}(\va,t) \lappr_{A,d{ ,m}}
 R^{d}|\vu|^{-d/2+1+\beta} \|  w\|_{0,4}\,.
 \end{equation}
 
 Now let  $|t| \le |\vu|^{1-\beta/d}$. Then  the r.h.s. of \eqref{bound_int_a}  is bounded by
 $
  \|A^{-1}  \|  {R}/{(\de |t|)}.
 $
 So
 $$
 |\va|\gtrsim_A
  \frac{|\vu|- R\sqrt{|\vu|}}{|t|}\ge (1-|\vu|^{-1/6})\frac{|\vu|}{|t|} \ge
 \frac12\frac{|\vu|}{|t|}\ge \frac 12 |\vu|^{\beta/d}\ 
 $$
 since $|\vu|^{-1}\leq \alpha$, 
 if $\alpha$  is so small that $1-\alpha^{1/6}\geq 1/2$.
 Then ${\bf 1}_{{S_R}^c}(\va,t) \lappr_{A,d}   |\vu|^{-d/2+\beta/d}  |\va|^{d^2/2\beta-1}$, and  we deduce from
 \eqref{I(a,t)<} that for such values of $t$
 \begin{equation*}
 \begin{split}
 \left|\int_{\R^d} |I_{\va,t}| {\bf 1}_{{S_R}^c}(\va,t) d\va\right|&\lappr_{A,d} |\vu|^{-d/2+\beta/d}
 \int_{\R^{d}} |\va|^{d^2/2\beta-1} \sup_{|b_i|\le 1\,\forall i}|\tilde w(\de \vb +\va)|\, d\va
 \\
 &\lappr_{A,d{ ,m} } |\vu|^{-d/2+\beta/d} \|w\|_{0,K(d,\beta)}\,, 
 \end{split}
 \end{equation*}
 where $K(d,\beta)=d+\lceil d^2/2\beta\rceil+4$.
 On the other hand, by \eqref{p(t)<} with $M=0$, 
 $$
 \int_{|t|\le |\vu|^{1-\beta/d}}
  |p(t)|dt \lappr |\vu|^{1-\beta/d}\,,
 $$
 from which we obtain
 \begin{equation}\label{t<u}
 \int_{|t|\le |\vu|^{1-\beta/d}}dt\int_{\R^{d}}d\va \,|p(t)| |I_{\va, t}| {\bf 1}_{{S_R}^c}(\va,t) \lappr_{A,d{ ,m}}
 |\vu|^{-d/2+1} \| w\|_{0,K(d,\beta)}\,. 
 \end{equation}
 
 Putting together \eqref{t>u} and \eqref{t<u}  we get the assertion. 
 \qed

\subsubsection{End of the proof }  
 In order to complete the proof of Lemma~\ref{l:22} we combine 
 Lemmas~\ref{l:good_set} and \ref{l:bad_set} to get that 
 $$
 |\tilde I_q(\vc)|\lappr_{A,d,N{ ,m}} \left(\frac Lq R^{-N} +R^{d} |\vu|^{-d/2+1+\beta}\right)\left(
 \|w\|_{N,d+5}+\|w\|_{0,K(d,\beta)}\right)\,.
 $$
 We fix here $\beta = {\gamma_1}/2$, $R= |\vu|^{\frac{{\gamma_1}}{2d}}\le|\vu|^{\frac{1}{3}}$ 
 and pick $N= \lceil\tfrac{d^2}{{\gamma_1}}\rceil-2d>0$  (notice that $R>2\|A\|\sqrt{d}$ if
 $\alpha$ is small enough). 
 Then $K(d,\beta)=N+3d+4$, $R^{-N}\leq|\vu|^{-d/2+\gamma_1}\leq (L/q)^{-d/2+\gamma_1}$ since $-d/2+\gamma_1<0$ and $|\vu|\geq L/q$. Moreover, $R^d |\vu|^{-d/2+1+\beta}=|\vu|^{-d/2+1+\gamma_1}\leq (L/q)^{-d/2+1+\gamma_1}$. 
 This concludes the proof.
 \qed
\section*{Acknowledgements}
 The authors thank Professor Heath-Brown for his advice concerning his paper.

\appendix
\section{Function $\II(t)$.}\lbl{sec:I(t)}

In this appendix we assume that $d$ is an even number, $d=2d_1$, and the quadratic form $F$ has the form $F_0$. 

\subsection{The two measures on $\Sigma_t$.}

Recall that 
\be\non
 \Sigma_t=\{z=(x,y): x\cdot y=t \}\subset \R^{\dd}_x\times \R^{\dd}_y =\R^{2\dd}_z
\ee
 is a smooth manifold if $t\ne0$, while  $\Sigma_0$ has a locus at $(0,0)$. For $t\ne0$ we denote by 
 $dz\!\mid_{\Sigma_t}$ the volume element on $\Sigma_t$. If $t = 0$, then  we fist set $dz\!\mid_{\Sigma_0}$ to be 
  the volume element on $\Sigma_0 \setminus (0,0)$ and then extend it to a Borel measure on $\Sigma_0$ which assigns zero measure to the 
   locus  $(0,0)$.

We start with  a convenient disintegration of the measure $\dtz$. Let us set 
$
\Sigma_t^x = \Sigma_t\setminus\{(x,y): x=0\}.
$
Then for each  $t$ the mapping
\be\non
\pi_t^x: \Sigma_t^x \to \R^{\dd}\setminus\{0\}, \quad (x,y) \mapsto x,
\ee
is a  smooth affine euclidean  vector--bundle. For $t\ne0$ the set $ \Sigma_t^x $ equals  $\Sigma^x$, but not for $t=0$, and the quadric $\Sigma_0$ is
important for what follows.  Denote by $\sigma_t(x)$ the fibers of  $\pi_t^x$,
$
\sigma_t(x) = (\pi_t^x)^{-1}(x).
$
Then 
$$
\sigma_t(x) =(x,x^\perp + tx |x|^{-2}) \qquad \forall\, x\in \R^{\dd} \setminus \{0\},\;\; \forall\, t,
$$
where $x^\perp$ stands for the orthogonal complement to $x$ in $\R^{\dd}$.


\begin{proposition}\label{t_disintegr}  For any $t$ the 
 measure  $ dz\!\mid_{\Sigma_t} $, restricted to $\Sigma_t^x$, 
  disintegrates as follows:
\be\label{des1}
dz\!\mid_{\Sigma_t^x} = |x|^{-1} dx  |z| \,\dtx y,
\ee
where $\dtx y$ is the 
volume element on the  affine hyper space $\sigma_t(x)$.
\end{proposition}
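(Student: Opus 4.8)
The plan is to regard the projection $\pi_t^x\colon\Sigma_t^x\to\R^{\dd}\setminus\{0\}$ as a Riemannian submersion --- its fibres being exactly the affine hyperplanes $\sigma_t(x)$, as recorded above --- and to apply the coarea formula to it in the form
$$\int_{\Sigma_t^x} g \, dz\!\mid_{\Sigma_t} \;=\; \int_{\R^{\dd}\setminus\{0\}}\Big(\int_{\sigma_t(x)}\frac{g(x,y)}{\mathcal J(x,y)}\,\dtx y\Big)\,dx ,$$
where $dx$ is Lebesgue measure on the base, $\dtx y$ is the volume element of $\sigma_t(x)$ (which equals the Riemannian volume of the fibre induced from $\Sigma_t$, since $\{x\}\times\R^{\dd}$ is isometrically embedded in $\R^{2\dd}$ and contains $\sigma_t(x)$), and $\mathcal J(x,y)$ is the normal Jacobian of $\pi_t^x$ at $z=(x,y)$, i.e. the absolute value of the determinant of the differential $d\pi^x_t$ at $z$ restricted to the orthogonal complement of its kernel inside $T_z\Sigma_t$. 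Then \eqref{des1} is equivalent to the pointwise identity $\mathcal J(x,y)^{-1}=|z|/|x|$.

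To verify this I would compute $\mathcal J$ directly. First, $T_z\Sigma_t=\{(\dot x,\dot y)\in\R^{2\dd}:\dot x\cdot y+x\cdot\dot y=0\}$, whose unit normal in $\R^{2\dd}$ is $(y,x)/|z|$; and $\ker d\pi^x_t|_z=\{0\}\times x^\perp=T_z\sigma_t(x)$, as it should be. A vector $(\dot x,\dot y)\in T_z\Sigma_t$ is orthogonal to this kernel iff $\dot y\in\R x$, and then the constraint $\dot x\cdot y+x\cdot\dot y=0$ forces $\dot y=-|x|^{-2}(\dot x\cdot y)\,x$, with $\dot x\in\R^{\dd}$ arbitrary. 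Hence the restriction of $d\pi^x_t|_z$ to the orthogonal complement of its kernel is the inverse of the linear injection $\iota\colon\R^{\dd}\to\R^{2\dd}$, $\iota(\dot x)=\big(\dot x,\,-|x|^{-2}(\dot x\cdot y)\,x\big)$, so $\mathcal J(x,y)=\det(\iota^{*}\iota)^{-1/2}$. Since $|\iota(\dot x)|^{2}=|\dot x|^{2}+|x|^{-2}(\dot x\cdot y)^{2}$, the Gram matrix of $\iota$ equals the rank-one perturbation $\mathrm{Id}+|x|^{-2}\,y\,y^{\mathsf T}$, whose determinant is $1+|y|^{2}/|x|^{2}=|z|^{2}/|x|^{2}$; therefore $\mathcal J(x,y)=|x|/|z|$, as wanted. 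Equivalently, and avoiding the submersion version of the coarea formula, one may argue locally: after a rotation making $x_0=(r_0,0,\dots,0)$, the quadric $\Sigma_t$ is near $z$ the graph $y_1=(t-\sum_{j\ge2}x_jy_j)/x_1$ over the coordinates $(x,y_2,\dots,y_{\dd})$, and computing the graph volume element $\sqrt{1+|\nabla\varphi|^2}\,dx\,dy_2\cdots dy_{\dd}$ together with the volume element of the affine hyperplane $\sigma_t(x)$ in the variables $(y_2,\dots,y_{\dd})$ reproduces the factor $|z|/|x|$, the $x_1$'s cancelling.

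Substituting $\mathcal J(x,y)^{-1}=|z|/|x|$ into the coarea identity gives $dz\!\mid_{\Sigma_t^x}=|x|^{-1}|z|\,dx\,\dtx y$, i.e. \eqref{des1}. I expect no genuine obstacle: the only computation is the rank-one determinant above. Two points deserve a word of care. First, $\pi^x_t$ is a submersion precisely on $\Sigma_t^x$, so for $t=0$ one must first discard $\{x=0\}=\{0\}\times\R^{\dd}$; this is a $dz\!\mid_{\Sigma_0}$-null set since $\dd<2\dd-1$ (recall $d>4$), which is why the assertion concerns only the restriction of $dz\!\mid_{\Sigma_t}$ to $\Sigma_t^x$. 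Second, the identification of the fibre measure in the coarea formula with $\dtx y$ uses that $\sigma_t(x)$ sits inside the totally geodesic (indeed flat) submanifold $\{x\}\times\R^{\dd}$ of $\R^{2\dd}$, so that the metric induced on $\sigma_t(x)$ from $\Sigma_t$ coincides with its Euclidean metric.
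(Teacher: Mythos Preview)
Your argument is correct. The computation of the normal Jacobian is clean: the orthogonal complement of $\ker d\pi_t^x$ in $T_z\Sigma_t$ is indeed parametrised by $\iota(\dot x)=(\dot x,-|x|^{-2}(\dot x\cdot y)\,x)$, the Gram matrix $\iota^{*}\iota=\mathrm{Id}+|x|^{-2}yy^{\mathsf T}$ has determinant $|z|^{2}/|x|^{2}$ by the rank-one update formula, and so $\mathcal J=|x|/|z|$. The identification of the fibre measure with the Euclidean volume on $\sigma_t(x)$ is justified exactly as you say, and the remaining points (submersion only on $\Sigma_t^x$, null set at $t=0$) are handled adequately.

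The paper does not invoke the submersion form of the coarea formula. Instead it works entirely in local coordinates: it covers $\Sigma_t^x$ by sets $U_{x'}\times\R^{\dd}$, writes $\Sigma_t$ over each as the graph $y_1=(t-\bar x\cdot\bar\eta)/x_1$, reads off the graph volume element $\sqrt{1+|\nabla\phi_t|^2}\,dx\,d\bar\eta$, then changes variables from $\bar\eta$ to $y\in\sigma_t(x)$ to extract a density $p_t(z)$, and finally evaluates $p_t$ at an arbitrary point by choosing the frame with $e_1=x_*/|x_*|$ (so that the Jacobian of the affine map $\Phi_t^{x_*}$ becomes $1$), obtaining $p_t=|z|/|x|$. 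Your ``equivalently'' paragraph is precisely this argument in outline. The trade-off: your primary route is shorter and coordinate-free, isolating the single rank-one determinant as the whole content of the formula; the paper's route is more elementary in that it avoids quoting the Riemannian coarea formula and makes the uniqueness of the density $p_t$ explicit, at the cost of carrying the local chart through the computation.
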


We recall that equality \eqref{des1} means that for any bounded continuous function  $f$   on $\Sigma^x_t$  
\be\label{recall}
\int_{\Sigma^x_t} f(z)\dtz =\int_{\R^{\dd}\setminus \{0\}}  |x|^{-1}   dx\int_{\sigma_t(x)}  f(z) \, |z|\,\dtx y. 
\ee

\begin{proof}
The argument below follows the proof of Theorem 3.6 in \cite{DK1}. 
It suffices to verify \eqref{recall} for all continuous functions $f$, supported by a compact set $K$, for any
 $K\Subset  \Sigma_t \cap  \big
 (\R^{\dd}\setminus\{0\}) \times \R^{\dd} \big)$. 
For $x'\in \R^{\dd}\setminus\{0\}$  we denote $r'=|x'|>0$ and set
$
U_{x'} = \{x: |x-x' |<\tfrac12 r'\}$.
Since any $K$ as above can be covered by a finite system of domains $U_{x'} \times \R^{\dd}_y$, it suffices to prove \eqref{recall} for 
 any set 
  $U_{x'} \times \R^{\dd}_y =: U\subset \Sigma_t^x$ and any 
   $f\in C_0(U \cap\Sigma_t^x)$, where $C_0(\mathcal O)$ stands for  the space of continuous compactly supported functions on $\mathcal O$.  

Now we construct explicitly a trivialisation of the linear bundle $\pi_t^x$ over $U_{x'} $. To do this we fix in $\R^{\dd}$ a coordinate system,
corresponding to a frame $(e_1, \dots, e_d)$ such that the ray $\R_+ e_1$ intersects $U_{x'} $. Then 
\be\non
x_1\ge\kappa>0\quad\text{for any}\quad x=(x_1,x_2,\dots,x_d) =: (x_1, \bar x)\in  U_{x'} .
\ee
Next we construct 
   a linear in the  second argument $\bar\eta$
coordinate mapping 
$
\Phi_t: U_{x'}\times \R^{\dd-1} \to  (\pi_t^x)^{-1}U \cap  \Sigma^x_t = U \cap  \Sigma^x_t 
$
of the form
 $$
 \Phi_t(x, \bar\eta) = \big(x,  \Phi_t^x(\bar\eta)\big), 
 \qquad  \Phi_t^x(\bar\eta) =  ({\phi_t}(x, \bar\eta), \bar\eta).
 $$
 The function $\phi_t$  should be  such  that $\Phi_t^x(\bar\eta)\in \sigma_t(x)$. That is, it should satisfy
 $x\cdot \Phi_t^x(
 \bar \eta)= x_1 {\phi_t} + \bar x\cdot\bar\eta=t$.
 From here we find that 
 $\ 
 {\phi_t} = \frac{t-\bar x \cdot\bar\eta}{x_1} 
 $. 
 Thus obtained mapping $\Phi^x_t$ is affine in $\bar\eta$, and the image of $\Phi_t$  equals  $U \cap  \Sigma^x_t $.
 So $\Phi_t$ provides the required trivialisation of $\pi_t^x$ over $U \cap  \Sigma^x_t $. 
 
In the coordinates $(x, \bar\eta) \in U_{x'}\times\R^{\dd-1}$ the hypersurface $\Sigma^x_t$ is embedded in $\R^{2\dd}_z$ as a graph of the function $(x, \bar\eta) \mapsto \phi_t $. 
Accordingly  in these  coordinates 
  the volume element $\dtz$ on $\Sigma^x_t$ reeds $\dtz =\bar p_t(x,\bar\eta) dx\,d\bar\eta ,$ 
where
$$
 \bar p_t(x,\bar\eta)= \big(1+|\nabla {\phi_t} (x,\bar\eta) |^2\big)^{1/2}=
\Big( 1+ x_1^{-2}\big( x_1^{-2}(t-\bar x\cdot \bar\eta)^2 +|\bar\eta|^2 + | \bar x|^2 \big)
\Big)^{1/2} .
$$
So 
$$
\int_{U \cap  \Sigma^x_t } f(z) \dtz  = \int_{U_{x'} } \Big( \int_{\R^{\dd-1}} f(x, \Phi_t^x(\bar\eta)) \,\bar p_t(x, \bar\eta)\,d\bar\eta\Big)dx. 
$$
Passing from the variable $\bar\eta$ to $y= \Phi_t^x(\bar\eta)\in \sigma_t(x)$ we write the measure $\bar p_t(x, \bar\eta)d\bar\eta $ as
$p_t(z) d_{\sigma_t(x)} y$ with 
\be\non
 p_t(x,y)=\bar p_t(x, (\Phi_t^x)^{-1}y) |\det \Phi_t^x|^{-1}
\ee
(determinant of an affine map is understood as that of its linear part, mapping $\R_{\bar\eta}^{\dd-1}$ to $\sigma_t(x) = \R^{\dd-1}$).  Then 
\be\label{rela}
\int_{U \cap  \Sigma^x_t } f(z) \dtz  =  \int_{U_{x'}} \Big( \int_{ \sigma_t(x)} f(z) p_t(z) {\dtx} y \Big) dx.
\ee
The smooth function $p_t$ in the integral above is defined on $U \cap \Sigma^x_t$ 
in a unique way and does not depend on the trivialisation of $\Pi$ over 
${U_{x'}}$, used to obtain it. Indeed, if $p_1(z)$ is another continuous  function on $U \cap \Sigma^x_t$
 such that \eqref{rela} holds with $p_t:= p_1$, 
 then 
$\ 
\int dx  \int_{x^{\perp}} f(z) (p_t(z)-p_1(z))  {\dtx} y =0$ for any
$f\in C_0(U \cap  \Sigma^x_t ),$
which obviously implies that $p_1=p_t$.

 To establish \eqref{recall} it remains to verify that 
in \eqref{rela}
\be\label{remains}
p(x_*, y_*) = |x_*|^{-1} |z_*| \qquad \forall\, z_*= (x_*, y_*) \in U\cap\Sigma_t. 
\ee
To prove this equality 
let us choose in $\R^{\dd}$ the euclidean coordinates, corresponding to a frame  with the first basis vector 
 $e_1= x_*/|x_*|$. Then  $x_*=( |x_*|,0)$ and $y_*=(t/ |x_*|,\bar y_*)$
  for some  $\bar y_*\in\R^{\dd-1}$. Repeating the calculation above in this coordinate system we readily 
   see that 
   $
   \Phi_t^{x_*} (\bar\eta) = (t/|x_{*}| , \bar\eta).
   $
   So 
   $
   \phi_t(x_*, \bar\eta) = t/|x_*|, 
   $
   and
   $$
   \bar p_t(x_*, \bar\eta) = |x_*|^{-1} \big( |x_*|^2 + t^2 |x_*|^{-2} + |\bar\eta|^2\big)^{1/2}, \quad \bar\eta \in \R^{\dd-1}. 
   $$
   Since
   $
    \Phi_t^{x_*} (\bar\eta)  = (t/|x_*|, \bar\eta) \in \sigma_t(x_*),
   $
   then  $\det \Phi_t^{x_*}=1$. Thus  we have  that 
$$
\bar p_t(x_*, {\bar y_*}) =  p_t(x_*,   y_*) =
 | x_*|^{-1} ( | x_*|^2+ | y_*|^2)^{1/2} = |x_*|^{-1} |z_*|, 
$$
and \eqref{remains} is obtained.  This proves \eqref{des1}.
\end{proof}

Considering the projection $\pi^y_t: \Sigma_t\ni
(x,y)\mapsto y$ instead of $\pi_t^x$  we see that the volume element 
$\dtz$, restricted to  the domain $\Sigma^y_t=  \{(x,y)\in\Sigma_t: y\ne0\}$, disintegrates as
\be\label{Pii}
dz \mid_{\Sigma^y_t}= 
 |y|^{-1}dy\, |z| d_{(\pi^y_t)^{-1}y} x, \quad y\in \R^{\dd}\setminus\{0\}.
\ee
Since $\Sigma^y_t \cap \Sigma^w_t $ is the empty set if $t\ne0$ and is $(0,0)$ if $t=0$, then from here we obtain that 
\be\label{full_vol}
\text{
 $\Sigma_t^x$ has the full volume in $\Sigma_t$. }
 \ee
 As $d\ge2$, then  by \eqref{des1} the function $|z|^{-1}$ is locally integrable on $\Sigma_t^x$; so also on 
  $\Sigma_t$. Accordingly, the measure 
 $d\mu_t = |z|^{-1} dz\mid_{\Sigma_t}$ is well defined and  is  equivalent to $\dtz$. Using  \eqref{des1} and \eqref{Pii}
  we get:

\begin{corollary}\label{cor1}
For any $t$ the  measure   $\mu_t$    disintegrates as 
\be\label{desSigma}
(\mu_t\!\mid_{\Sigma_t^x})  (dz) = |x|^{-1}  dx\,\dtx y,
\ee
and as 
\be\label{desSigma1}
(\mu_t\!\mid_{\Sigma_t^y})  (dz) = |y|^{-1}  dy \, d_{(\pi^y_t)^{-1}y} x.
\ee
\end{corollary}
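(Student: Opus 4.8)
The plan is to obtain Corollary~\ref{cor1} by simply dividing the disintegration of $\dtz$ furnished by Proposition~\ref{t_disintegr} by the weight $|z|$. Concretely, I would multiply both sides of \eqref{des1} by $|z|^{-1}$; since $|z|^{-1}\cdot|z|=1$ the volume factor cancels and one reads off
$$
(\mu_t\!\mid_{\Sigma_t^x})(dz) = |z|^{-1}\,dz\!\mid_{\Sigma_t^x} = |x|^{-1}\,dx\,\dtx y,
$$
which is exactly \eqref{desSigma}. Before doing this I would recall, as is done in the text just preceding the corollary, that $|z|^{-1}$ is locally integrable on $\Sigma_t^x$ — this follows from \eqref{des1} together with $d\ge2$, the only singularity being at $z=0$ and occurring solely when $t=0$ — so that $\mu_t=|z|^{-1}\,dz\!\mid_{\Sigma_t}$ is a genuine Borel measure on $\Sigma_t$, equivalent to $\dtz$, and the identity above is meaningful.

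For the second formula \eqref{desSigma1} I would run the same one-line computation with the projection $\pi_t^y$ in place of $\pi_t^x$: namely, multiply the disintegration \eqref{Pii} of $\dtz$ over $\Sigma_t^y$ by $|z|^{-1}$, which again cancels the $|z|$ and yields $(\mu_t\!\mid_{\Sigma_t^y})(dz)=|y|^{-1}\,dy\,d_{(\pi^y_t)^{-1}y}x$. Finally I would remark that, by \eqref{full_vol}, $\Sigma_t^x$ (and symmetrically $\Sigma_t^y$) carries the full $\dtz$-measure, hence also the full $\mu_t$-measure, so that the two displayed disintegrations in fact describe $\mu_t$ on all of $\Sigma_t$ up to a set of measure zero.

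There is essentially no obstacle here: the corollary is a purely formal consequence of Proposition~\ref{t_disintegr} and of formula \eqref{Pii}, and the only point that deserves a word — the local integrability of $|z|^{-1}$, needed for $\mu_t$ to be well defined — has already been settled in the paragraph that states the corollary.
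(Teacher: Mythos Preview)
Your proposal is correct and matches the paper's own treatment exactly: the paper states the corollary immediately after noting that $|z|^{-1}$ is locally integrable on $\Sigma_t$ and then simply says ``Using \eqref{des1} and \eqref{Pii} we get:'', i.e.\ the corollary is obtained precisely by dividing the disintegrations \eqref{des1} and \eqref{Pii} by $|z|$, just as you describe.
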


\subsection{Integrating over $\Sigma_t$.}

 	  Using the embedding $\Sigma_t \to \R^{2\dd}$  we regard $\mu_t$ both as an atomless  Borel measure on $\Sigma_t$ and on 
 $\R^{2\dd}$. Now our goal is to study integrals 
\be\label{I_int}
\II(t)=\II(t;f):=
\int_{\Sigma_t} f(z) \, \mu_t(dz), 
\ee
where $f$ is a ${k_f}$--smooth  function on $\R^{2\dd}$, $0\le k_f \le\infty$, decaying at infinity. 
  Due to \eqref{full_vol},  in \eqref{I_int} we may replace the integrating over 
 $\Sigma_t$ by that over $ \Sigma_t^x$ or over $ \Sigma_t^y$. So we may use the disintegrations \eqref{desSigma} and
 \eqref{desSigma1} to study $\II(t)$. To do this, note that for any $t$ the mapping 
 $$
 L_t= \Sigma_0^x \to \Sigma_t^x, \qquad (x,y) \mapsto (x, y+t|x|^{-2} x), 
 $$
 defines an affine isomorphism of the bundles $\pi_0$ and $\pi_t$, which we will also denote as $L_t$, and that $L_t$ 
  preserve the volume of the fibers. 
 So due to \eqref{desSigma} the mapping $L_t$ sends  the measure $\mu_0$ to $\mu_t$, and 
 \be\label{I_in}
 \II(t;f) = \int_{\Sigma_0} f(L_t(z)) \, \mu_0(dz) = \int_{\R^{\dd}} |x|^{-1} dx \int_{x^\perp} f(x, y+t|x|^{-2} x) d_{x^\perp} y,
 \ee
 since $\sigma_0(x)= x^\perp$. This equality suggests that the main difficulty to study $\II(t)$ comes from the integrating over small $x$'s. To separate it 
  from the effects, coming from the integrating over the vicinity of infinity we will split function $f$
 in a some of three functions. Firstly, taking any smooth function $\phi\ge0$ on $\R$ which vanishes for $|t|\ge2$ and equals one 
  for $|t|\le1$,  we write
 $$
 f = f_{00} + f_1, \qquad f_{00} = \phi(|z|^2) f. 
 $$
 Denoting $B_r(\R^n) =\{x\in \R^n: |x|\le r\}$ and  $B^r(\R^n) =\{x\in \R^n: |x|\ge r\}$ we see that 
 $\supp f_{00} \subset B_{\sqrt2}(\R^{2\dd})$ and $\supp f_1  \subset B^{1}(\R^{2\dd})$. So
 $$
 (x,y) \in \supp f_1 \ \Rightarrow\  |x| > 1/\sqrt2\;\  \text{or} \;\  |y| \ge 1/\sqrt2.
 $$
Then, setting 
$
f_{10} = f_1(z) \phi(4|x|^2)$ and $ f_{11}= f_1(z) ( 1-\phi(4|x|^2) ), 
$
we get that 
$\ 
f =   f_{00} + f_{10}+  f_{11},
$
where 
\be\label{supp}
\begin{split}
&\supp f_{11} \subset B^{1/2}( \R_x^{\dd})\times \R^{\dd}_y, \\
 &\supp f_{10} \subset  \Big(\big( B_{1/\sqrt2}(\R^{\dd}_x)\times \R^{\dd}_y \big) \cap \supp f_1\Big) \ \subset \R^{\dd}_x \times  B^{1/\sqrt2} (\R^{\dd}_y) . 
 \end{split}
\ee
We denote
$
\II_{ij}(t) = \II(t; f_{ij}).
$
Then $\II(t) = \II_{00}(t) + \II_{10}(t) + \II_{11}(t) $, and we will estimate the three integrals in the r.h.s.

For a function $F\in C^k(\R^N)$, $n_1 \in \NN, n_1\le k$  and $n_2\in \R$ 
 we set 
$
\| F\|_{n_1, n_2}  = \sup_{r} \max_{|\alpha|\le n_1} |\p_p^\alpha F(r)| \lan z\ran^{n_2}.
$
Assume that $ \| f\|_{k_f, n} <\infty$ for some $n\ge0$. Then obviously,
$$
\| f_{00} \|_{k_f, n}  \le C_{n} \| f\|_{k_f, 0}  \qquad 
\| f_{1j} \|_{k_f, n}  \le C_{k_f, n} \| f\|_{k_f, n}, \;\;  j=0,1.
$$
\smallskip

We start with the integral $ \II_{00}(t) $. It follows immediately from \eqref{desSigma} that this is a continuous function. 
Since for any $(x,y) \in \supp f_{00}$ we have $|(x,y)| \le \sqrt2$, then
$
| x\cdot y| \le |x| |y| \le\tfrac12 (|x|^2 +|y|^2) \le 1.
$
So  $ \II_{00}(t) $ vanishes for $|t|\ge1$. Now let $|t|<1$.  From \eqref{I_in} we get that 
\be\label{I1}
\begin{split}
\p^k \II_{00} (t) &= \int_{\R^{\dd}} |x|^{-1} dx \int_{x^\perp}(d^k/dt^k) f_{00}(x, y+t|x|^{-2} x)\, d_{x^\perp} y \\
&= \int_{\R^{\dd}} |x|^{-1} dx \int_{x^\perp} d_y^k f_{00}(x, y+t|x|^{-2} x) (|x|^{-2} x) \, d_{x^\perp} y .
\end{split}
\ee
Since $y\in x^\perp$, then 
$
|(x,  y+t|x|^{-2} x)|^2 = |x|^2 +|y|^2 +t^2 |x|^{-2}.
$
So the integrand of the internal integral is supported by the ball
$$
Q_t(|x|) =\{ y \in x^\perp : |y|^2 \le h_t(|x|^2), \qquad h_t(\rho) = 2-\rho- t^2\rho^{-1} \le2\;\; \text{for} \;\; \rho>0.
$$
The function $h_t$ of argument $\rho>0$  is  concave. For  $|t|<1$ it  has two positive zeroes 
$$
\rho_{1,2}(t) = 1 \pm \sqrt{1-t^2}, \qquad 0<\rho_1 = t^2/2 +O(t^4) < \rho_2 =2- t^2/2  +O(t^4) <2
$$
and is positive between them. Since $h_t\le2$, then 
$
\text{Vol} \, Q_t(|x|) \le C_{\dd-1} 2^{(\dd-1)/2}$ $= C'. 
$
The internal integral in \eqref{I1} is non-zero only when $h$ is positive, i.e. when
\be\label{hr}
\rho_1 < |x|^2 <\rho_2. 
\ee

Since $\rho_1>0$ for $t\ne0$, then we see from \eqref{I1} that  $\II_{00} $
is as smooth a function of $t\ne0$ as $f_{00}$ is; so  $\II_{00}\!\mid_{\R\setminus \{0\}}  \in C^{k_f}$. To study the behaviour of $\II_{00} $ at zero we note 
that since $ \|d_y^k f_{00} \| \le C_k \| f\|_{k,0}$ and $\text{Vol} \, Q_t(|x|) \le C'$, then by \eqref{I1}  and  \eqref{hr} 
$$
|\p^k \II_{00} (t) |\le C_k \| f\|_{k,0} \int_{\sqrt{\rho_1}}^{\sqrt{\rho_2}} r^{-1+\dd-1-k} \,dr.
$$ 
As ${\sqrt{\rho_1}} \ge c|t|$ and $\rho_2<2$, then 
\be\label{I3}
|\p^k \II_{00} (t)| \le C_k \| f\|_{k,0} \int_{c|t| }^{\sqrt{2}} r^{\dd-2-k} \le C_k \| f\|_{k,0} (1+ |t|^{\dd-k-1}), 
\ee
where we make an agreement that 
$
|t| ^0 := \max( \ln |t|^{-1}, 1). 
$

Naturally  estimate \eqref{I3}  remains true for $ \II (t; f)$ if $f$ is a $C^k$-function with a compact support. In general it cannot be improved for $C^k_0$-functions:


\begin{example}
Let $f$ be supported by the ball $B_1(\R^{2\dd})$, so that $f_{00} =f$. Further on, let $f= F(|x|^2) g(|y|^2)$, where $F$ and $g$ are non-negative 
$k_f$-smooth functions, supported by $ [-1/2, 1/2]$, and such that 
$0 \notin \supp g$.  Then
$$
f_t(x,y):= F(|x|^2) g(|y+t x|x|^{-2}|^2) = F(|x|^2) g(|y|^2 + t^2 |x|^{-2}). 
$$
So
$
\p_t f_t = F(|x|^2) g' (|y|^2 + t^2 |x|^{-2}) 2t |x|^{-2},
$
and 
$$
\p^k_t f_t = F(|x|^2)  \sum_{l=1} ^k C_l  g^{(l)}  (|y|^2 + t^2 |x|^{-2}) t^l |x|^{-l-k}, 
$$
where the coefficients $C_l$ are non-negative and some of them are zero.  Then the internal integral in \eqref{I1} equals 
$$
F(|x|^2)   \sum_{l=1} ^k C_l |t|^l |x|^{-l-k} J_l(|x|^2), \quad J_l (|x|^2) = \int  g^{(l)}  (|y|^2 + t^2 |x|^{-2}) d_{x^\perp} y.
$$
Now  we see from \eqref{I1} that 
$\p^k \II(t;f) $ equals to the sum in $l$ of the integrals 
$$
\Upsilon_l:=
C_l |t|^l \int_{c|t|} ^2 r^{-1+\dd-1-l-k} F(r^2) J_l(r^2)\, dr \sim \pm (1+ |t|^{\dd-k-1}).
$$
For generic $F$ and $g$ the numbers $\Upsilon_l$, corresponding to non-zero $C_l$, 
 do not vanish and do not cancel each other (see below), so in general case
  estimate \eqref{I3} cannot be improved.
\medskip

\noindent {\it Sub-example}. Let $d\ne3$ and $k=1$, so also $l=1$ (if $d=3$ we may consider $k=2$ and argue as below).
 Denoting $A= t^2 |x|^{-2}$ we have
\[
\begin{split}
J_1(|x|^2)& = C_{\dd-1} \int_0^\infty R^{\dd-2} g'( R^2 +A) \, dR\\
&= \frac1{2 C_{\dd-1}} \int_0^\infty R^{\dd-3} \frac{d}{dR}
 g( R^2 +A) \,dR
 \\&
  =- \frac{C_{\dd-1}(\dd-3)}{2}   \int_0^\infty R^{\dd-4}   g( R^2 +A) \,dR,
\end{split}
\]
which is $>0$ or $<0$, depending on $d$. Accordingly the integral in the expression for $\Upsilon_1$  does not vanish. 
\end{example} 
\bigskip

It remains to consider functions $\II_{11}$ and $\II_{10}$. Let us start with $\II_{11}$, assuming that
\be\label{cond}
n>2\dd-2. 
\ee
 In view of \eqref{supp},  $\p^k\II_{11}$ may be written in the 
form \eqref{I1} with $f_{00}$ replaced by $f_{11}$, where the integrating in $dx$ is taken over $B^{1/2} (\R^{\dd})$.  So $|x|\ge 1/2$ everywhere on the 
support of the integrand, and the function $\II_{11}$ is $k_f$-smooth. Since in the integral 
$$
\| d_y^k f_{11}\| \le C( 1+ |x|^2 +|y|^2 +t^2 |x|^{-2} )^{-n/2} \| f\|_{k,n} 
$$
and $|x| \ge 1/2$, then 
\be\label{I5}
\begin{split}
|\p^k \II_{11} (t&) | \le C  \| f\|_{k,n}   \int_{|x|\ge1/2} |x|^{-1-k} dx \int_{x^\perp}\! ( 1+ |x|^2 +|y|^2 +t^2 |x|^{-2} )^{-\frac{n}2}\, d_{x^\perp} y \\
& \le C'  \| f\|_{k,n} \int_{1/2}^\infty r^{-1-k +\dd-1}\,dr   \int_0^\infty\!( 1+ r^2  +t^2 r^{-2} +\rho^2 )^{-\frac{n}2} \rho^{\dd-2}\,  d\rho .
\end{split}
\ee
Denoting $1+r^2 +t^2 r^{-2} =A^2$ we write the internal integral as 
$$
\int_0^\infty( A^2+\rho^2 )^{-\frac{n}2} \rho^{\dd-2}\,  d\rho = A^{-n- 1 +d_1} \! \int_0^\infty\!( 1+|x|^2 )^{-\frac{n}2} x^{\dd-2}\,  dx= C_n  A^{-n-1 +d_1} 
$$
(we recall \eqref{cond}). So
$$
|\p^k \II_{11} (t)| \le   C  \| f\|_{k,n} \int_{1/2}^\infty r^{\dd-k-2 } (1+r^2 +t^2 r^{-2})^{-(n+1-d_1)/2}  \,dr   .
$$
For $|t|\le 1$ the integral in the r.h.s. obviously is bounded by a $t$-independent constant. Now let  $|t|\ge 1$.
Since by Young's inequality 
$$
(A+B)^{-1} \le C_a^{-1} A^{-a} B^{a-1}, \quad 0<a<1,
$$
for any $A,B>0$, and as  $r\ge1/2$, then 
$$
(1+r^2 + t^2 r^{-2} )^{-1} \le C'_a  r^{-2a} t^{2(a-1)} r^{-2(a-1)} = C'_a r^{-2(2a-1)} |t|^{2(a-1)}. 
$$
Denoting $2a-1 =b \in (-1, 1)$ we get that 
$$
|\p^k \II_{11} (t)| \le   C_{k,a}  \| f\|_{k,n} |t|^{-\frac{1-b}2 (n+1-d_1)} 
 \int_{1/2}^\infty r^{\dd-k-2 -b(n+1-d_1) }   \,dr   .
$$
Consider
$
b_* =\frac{\dd-k-1}{n+1-\dd} <1. 
$
Taking any $b\in (b_*,1)$ we achieve that the exponent for $r$ in the integral above is $<-1$, so the integral converges. Denote
$
\kappa(b) = \frac{1-b}2 (n+1-\dd). 
$
Then $\kappa(b_*) = \frac12 (n+k-2(\dd-1))>0$ and any $0< \kappa <\kappa(b_*)$ equals  $\kappa(b)$ for some $b\in (b_*,1)$.

We have seen that  for every $0<\kappa < \kappa(b_*)$,
\be\label{I6}
|\p^k \II_{11} (t)| \le   C(k,n, \kappa)   \| f\|_{k,n} \lan t \ran^{-\kappa} \quad \forall\, t.
\ee

For integral $ \II_{10}$ we use  disintegration \eqref{desSigma1} to get an analogy of estimate \eqref{I5} with $x$ and $y$ swapped. Since 
by \eqref{supp} 
$|y|\ge1/\sqrt2$ for $(x,y)$ in the support of $f_{10}$,  then repeating the argument above we get that 
 $ \p^k \II_{10} $ also satisfies estimate 
 \eqref{I6} with a modified constant $C_k$. 
We have proved 

\begin{proposition}\label{p_2}
Assume that $ \| f\|_{k,n}<\infty$ for some $k\in \NN$ and that  $n>2\dd-2$. Then for $0<|t| \le 1$ 
$$
| \p^k \II (t;f)| \le   C_k  \| f\|_{k,n} (1+ |t|^{\dd-k-1})
$$
(we recall the notation $|t|^0 := \max( \ln|t|^{-1},1)$), 
while for $|t| \ge1$ 
$$
| \p^k \II (t;f)| \le   C  \| f\|_{k,n}  | t |^{-\kappa}
$$
for every $\kappa< \tfrac12 (n+k-2(\dd-1))$ with a suitable $C$, depending on $k, n$, $\kappa$.
\end{proposition}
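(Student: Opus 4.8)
The plan is to read off the estimate from the decomposition $f = f_{00} + f_{10} + f_{11}$ constructed above, using $\II(t;f) = \II_{00}(t) + \II_{10}(t) + \II_{11}(t)$ together with the elementary bounds $\|f_{00}\|_{k,0} \le C_n \|f\|_{k,0}$ and $\|f_{1j}\|_{k,n} \le C_{k,n}\|f\|_{k,n}$ for $j=0,1$, and the obvious $\|f\|_{k,0} \le \|f\|_{k,n}$.

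First I would handle the range $0 < |t| \le 1$. The term $\II_{00}$ is controlled directly by \eqref{I3}, which already gives $|\p^k \II_{00}(t)| \lappr_k \|f\|_{k,0}(1 + |t|^{\dd-k-1})$ with the convention $|t|^0 := \max(\ln|t|^{-1},1)$. The terms $\II_{11}$ and $\II_{10}$ are controlled on $|t|\le 1$ by \eqref{I6} (valid because of the running hypothesis $n > 2\dd-2$), which bounds $|\p^k \II_{1j}(t)|$ by $C(k,n,\kappa)\|f\|_{k,n}\lan t\ran^{-\kappa} \le C\|f\|_{k,n}$; since $1 + |t|^{\dd-k-1} \ge 1$, this contribution is absorbed into the claimed envelope, and the first assertion follows after replacing $\|f_{ij}\|$ by $\|f\|_{k,n}$.

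Next the range $|t| \ge 1$: here $\II_{00}(t) = 0$, because on $\supp f_{00} \subset B_{\sqrt 2}(\R^{2\dd})$ one has $|x\cdot y| \le \tfrac12(|x|^2 + |y|^2) \le 1$ (with equality only on a null subset of $\Sigma_1$), so only $\II_{10}$ and $\II_{11}$ survive. Both satisfy \eqref{I6} for every $\kappa < \kappa(b_*) = \tfrac12(n+k-2(\dd-1))$, which is exactly the second assertion. Summing the surviving pieces finishes the proof.

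I do not expect a genuine obstacle at this last stage: the substantive work — the disintegrations of Proposition~\ref{t_disintegr} and Corollary~\ref{cor1}, the volume‑preserving affine maps $L_t$, and the one‑dimensional integral estimates producing \eqref{I3} and \eqref{I6}, the latter using \eqref{desSigma1} for the $y$‑variable in the $\II_{10}$ case — has already been carried out. The only points needing a line of care are: (i) that the $\II_{1j}$ terms genuinely fit under the $(1+|t|^{\dd-k-1})$ envelope on $|t|\le 1$, which is immediate since that envelope is $\ge 1$; and (ii) that all constants can be taken to depend only on $k$, $n$ (and $\kappa$), which follows from the displayed bounds on $\|f_{ij}\|$ in terms of $\|f\|_{k,n}$.
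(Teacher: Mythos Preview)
Your proposal is correct and is exactly the paper's approach: the proposition is stated immediately after \eqref{I3} and \eqref{I6} with the words ``We have proved'', and the combination you spell out is precisely what is intended. The only cosmetic point is that for $|t|\ge1$ you need not worry about the boundary case, since $f_{00}=\phi(|z|^2)f$ vanishes at $|z|^2=2$ already; otherwise your summary matches the paper line for line.
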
 

We have seen that for a smooth $f$, decaying at infinity, the function $ \II (t;f)$ is $(\dd-2)$-smooth, is smooth outside zero 
 and decays at infinity. But for a generic $f$ its derivative of order $d_1-1$ has at zero a logarithmic singularity.

\end{document}